\documentclass[a4paper,11pt]{article}
\pagestyle{plain}
\usepackage{braket,hyperref}
\usepackage{amssymb,mathtools,amsthm,amsmath,bm}
\usepackage{tikz}

\usepackage{xargs}
\usepackage{gensymb}

\usepackage{enumitem}
\usepackage{verbatim}

\usepackage{kbordermatrix}%

% Left delimiter
% Right delimiter
\usepackage{thm-restate}

\theoremstyle{plain}
\newtheorem{theorem}{\bf Theorem}[section]

\newtheorem{conjecture}[theorem]{\bf Conjecture}
\newtheorem{proposition}[theorem]{\bf Proposition}

\newtheorem{lemma}[theorem]{\bf Lemma}
\theoremstyle{definition}

\newenvironment{remark}[1][Remark.]{\begin{trivlist}
		\item[\hskip \labelsep {\bfseries #1}]}{\end{trivlist}}

%\numberwithin{theorem}{section}
%\numberwithin{equation}{section}
%\numberwithin{remark}{section}
%\numberwithin{example}{section}

\newcommand{\Rea}{{\mathbb R}}

\DeclareMathOperator{\rank}{\text{rank}}

\DeclareMathOperator{\supp}{\text{supp}}

\newcommand{\lfrac}[2]{\left\lfloor\frac{#1}{#2}\right\rfloor}

\newcommand{\bv}{\mathbf v}

\usepackage{authblk}

	\title{On the $d$-dimensional algebraic connectivity of graphs}

\author[1]{Alan Lew\thanks{\href{mailto:alan.lew@mail.huji.ac.il}{alan.lew@mail.huji.ac.il}. Alan Lew was partially supported by the Israel Science Foundation grant ISF-2480/20.}}
\author[1]{Eran Nevo\thanks{\href{mailto:nevo@math.huji.ac.il}{nevo@math.huji.ac.il}. Eran Nevo was partially supported by the Israel Science Foundation grant ISF-2480/20.}}
\author[1]{Yuval Peled\thanks{\href{mailto:yuval.peled@mail.huji.ac.il}{yuval.peled@mail.huji.ac.il}.}}
\author[1]{Orit E. Raz\thanks{\href{mailto:oritraz@mail.huji.ac.il}{oritraz@mail.huji.ac.il}.}}

\affil[1]{Einstein Institute of Mathematics,
 Hebrew University, Jerusalem~91904, Israel}
	
	\date{}
\setcounter{Maxaffil}{0}

\begin{document}

	\maketitle
%\begin{center}
%\emph{For Nati}
%\end{center}

\begin{abstract}

The $d$-dimensional algebraic connectivity $a_d(G)$ of a graph $G=(V,E)$, introduced by Jord\'an and Tanigawa, is a quantitative measure of the $d$-dimensional rigidity of $G$ that is defined in terms of the eigenvalues of stiffness matrices (which are analogues of the graph Laplacian) associated to mappings of the vertex set $V$ into $\Rea^d$.

Here, we analyze the
$d$-dimensional algebraic connectivity of complete graphs. In particular, we show that, for $d\geq 3$, $a_d(K_{d+1})=1$, and for $n\geq 2d$,
\[
\left\lceil\frac{n}{2d}\right\rceil-2d+1\leq a_d(K_n) \leq \frac{2n}{3(d-1)}+\frac{1}{3}.
\]
\end{abstract}

\section{Introduction}

A \emph{$d$-dimensional framework} is a pair $(G,p)$ consisting of a graph $G=(V,E)$ and a mapping of its vertices $p:V\to \Rea^d$.
A framework $(G,p)$ is called \emph{rigid} if every continuous motion of the vertices that preserves the lengths of all the edges of $G$, preserves in fact the distance between every two vertices of $G$.

In \cite{AR1}, Asimov and Roth introduced the stricter notion of \emph{infinitesimal rigidity}:
For two distinct vertices $u,v\in V$, let $d_{u v}\in \Rea^d$ be defined by
\[
    d_{u v}=\begin{cases}
           \frac{p(u)-p(v)}{\|p(u)-p(v)\|} & \text{ if } p(u)\neq p(v),\\
            0 & \text{ otherwise,}
    \end{cases}
\]
and let $\bv_{u,v}\in \Rea^{d|V|}$ be defined by
\[
   \bv_{u,v} ^t= \kbordermatrix{
     & &  &  & u & & & & v & & & \\
     & 0 & \ldots & 0 & d_{u v} & 0 & \ldots & 0 & d_{v u} & 0 & \ldots & 0}.
\]
Equivalently, $     
\bv_{u,v}= (1_u-1_v)\otimes d_{u v},
$
where $\{1_u\}_{u\in V}$ is the standard basis of $\Rea^{|V|}$ and $\otimes$ denotes the Kronecker product.

The \emph{normalized rigidity matrix} of $(G,p)$, denoted by $R(G,p)$, is the $d|V|\times |E|$ matrix whose columns are the vectors $\bv_{u,v}$ for all $\{u,v\}\in E$.

Assume that $|V|>d$. 
It is known (see \cite{AR1}) that the rank of $R(G,p)$ is at most $d|V|-\binom{d+1}{2}$. The framework $(G,p)$ is called \emph{infinitesimally rigid}\footnote{The usual definition of infinitesimal rigidity is in terms of the unnormalized rigidity matrix, but both definitions are equivalent as scaling of the columns of a matrix does not change its rank.} if the rank of the $R(G,p)$ is exactly 
$d|V|-\binom{d+1}{2}$. %This rank is achieved for the complete graph $K_V$ and a generic embedding $p$.

Infinitesimal rigidity is in general stronger than rigidity, but it is known that both notions coincide for ``generic" embeddings (see \cite{AR1}). Note that for $d=1$, and assuming that $p$ is injective, both notions of rigidity coincide with the notion of graph connectivity.

Recently Jord\'an and Tanigawa~\cite{jordan2020rigidity} (building on Zhu and Hu~\cite{zhu2013quantitative,zhu2009stiffness} who considered the $2$-dimensional case) introduced the following quantitative measure of rigidity:

The \emph{stiffness matrix} $L(G,p)$ is defined by
\[
    L(G,p)=R(G,p) R(G,p)^t \in \Rea^{d|V|\times d|V|}.
\] 
It is easy to check that the rank of $L(G,p)$ equals the rank of $R(G,p)$. Therefore, the kernel of $L(G,p)$ is of dimension at least $\binom{d+1}{2}$, and equality occurs if and only if the framework $(G,p)$ is infinitesimally rigid.

Let $\lambda_i(L(G,p))$ be the $i$-th smallest eigenvalue of $L(G,p)$. The \emph{spectral gap} of  $L(G,p)$ is its minimal non-trivial eigenvalue  $\lambda_{\binom{d+1}{2}+1}(L(G,p))$.
The \emph{$d$-dimensional algebraic connectivity of $G$} is defined by \[
a_d(G)=\sup\left\{ \lambda_{\binom{d+1}{2}+1}(L(G,p)) \middle| \, p: V\to \Rea^d\right\}.
\]
For $d=1$, $a_1(G)$ is the usual algebraic connectivity (a.k.a. spectral gap) of $G$, introduced by Fiedler in \cite{fiedler1973algebraic}.
For general $d$, we always have $a_d(G) \ge 0$ (since $L(G,p)$ is positive semi-definite) and $a_d(G)>0$ if and only if
a generic embedding of $G$ in $\Rea^d$ forms a rigid framework.

Let $K_n$ be the complete graph on $n$ vertices.
In~\cite{jordan2020rigidity}, a lower bound on $a_d(K_n)$ was used to deduce an improved constant in a threshold result for $d$-rigidity~\cite{kiraly2013coherence}: there exists a constant $C_d$ such that if $pn > C_d \log n$ then a graph $G\in G(n,p)$, the Erd\H{o}s-R\'enyi $n$-vertex random graph with edge probability $p$, is asymptotically almost surely $d$-rigid. 
(For a sharp threshold for $d$-rigidity  see our recent~\cite{SharpRigidityThreshold}.)
Their estimate on $C_d$ depended on the value of the spectral gap of the stiffness matrix of the regular $d$-simplex graph $K_{d+1}$, denoted $s_d$, which was conjectured to equal $1$ for $d\geq 3$.  

Motivated by these results, we study in this paper the $d$-dimensional algebraic connectivity of complete graphs. It is well known and easy to check that $a_1(K_n)=n$.
For $d=2$, it was shown by Jord\'an and Tanigawa \cite[Theorem 4.4]{jordan2020rigidity}, based on a result by Zhu \cite{zhu2013quantitative}, that $a_2(K_n)=n/2$ for all $n\geq 3$ (see also Proposition \ref{prop:2d}). For $d\geq 3$ the situation is more complicated, and the only previously known result is the lower bound $a_d(K_n)\geq s_d d n/ (2 (d+1)^2)-d$, proved by Jord\'an and Tanigawa in \cite[Thm. 5.2]{jordan2020rigidity}\footnote{The bound as stated in \cite{jordan2020rigidity} is incorrect, the bound stated here is obtained after fixing a typo in \cite[Lemma 5.4]{jordan2020rigidity}.}.

Here, we first focus on the case $n=d+1$. Let $p^{\triangle}:V(K_{d+1})\to \Rea^d$ denote the regular simplex embedding (that is, the vertices of $K_{d+1}$ are mapped bijectively to the vertices of a regular simplex in $\Rea^d$), and denote 
\[s_d=\lambda_{\binom{d+1}{2}+1}(L(K_{d+1},p^{\triangle})).
\]
We prove the following.

\begin{restatable}{theorem}{simplexspectrum}\label{thm:simplex_spectrum}
The spectrum of $L(K_{d+1},p^{\triangle})$ is
\[
    \left\{0^{(d(d+1)/2)},1^{((d+1)(d-2)/2)},\frac{d+1}{2}^{(d)},d+1^{(1)}\right\}.
\]
\end{restatable}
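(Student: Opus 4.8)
The plan is to exploit the large symmetry group of the regular simplex. Realize $K_{d+1}$ with $p^{\triangle}(i)=e_i$, the standard basis of $\Rea^{d+1}$; all vertices then lie in the affine hyperplane $\{\sum_k x_k=1\}$, the edge directions $e_i-e_j$ all have length $\sqrt2$, and they span the $d$-dimensional space $U=\{x\in\Rea^{d+1}:\sum_k x_k=0\}$, which serves as our model of $\Rea^d$. Writing $1_i$ for the vertex basis and $e_i$ for the spatial basis, the stiffness matrix becomes
\[
L=\tfrac12\sum_{i<j}(1_i-1_j)(1_i-1_j)^t\otimes(e_i-e_j)(e_i-e_j)^t ,
\]
an operator on $\Rea^{d+1}\otimes U$. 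The crucial point is that $S_{d+1}$ acts on this space by simultaneously permuting the $1_i$ and the $e_i$, and $L$ is invariant under this diagonal action; hence $L$ commutes with the representation and preserves each of its isotypic components.

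Next I would carry out the isotypic decomposition. As an $S_{d+1}$-module $\Rea^{d+1}=\mathbf 1\oplus U$ (trivial plus standard), so $\Rea^{d+1}\otimes U=(\mathbf 1\otimes U)\oplus(U\otimes U)$. Using the branching rules of $S_{d+1}$ one has $\mathbf 1\otimes U\cong U$ and $U\otimes U\cong\mathbf 1\oplus U\oplus V_{(d-1,2)}\oplus V_{(d-1,1,1)}$, where $V_{(d-1,2)}$ has dimension $\tfrac{(d+1)(d-2)}2$ (absent when $d=2$) and $V_{(d-1,1,1)}\cong\wedge^2U$ has dimension $\binom d2$. Thus the trivial, $(d-1,2)$, and $(d-1,1,1)$ irreps occur with multiplicity one, while the standard representation occurs with multiplicity two. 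By Schur's lemma $L$ acts as a scalar on each multiplicity-one component and as a symmetric $2\times2$ matrix on the standard isotypic component, the latter contributing two eigenvalues each of multiplicity $d$. This already reproduces the multiplicity pattern $1,\ \tfrac{(d+1)(d-2)}2,\ d,\ d,\ \binom d2$ claimed in the theorem, and it remains to evaluate the scalars.

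The scalars I would pin down as follows. The kernel of $L$ is the space of infinitesimal isometries, spanned by the translations $\{\mathbf 1\otimes v:v\in U\}$ (a copy of the standard representation) and the infinitesimal rotations about the centroid (a copy of $\wedge^2U\cong V_{(d-1,1,1)}$); since the simplex framework is infinitesimally rigid, these exhaust the kernel. Hence the $(d-1,1,1)$ scalar is $0$, and one of the two standard eigenvalues is $0$ with the translations as eigenvectors. For the remaining values I would evaluate $L$ on explicit representatives. On the invariant vector $w=\sum_k\bar{1}_k\otimes\bar e_k$ (with $\bar{1}_i,\bar e_i$ the projections of $1_i,e_i$ onto $U$), a direct computation using $\langle 1_i-1_j,\bar{1}_k\rangle=\delta_{ik}-\delta_{jk}$ gives $Lw=(d+1)w$, so the trivial scalar is $d+1$. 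The last two values—the $(d-1,2)$ scalar and the nonzero standard eigenvalue—follow either by evaluating $L$ on representatives of $V_{(d-1,2)}$ and of the non-translational copy of $U$, or more economically by combining one such computation with the trace identities $\operatorname{tr}L=2\binom{d+1}2=d(d+1)$ and $\operatorname{tr}L^2=\tfrac14\sum_{i<j,\,k<l}\langle 1_i-1_j,1_k-1_l\rangle^4=\tfrac{d(d+1)(d+7)}4$; together with the already-known eigenvalues these force the $(d-1,2)$ scalar to be $1$ and the nonzero standard eigenvalue to be $\tfrac{d+1}2$.

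The main obstacle is the two-fold multiplicity of the standard representation: one must cleanly separate the kernel copy carried by the translations from the second copy sitting inside $U\otimes U$, and produce honest representative vectors for $V_{(d-1,2)}$ and for this second standard copy on which the Rayleigh quotient of $L$ can be computed. I would handle this by choosing symmetry-adapted representatives (for instance symmetrizing or antisymmetrizing the rank-one tensors $\bar{1}_a\otimes\bar e_b$) and, where convenient, falling back on the trace identities above to sidestep delicate orthogonalizations; checking that these identities single out the correct root of the resulting quadratic is the one place where genuine care is needed.
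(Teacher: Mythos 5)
Your proposal is correct in outline, but it takes a genuinely different route from the paper. The paper never touches representation theory: it works entirely with the lower stiffness matrix on edge space, observes via Lemma \ref{lemma:down_laplacian} that all angles in the regular simplex give cosine $\tfrac12$, so that $L^{-}(K_{d+1},p^{\triangle})=I+\tfrac12 M^tM$ with $M$ the signless incidence matrix of $K_{d+1}$, and then reads off the nonzero spectrum of $M^tM$ from the signless Laplacian $MM^t=(d-1)I+J$, whose spectrum is $\{(d-1)^{(d)},\,2d^{(1)}\}$. This is a three-line, fully self-contained computation that delivers eigenvalues and multiplicities simultaneously, with no case analysis and no external input beyond the dimension of the kernel. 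Your $S_{d+1}$-equivariance argument buys conceptual insight instead: it explains \emph{a priori} why the multiplicity pattern $1,\ \tfrac{(d+1)(d-2)}{2},\ d,\ d,\ \binom{d}{2}$ must occur (your isotypic decomposition of $\Rea^{d+1}\otimes U$ is correct, as are the dimensions of $V_{(d-1,2)}$ and $V_{(d-1,1,1)}$), it identifies the kernel structurally as translations plus rotations, and it generalizes in spirit to other vertex-transitive frameworks. The cost is that you import classical infinitesimal rigidity of the simplex to pin the kernel, and you must still produce one honest eigenvalue evaluation beyond the trivial component.

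On that last point, your self-identified ``place where genuine care is needed'' is real and not dispensable: your trace values $\operatorname{tr}L=d(d+1)$ and $\operatorname{tr}L^2=\tfrac{d(d+1)(d+7)}{4}$ check out, and your computation $Lw=(d+1)w$ on the invariant vector is correct (it matches Lemma \ref{lemma:n_eigenvalue} with $n=d+1$), but the two trace identities alone admit a second solution for the pair $(a,b)$ of unknown scalars, namely the one with $b=\tfrac{(d+1)(6-d)}{2(d+2)}$ in place of $\tfrac{d+1}{2}$; for $3\le d\le 5$ both coordinates of this spurious root are strictly positive, so neither positive semidefiniteness nor the rigidity-based kernel count excludes it. Hence your plan must include at least one explicit representative computation (e.g.\ exhibiting a vector on which $L$ acts by $1$, such as the image under $R(K_{d+1},p^{\triangle})$ of an alternating edge vector supported on a $4$-cycle, which lies in the $V_{(d-1,2)}$ component), after which the linear trace identity determines the remaining eigenvalue uniquely. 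You flagged exactly this and sketched a workable remedy, so the proposal stands as a valid, if substantially longer, alternative proof.
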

(The superscript $(m)$ indicates multiplicity $m$ of the corresponding eigenvalue, here and throughout the paper.)

This settles a conjecture of Jord\'an and Tanigawa \cite[Conj. 1]{jordan2020rigidity}. In particular, we obtain $s_d=1$ for $d\ge 3$. (Note that $s_2=3/2$.)
Further, we show that this is the largest possible spectral gap for a framework $(K_{d+1},p)$. That is, 
\begin{restatable}{theorem}{connectivityofsimplex}
\label{thm:simplex}
    For $d\geq 3$, $a_d(K_{d+1})=1$.
\end{restatable}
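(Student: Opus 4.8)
The lower bound is immediate from Theorem~\ref{thm:simplex_spectrum}: since $a_d(K_{d+1})=\sup_p \lambda_{\binom{d+1}{2}+1}(L(K_{d+1},p))$ and, for $d\ge 3$, the listed spectrum has its first $d(d+1)/2=\binom{d+1}{2}$ eigenvalues equal to $0$ followed by eigenvalue $1$ (of positive multiplicity $(d+1)(d-2)/2$), we get $a_d(K_{d+1})\ge \lambda_{\binom{d+1}{2}+1}(L(K_{d+1},p^{\triangle}))=s_d=1$. The whole content is therefore the upper bound $\lambda_{\binom{d+1}{2}+1}(L(K_{d+1},p))\le 1$ for \emph{every} $p:V(K_{d+1})\to\Rea^d$. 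If $(K_{d+1},p)$ is not infinitesimally rigid then $\dim\ker L(K_{d+1},p)>\binom{d+1}{2}$, so $\lambda_{\binom{d+1}{2}+1}(L)=0\le 1$ trivially; thus I may assume $(K_{d+1},p)$ is infinitesimally rigid. Here $K_{d+1}$ is isostatic, as it has $\binom{d+1}{2}$ edges and rank $d(d+1)-\binom{d+1}{2}=\binom{d+1}{2}$, so $R(K_{d+1},p)$ has full column rank and the nonzero eigenvalues of $L=RR^t$ are exactly the (all nonzero) eigenvalues of the edge Gram matrix $G=R^tR\in\Rea^{\binom{d+1}{2}\times\binom{d+1}{2}}$. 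Consequently $\lambda_{\binom{d+1}{2}+1}(L)=\lambda_{\min}(G)$, and it suffices to produce a single test displacement $x\perp\ker L$ with Rayleigh quotient at most $1$.

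Writing a displacement as a matrix $X\in\Rea^{d\times(d+1)}$ with columns $x_w$, one has $x^t L x=\sum_{\{u,v\}}\langle d_{uv},x_u-x_v\rangle^2$, while $\ker L$ consists of the trivial motions $t\mathbf{1}^t$ (translations) and $AP$ with $A$ antisymmetric (rotations), where $P$ is the configuration matrix with columns $p(w)$. The plan is to search among the \emph{shear displacements} $X=B(P-\bar p\,\mathbf{1}^t)$, where $\bar p$ is the centroid and $B\in\Rea^{d\times d}$. A direct check shows $X\mathbf{1}=0$, so $X$ is automatically orthogonal to translations, and $PX^t=\Sigma B$ with $\Sigma=\sum_w (p(w)-\bar p)(p(w)-\bar p)^t$ the scatter matrix; hence $X\perp\ker L$ precisely when $B\Sigma=\Sigma B$. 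For such $X$ one computes $\langle d_{uv},x_u-x_v\rangle=\|p(u)-p(v)\|^{-1}(p(u)-p(v))^tB(p(u)-p(v))$ and $\|X\|_F^2=\mathrm{tr}(B^2\Sigma)$, reducing the whole problem to finding a nonzero $B$ commuting with $\Sigma$ for which the resulting ratio is $\le 1$. Two sanity checks guide the choice: the dilation $B=I$ always satisfies $\mathcal{Q}\mathbf{1}=(d+1)\Sigma\mathbf{1}$ (an exact eigenrelation coming from the one–dimensional identity $\sum_{\{u,v\}}(p(u)_i-p(v)_i)^2=(d+1)\Sigma_{ii}$), giving the top eigenvalue $d+1$ and matching the simplex spectrum, so one must steer toward the \emph{traceless} shears; and for the regular simplex $\Sigma\propto I$, every $B$ is admissible and the symmetry recovers the value $1$ consistent with Theorem~\ref{thm:simplex_spectrum}.

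The crude trace bound $\mathrm{tr}(G)=2\binom{d+1}{2}$ over $\binom{d+1}{2}$ eigenvalues yields only $\lambda_{\min}(G)\le 2$; the entire difficulty is sharpening this to $1$ uniformly in $p$. I expect the main obstacle to be the \emph{anisotropic} case, where $\Sigma$ has distinct eigenvalues: the commutation constraint $B\Sigma=\Sigma B$ then forces $B$ to be diagonal in the $\Sigma$-eigenbasis, leaving only the $(d-1)$-dimensional family of diagonal traceless shears, and a naive averaging over this family (summing the $d-1$ non-dilation generalized eigenvalues via $\mathrm{tr}(\Sigma^{-1}\mathcal{Q})-(d+1)$) need not stay below $d-1$ when some principal axis is short. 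The resolution should be to choose the test displacement adaptively rather than by symmetry or uniform averaging—aligning the shear $B$ with the small principal directions of $\Sigma$, or more generally optimizing the explicit $(d-1)$-variable ratio $\mathrm{tr}(B^2\Sigma)^{-1}\sum_{\{u,v\}}\|p(u)-p(v)\|^{-2}\big((p(u)-p(v))^tB(p(u)-p(v))\big)^2$ and proving its minimum is at most $1$; if the shear family alone proves insufficient for extreme configurations, one falls back to the rank-one kernel-orthogonal displacements $x_w=c_w e$ with $\sum_w c_w=0$ and $\sum_w c_w p(w)\parallel e$, choosing $e$ along the shortest axis, and bounds the Rayleigh quotient geometrically.
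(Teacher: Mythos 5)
Your lower bound is correct and matches the paper (read off the spectral gap of $L(K_{d+1},p^{\triangle})$ from Theorem~\ref{thm:simplex_spectrum}), and your reduction of the upper bound is sound: since $K_{d+1}$ has exactly $\binom{d+1}{2}=d(d+1)-\binom{d+1}{2}$ edges, $\lambda_{\binom{d+1}{2}+1}(L(K_{d+1},p))$ equals the smallest eigenvalue of the lower stiffness matrix (the paper uses this same observation for $d=3$), so it suffices to exhibit one good test vector. But that is where the proof stops being a proof. The entire content of the theorem is the uniform bound $\lambda_{\binom{d+1}{2}+1}(L(K_{d+1},p))\le 1$ for \emph{every} $p$, and you never establish it: you set up the family of shear displacements $X=B(P-\bar p\,\mathbf{1}^t)$ with $B$ symmetric and $B\Sigma=\Sigma B$, correctly compute the resulting Rayleigh quotient, and then explicitly hedge (``I expect the main obstacle\dots'', ``The resolution should be\dots'', ``if the shear family alone proves insufficient\dots one falls back''). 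No inequality $\le 1$ is derived for any configuration other than the regular simplex, no adaptive choice of $B$ is carried out, and the fallback family of rank-one displacements comes with no bound either. As written, this is a research plan with an acknowledged open step at its heart, not a proof; it is not even clear that the shear family suffices for all configurations, and you give no argument that it does.

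For comparison, the paper avoids optimizing over displacements in $\Rea^{d(d+1)}$ altogether. For $d=3$ it works in edge space: pairing the six edges of $K_4$ into three opposite pairs $(e_i,e_i')$ with squared-length sums ordered so that $\ell_3^2+\ell_3'^2$ is smallest, it takes the explicit vector $x=\ell_1 1_{e_1}+\ell_1' 1_{e_1'}-\ell_2 1_{e_2}-\ell_2' 1_{e_2'}$ and uses the law of cosines to get the exact identity $x^t L^{-}(K_4,p)\,x = 2(\ell_3^2+\ell_3'^2)\le \|x\|^2$, hence $\lambda_7\le 1$. For $d\ge 4$ it proves the monotonicity $a_d(K_{d+1})\le a_{d-1}(K_d)$: delete the $d$ edges at one vertex (Theorem~\ref{thm:edge_removal_interlacing} shifts the eigenvalue index by at most $d$), and observe that the remaining $d$ image points always lie in an affine hyperplane, so the problem descends to $(K_d,\Rea^{d-1})$; induction then reduces everything to the $d=3$ case. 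This dimension-reduction step is the key structural idea your proposal is missing — it explains why one only ever needs a test vector in the very rigid combinatorial situation of $K_4$, where the opposite-edge pairing makes an explicit choice possible, rather than having to solve a $(d-1)$-parameter optimization uniformly over all configurations as your plan would require.
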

However, for $d\ge 3$, $p^{\triangle}$ is not the only embedding that achieves the maximum value $a_d(K_{d+1})=1$, see Proposition~\ref{prop:nonregular}.

Next we consider (balanced) Tur{\'a}n graphs:
Let $r,n$ be positive integers such that $r$ divides $n$.
Let $V_1,\ldots,V_r$ be pairwise disjoint sets such that $|V_i|=n/r$ for all $i\in[r]$. Let $V=V_1\cup\cdots\cup V_r$ and
$E=\cup_{i\neq j\in[r]} \{\{u,v\}:\, u\in V_i, v\in V_j\}$. The graph $T(n,r)=(V,E)$ is called
a Tur{\'a}n graph, or the complete balanced $r$-partite graph on $n$ vertices.

For $r=d+1$ let $q^{\triangle}: V\to \Rea^d$ denote the mapping into the vertices of a regular $d$-simplex, such that the preimage of each vertex of the simplex equals $V_i$ for a different $i\in [d+1]$.  
We compute the spectrum of $L(T(n,d+1),q^{\triangle})$:

\begin{restatable}{theorem}{simplexrepeated}\label{thm:simplex_repeated}
Let $d\geq 2$ and $n\geq d+1$ such that $n$ is divisible by $d+1$. Then, the spectrum of $L(T(n,d+1),q^{\triangle})$ is
\[
    \left\{
    0^{(d(d+1)/2)},\frac{n}{2(d+1)}^{((n-d-1)(d-1))},\frac{n}{d+1}^{((d-2)(d+1)/2)},\frac{n}{2}^{(n-1)},
    n^{(1)}
    \right\}.
\]
\end{restatable}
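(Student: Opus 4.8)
The plan is to diagonalize $L=L(T(n,d+1),q^{\triangle})$ by exploiting the symmetry of the embedding: since $q^{\triangle}$ collapses each part $V_i$ to a single simplex vertex $w_i\in\Rea^d$, every edge between $V_i$ and $V_j$ carries the same normalized direction $e_{ij}=(w_i-w_j)/\|w_i-w_j\|$. Consequently, writing $m=n/(d+1)=|V_i|$ and viewing $L$ as a $d\times d$ block matrix indexed by $V$, the block of $L$ at $(u,v)$ depends only on the parts containing $u$ and $v$: for $u\in V_i$ the diagonal block is $L_{uu}=m\,M_i$ with $M_i=\sum_{j\ne i}e_{ij}e_{ij}^t$; for $u\in V_i$, $v\in V_j$ with $i\ne j$ it equals $-e_{ij}e_{ij}^t$; and it vanishes when $u,v$ lie in the same part. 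First I would record the resulting identity $(Lx)_u=m\sum_{j\ne i}e_{ij}e_{ij}^t x_u-\sum_{j\ne i}e_{ij}e_{ij}^t S_j$ for $u\in V_i$, where $S_j=\sum_{v\in V_j}x_v$ denotes the sum of the $\Rea^d$-blocks over part $j$.

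I would then split $\Rea^{dn}=\Rea^V\otimes\Rea^d$ orthogonally into the \emph{part-constant} subspace $W_{\mathrm{c}}$, consisting of vectors with $x_u=y_i$ for all $u\in V_i$, and the \emph{balanced} subspace $W_0$ of vectors with $S_j=0$ for every $j$. A short check using the displayed identity shows that both subspaces are $L$-invariant. On $W_{\mathrm{c}}$ the map $x\mapsto Lx$ becomes $y\mapsto m\,L(K_{d+1},p^{\triangle})\,y$ under the identification $W_{\mathrm{c}}\cong(\Rea^d)^{d+1}$, so by Theorem \ref{thm:simplex_spectrum} the eigenvalues of $L$ on $W_{\mathrm{c}}$ are exactly $m$ times those of the simplex: $0$ (multiplicity $d(d+1)/2$), $n/(d+1)$ (multiplicity $(d+1)(d-2)/2$), $n/2$ (multiplicity $d$), and $n$ (multiplicity $1$). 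On $W_0$ the sums $S_j$ vanish, so the identity collapses to $(Lx)_u=m\,M_i x_u$; hence $L$ acts block-diagonally there, as $\mathrm{Id}_{m-1}\otimes mM_i$ on the $(m-1)d$-dimensional summand coming from part $i$.

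It remains to compute the spectrum of $M_i$. Since the normalized directions $e_{ij}$ are unaffected by scaling, I may take the $w_i$ to be the vertices of a centered regular simplex of unit edge length, so that $e_{ij}=w_i-w_j$, $\sum_j w_j=0$, and (by the irreducibility of the simplex symmetry group) $\sum_j w_jw_j^t=\tfrac12 I$. Expanding and using $\sum_{j\ne i}w_j=-w_i$ then yields $M_i=(d+1)\,w_iw_i^t+\tfrac12 I$; as $w_iw_i^t$ has eigenvalue $\|w_i\|^2=\tfrac{d}{2(d+1)}$ on $\mathrm{span}(w_i)$ and $0$ on $w_i^\perp$, the eigenvalues of $M_i$ are $(d+1)/2$ (multiplicity $1$) and $1/2$ (multiplicity $d-1$). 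Scaling by $m$ and weighting by the multiplicity $m-1$ from each of the $d+1$ parts, $W_0$ contributes $n/2$ with total multiplicity $(d+1)(m-1)=n-d-1$ and $n/(2(d+1))$ with total multiplicity $(d+1)(m-1)(d-1)=(n-d-1)(d-1)$. Adding the two contributions gives eigenvalue $0$ with multiplicity $d(d+1)/2$, eigenvalue $n/(2(d+1))$ with multiplicity $(n-d-1)(d-1)$, eigenvalue $n/(d+1)$ with multiplicity $(d-2)(d+1)/2$, eigenvalue $n/2$ with multiplicity $d+(n-d-1)=n-1$, and eigenvalue $n$ with multiplicity $1$, matching the claim. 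I expect the only genuine work to be verifying $L$-invariance of $W_{\mathrm{c}}$ and $W_0$ together with the short identity for $M_i$; the remaining steps are bookkeeping of multiplicities, and the reduction to $L(K_{d+1},p^{\triangle})$ on $W_{\mathrm{c}}$ is precisely what makes Theorem \ref{thm:simplex_spectrum} directly applicable.
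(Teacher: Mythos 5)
Your proof is correct, and it takes a genuinely different route from the paper's. The paper works in the edge space: it diagonalizes the lower stiffness matrix $L^{-}(T(n,d+1),q^{\triangle})$ by exhibiting explicit eigenvectors --- the top eigenvalue $n$ via Lemma~\ref{lemma:n_eigenvalue}, the $(n-1)$-fold eigenvalue $n/2$ via the vectors $\phi_f$ of Proposition~\ref{prop:large_eigenvalues_balanced_p} (valid for any centered spherical embedding), and the eigenvalues $n/(d+1)$ and $n/(2(d+1))$ via the combinatorial edge vectors $\Phi_{i_1,i_2,i_3,i_4}$ and $\Psi_{u,v,j,k}$, checked through Lemma~\ref{lemma:condition_for_eigenvector} together with hands-on linear-independence arguments; the stated multiplicities are then exact because the lower bounds sum to $dn$. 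You instead work in the vertex space $\Rea^{dn}$ and exploit the blow-up structure of $q^{\triangle}$: the orthogonal splitting into part-constant vectors $W_{\mathrm{c}}$ and part-balanced vectors $W_0$ is $L$-invariant (your identity $(Lx)_u=mM_ix_u-\sum_{j\ne i}e_{ij}e_{ij}^tS_j$ makes both invariances immediate, and since $L$ is symmetric each would follow from the other anyway), $L|_{W_{\mathrm{c}}}$ is conjugate to $m\,L(K_{d+1},p^{\triangle})$ so Theorem~\ref{thm:simplex_spectrum} applies, and $L|_{W_0}$ acts blockwise by $mM_i$ with $M_i=(d+1)w_iw_i^t+\tfrac12 I$ --- a computation that checks out, including $\sum_j w_jw_j^t=\tfrac12 I$, $\|w_i\|^2=\tfrac{d}{2(d+1)}$, and the boundary cases $d=2$ and $n=d+1$. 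As for what each approach buys: the paper's route is self-contained (at $n=d+1$ it even re-proves Theorem~\ref{thm:simplex_spectrum}, which you must instead cite as input) and builds reusable tools, with Proposition~\ref{prop:large_eigenvalues_balanced_p} and Lemma~\ref{lemma:condition_for_eigenvector} serving again in the crosspolytope computation of Theorem~\ref{thm:crosspolytope_spectrum}; your route is shorter, requires no guessing of eigenvectors and no final dimension count to upgrade ``at least'' to ``exactly'' (the two invariant subspaces are orthogonal complements exhausting $\Rea^{dn}$), and it generalizes verbatim to any repeated-point framework: for injective $p$ and the $k$-fold blow-up $p^{k}$, the same splitting shows that the spectrum of $L(K_{kn},p^{k})$ consists of $k$ times the spectrum of $L(K_n,p)$ together with the eigenvalues of the blocks $kM_i$, each with multiplicity $k-1$ --- a structural fact directly relevant to the repeated-points conjecture in Section~\ref{sec:conclude}, and one that would also deliver Theorem~\ref{thm:crosspolytope_spectrum} by the identical method.
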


In particular, 
its spectral gap for $n\geq 2(d+1)$ is:
\[
    \lambda_{\binom{d+1}{2}+1}(L(T(n,d+1),q^{\triangle}))= \frac{n}{2(d+1)}.
\]
This improves upon the lower bound $\lambda_{\binom{d+1}{2}+1}(L(T(n,d+1),q^{\triangle}))\geq \frac{d n}{2(d+1)^2}$ obtained  in~\cite{jordan2020rigidity} (after fixing a typo and plugging $s_d=1$).

Similarly, for $r=2d$ let $q^{\diamond}:V\to \Rea^d$ denote the mapping into the vertices of a regular $d$-crosspolytope, such that the preimage of each vertex of the crosspolytope equals $V_i$ for a different $i\in [2d]$ (recall that the regular $d$-crosspolytope is defined as the convex hull of the set $\{\pm e_1,\ldots, \pm e_d\}$, where $\{e_1,\ldots,e_d\}$ is the standard basis of $\Rea^d$). We compute the spectrum of $L(T(n,2d),q^{\diamond})$:

\begin{restatable}{theorem}{crosspolytopespectrum}\label{thm:crosspolytope_spectrum}
Let $d\geq 2$ and $n\geq 2d$ such that $n$ is divisible by $2d$. 
Then, the spectrum of $L(T(n,2d),q^{\diamond})$ is
\[
    \left\{
    0^{(d(d+1)/2)},\frac{n}{2d}^{(n(d-1)-d^2)},\frac{n}{d}^{(d(d-1)/2)},\frac{n}{2}^{(n-1)},
    n^{(1)}
    \right\}.
\]
\end{restatable}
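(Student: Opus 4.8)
The plan is to exploit the fact that $(T(n,2d),q^{\diamond})$ is the $m$-fold \emph{blow-up} of the crosspolytope framework $(K_{2d},q^{\diamond})$, where $m=n/(2d)$: each of the $2d$ vertices of $K_{2d}$ is replaced by $m$ coincident copies, and all $m^2$ edges joining two parts share the same normalized direction. Writing $L=L(T(n,2d),q^{\diamond})$ and grouping the rank-one terms $\bv_{u,v}\bv_{u,v}^t$ by the pair of parts they join, one obtains $L=\sum_{a<b}M_{ab}\otimes(d_{ab}d_{ab}^t)$, where $a,b$ range over the parts, $d_{ab}$ is the common edge direction, and $M_{ab}=\sum_{u\in V_a,v\in V_b}(1_u-1_v)(1_u-1_v)^t$. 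The first step is to split $\Rea^{d|V|}=\Rea^n\otimes\Rea^d$ into two $L$-invariant subspaces: the \emph{fluctuation space} $F=\bigoplus_a (W_a\otimes\Rea^d)$, where $W_a$ is the space of zero-sum vectors supported on part $V_a$, and the \emph{average space} $A=\mathrm{span}\{\chi_a\}\otimes\Rea^d$ spanned by the part indicators $\chi_a$. A short calculation with $M_{ab}=m(\mathrm{diag}_{V_a}+\mathrm{diag}_{V_b})-\chi_a\chi_b^t-\chi_b\chi_a^t$ shows both subspaces are invariant.

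Second, I would compute $L$ on each piece. For $w\in W_a$ one checks $M_{bc}w=mw$ when $a\in\{b,c\}$ and $M_{bc}w=0$ otherwise, so $L$ restricts on $W_a\otimes\Rea^d$ to $m\,(\mathrm{Id}_{W_a}\otimes S_a)$ with $S_a=\sum_{b\neq a}d_{ab}d_{ab}^t$. For the part $a$ sitting at $\epsilon e_k$, a direct evaluation gives $S_a=I+(d-1)e_ke_k^t$, with eigenvalues $d$ (multiplicity $1$) and $1$ (multiplicity $d-1$); hence $F$ contributes the eigenvalue $n/2=md$ with multiplicity $2d(m-1)=n-2d$ and the eigenvalue $n/(2d)=m$ with multiplicity $(d-1)(n-2d)$. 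On $A$, using the orthonormal vectors $\chi_a/\sqrt m$ together with $M_{bc}\chi_b=m(\chi_b-\chi_c)$ and $M_{bc}\chi_a=0$ for $a\notin\{b,c\}$, the restriction is similar to $m\,L(K_{2d},q^{\diamond})$, the crosspolytope stiffness matrix on $2d$ distinct vertices.

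The crux is therefore the third step: computing the spectrum of the base framework $\widetilde L=L(K_{2d},q^{\diamond})$, which is exactly the $m=1$ case of the theorem. Here I would use the hyperoctahedral symmetry $B_d$ (signed permutations of the $\pm e_k$), with respect to which $\widetilde L$ is equivariant. Decomposing the vertex space $\Rea^{2d}=T\oplus U\oplus P$ (the span of the all-ones vector; the zero-sum combinations of the antipodal sums $1_{+k}+1_{-k}$; and the antipodal differences $1_{+k}-1_{-k}$) and tensoring with the reflection representation $\Rea^d$ yields a decomposition of $\Rea^{2d}\otimes\Rea^d$ into $B_d$-isotypic pieces. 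By Schur's lemma $\widetilde L$ acts as a scalar on each isotypic component, and as a small explicit matrix where an irreducible representation occurs with multiplicity greater than one, so its eigenvalues can be read off by testing $\widetilde L$ on one representative velocity field per component. I expect to recover eigenvalue $0$ on the infinitesimal rigid motions (the translation copy of $\Rea^d$ and the infinitesimal rotations $\wedge^2\Rea^d$, of total dimension $d(d+1)/2$), eigenvalue $2d$ on the radial dilation $\xi_a=q^{\diamond}(a)$, and the intermediate eigenvalues $1$, $2$, and $d$ on the remaining components with multiplicities $d(d-2)$, $d(d-1)/2$, and $2d-1$ respectively.

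Finally, I would assemble the two spectra: multiplying the base spectrum $\{0^{(d(d+1)/2)},1^{(d(d-2))},2^{(d(d-1)/2)},d^{(2d-1)},(2d)^{(1)}\}$ by $m$ and merging it with the fluctuation eigenvalues. The contributions to $n/(2d)$ from $F$ and from the scaled base combine to $(d-1)(n-2d)+d(d-2)=n(d-1)-d^2$, those to $n/2$ combine to $(n-2d)+(2d-1)=n-1$, while the eigenvalues $0$, $n/d$, and $n$ come solely from the average space; this matches the claimed spectrum, and the dimensions add up to $dn$. The main obstacle is the base-case computation in step three: the invariant-subspace reduction in steps one, two, and four is bookkeeping, whereas pinning down how the eigenvalues $1$, $2$, and $d$ are distributed among the higher $B_d$-components---in particular resolving the components in which an irreducible representation appears with multiplicity greater than one---is where the real work lies.
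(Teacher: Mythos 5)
Your reduction is correct, and it is a genuinely different route from the paper's. The paper works entirely in the edge space: it gets the eigenvalue $n$ from Lemma \ref{lemma:n_eigenvalue}, the eigenvalue $n/2$ with multiplicity $n-1$ from Proposition \ref{prop:large_eigenvalues_balanced_p}, and then constructs explicit eigenvectors of $L^{-}(T(n,2d),q^{\diamond})$ for $n/d$ (the $\phi_{ij}$) and for $n/(2d)$ (the $\Psi_{ijk}$ and $f_j^{u,v}$), verifying them with Lemma \ref{lemma:condition_for_eigenvector}, proving linear independence, and closing with the dimension count $\frac{d(d+1)}{2}+\bigl(n(d-1)-d^2\bigr)+\frac{d(d-1)}{2}+(n-1)+1=dn$. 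Your blow-up decomposition instead works in the vertex space $\Rea^n\otimes\Rea^d$, and I have checked its ingredients: the formula $M_{ab}=m(\mathrm{diag}_{V_a}+\mathrm{diag}_{V_b})-\chi_a\chi_b^t-\chi_b\chi_a^t$, the action $M_{bc}w=mw$ (if $a\in\{b,c\}$) or $0$ on $w\in W_a$, the computation $S_a=I+(d-1)e_ke_k^t$ for the part at $\pm e_k$, the unitary equivalence of $L|_A$ with $m\,L(K_{2d},q^{\diamond})$, and the final multiplicity bookkeeping $(d-1)(n-2d)+d(d-2)=n(d-1)-d^2$ and $(n-2d)+(2d-1)=n-1$ are all correct. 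What your route buys is conceptual: it isolates the $n$-independent content of the theorem in a single base case, it determines the spectrum exactly on each invariant subspace rather than via lower bounds plus a dimension count, and the blow-up lemma itself (spectrum of the blown-up framework $=$ $m$ times the base spectrum together with the fluctuation eigenvalues $m\,\mathrm{spec}(S_a)$) is of independent interest, bearing directly on the repeated-points conjecture in Section \ref{sec:conclude}.

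The genuine gap is your step three, and you flag it yourself: the base spectrum $\bigl\{0^{(d(d+1)/2)},1^{(d(d-2))},2^{(d(d-1)/2)},d^{(2d-1)},(2d)^{(1)}\bigr\}$ of $\widetilde L=L(K_{2d},q^{\diamond})$ is asserted (``I expect to recover''), not derived, so as written the argument is incomplete precisely at its crux. The good news is that the expected answer is right and your $B_d$-plan does go through: in $(T\oplus U\oplus P)\otimes V$, with $V$ the reflection representation, the only irreducible occurring with multiplicity greater than one is $V$ itself (once in $T\otimes V$ and once inside $U\otimes V$, since $V\otimes V$ contains the zero-sum diagonal representation $U$ exactly once); on that two-dimensional multiplicity space $\widetilde L$ has eigenvalues $0$ (translations) and $d$, which together with the $(d-1)$-dimensional zero-sum diagonal piece of $P\otimes V$ accounts for the multiplicity $2d-1=d+(d-1)$ of the eigenvalue $d$; the trivial, $\wedge^2$, and off-diagonal symmetric components of $P\otimes V$ carry $2d$, $0$, and $2$, and the remaining $d(d-1)-d=d(d-2)$-dimensional part of $U\otimes V$ carries $1$. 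Completing the proof therefore requires only finitely many evaluations of $\widetilde L$ on one representative vector per component (resolving the $2\times 2$ block on the $V$-isotypic space), or, more pedestrianly, specializing the paper's explicit edge-space eigenvectors to $n=2d$. Until that computation is written out, your argument proves the theorem only modulo its $n=2d$ case.
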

In particular, 
its spectral gap is:
\[
    \lambda_{\binom{d+1}{2}+1}(L(T(n,2d),q^{\diamond}))= \frac{n}{2d}.
\]

The proofs of Theorems \ref{thm:simplex_repeated} and \ref{thm:crosspolytope_spectrum} follow by computing the eigenbases the corresponding stiffness matrices.
As a corollary of Theorem \ref{thm:crosspolytope_spectrum}, we obtain
the following lower bound on $a_d(K_n)$:

\begin{restatable}{theorem}{lowerbound}\label{thm:lower_bound}
Let $d\geq 3$ and $n\geq 2d$. If $n$ is divisible by $2d$, then
\[
      a_d(K_n)\geq \frac{n}{2d}.
\]
For general $n\geq 2d$, we have
$
    a_d(K_n)\geq \left\lceil\frac{n}{2d}\right\rceil-2d+1.
$
\end{restatable}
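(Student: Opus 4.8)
The plan is to derive both parts of the bound directly from Theorem~\ref{thm:crosspolytope_spectrum}, using two elementary facts about stiffness matrices: that coincident points contribute nothing, and that restricting a framework to a subset of its vertices changes the relevant eigenvalue in a controlled way, governed by Cauchy interlacing together with a bounded positive-semidefinite correction.

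For the divisible case I would first observe that $L(K_n,q^{\diamond})=L(T(n,2d),q^{\diamond})$. Indeed, if $\{u,v\}$ is an edge of $K_n$ with $q^{\diamond}(u)=q^{\diamond}(v)$ (that is, $u,v$ lie in the same part), then the direction $d_{uv}$ vanishes, so $\bv_{u,v}=0$ and this edge contributes nothing to $L(K_n,q^{\diamond})=\sum_{\{u,v\}\in E}\bv_{u,v}\bv_{u,v}^t$. The surviving edges are exactly those of $T(n,2d)$. Hence $a_d(K_n)\ge \lambda_{\binom{d+1}{2}+1}(L(K_n,q^{\diamond}))=\lambda_{\binom{d+1}{2}+1}(L(T(n,2d),q^{\diamond}))=n/(2d)$ by Theorem~\ref{thm:crosspolytope_spectrum}.

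For general $n$, set $N=2d\lceil n/(2d)\rceil$, the least multiple of $2d$ that is at least $n$, and put $t=N-n$, so that $0\le t\le 2d-1$. I would start from the framework $(K_N,q^{\diamond})$, whose spectral gap equals $N/(2d)=\lceil n/(2d)\rceil$ by the divisible case, and delete any $t$ of its vertices to obtain $(K_n,p)$ with $p$ the restriction of $q^{\diamond}$. The heart of the argument is to compare $L(K_n,p)$ with the principal submatrix $\hat L$ of $L(K_N,q^{\diamond})$ indexed by the $dn$ coordinates of the surviving vertices. Reading off the block structure, the off-diagonal blocks of $\hat L$ and of $L(K_n,p)$ agree, while each diagonal block of $\hat L$ carries the additional terms $\sum_{w}d_{uw}d_{uw}^t$ summed over the $t$ deleted vertices $w$. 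Thus $L(K_n,p)=\hat L-D$, where $D$ is block-diagonal and positive semidefinite, with $D\preceq tI$ since each $d_{uw}d_{uw}^t$ is an orthogonal projection onto a line (or zero), hence $\preceq I$.

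Combining these gives the bound: by Cauchy interlacing, passing from $L(K_N,q^{\diamond})$ to its principal submatrix $\hat L$ can only raise the $k$-th smallest eigenvalue, so $\lambda_{\binom{d+1}{2}+1}(\hat L)\ge \lambda_{\binom{d+1}{2}+1}(L(K_N,q^{\diamond}))=\lceil n/(2d)\rceil$; and by Weyl's inequality, $\lambda_{\binom{d+1}{2}+1}(L(K_n,p))=\lambda_{\binom{d+1}{2}+1}(\hat L-D)\ge \lambda_{\binom{d+1}{2}+1}(\hat L)-\lambda_{\max}(D)\ge \lceil n/(2d)\rceil-t\ge \lceil n/(2d)\rceil-2d+1$. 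Since $a_d(K_n)$ is a supremum over embeddings, this is the claimed inequality. The main point requiring care is keeping the two inequalities pointing the right way: interlacing for a principal submatrix pushes the eigenvalue up, while the correction $D$ pushes it down by at most $t\le 2d-1$, so one must verify the uniform estimate $\lambda_{\max}(D)\le t$ — which is exactly what the bound $d_{uw}d_{uw}^t\preceq I$ on rank-one unit-direction projections delivers.
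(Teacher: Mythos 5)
Your proof is correct, and its skeleton is exactly the paper's: the divisible case follows from Theorem~\ref{thm:crosspolytope_spectrum} after noting $L(K_n,q^{\diamond})=L(T(n,2d),q^{\diamond})$ (a point the paper makes inside the proof of that theorem, since edges within a part have $d_{uv}=0$ and contribute nothing), and the general case is obtained by deleting $t=N-n\leq 2d-1$ vertices from $(K_N,q^{\diamond})$ with $N=2d\lceil n/(2d)\rceil$ and losing at most $1$ per deleted vertex. The one genuine difference is how that loss is justified: the paper simply cites Jord\'an and Tanigawa's Lemma~4.5 (``removing a vertex decreases $a_d$ by at most $1$'') and applies it $t$ times, whereas you prove the needed bound from scratch, in batched form, via the decomposition $L(K_n,p)=\hat L-D$ with $\hat L$ the principal submatrix of $L(K_N,q^{\diamond})$ on the surviving coordinates and $D$ block-diagonal with blocks $\sum_{w \text{ deleted}} d_{uw}d_{uw}^t\preceq tI$, then combining Cauchy interlacing (which, for the $k$-th \emph{smallest} eigenvalue, indeed pushes it up when passing to a principal submatrix, consistent with the paper's Theorem~\ref{thm:cauchy} iterated $dt$ times) with Weyl's inequality $\lambda_k(\hat L-D)\geq\lambda_k(\hat L)-\lambda_{\max}(D)$. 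Both inequality directions and the estimate $D\preceq tI$ check out, since each $d_{uw}d_{uw}^t$ is a projection onto a line or zero. Your version buys two small things: it is self-contained rather than resting on an external lemma, and by working with the single fixed embedding $p=q^{\diamond}|_{[n]}$ throughout it sidesteps the mild bookkeeping of passing through the supremum defining $a_d$ at each of the $t$ intermediate vertex removals. (Minor sanity check worth recording: for $d\geq 3$ and $n\geq 2d$ the multiplicity $n(d-1)-d^2\geq d(d-2)>0$ in Theorem~\ref{thm:crosspolytope_spectrum}, so $n/(2d)$ really is the $\left(\binom{d+1}{2}+1\right)$-st smallest eigenvalue, as your divisible case needs.)
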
 
This improves the previously known lower bound. Finally, we prove an upper bound on the $d$-dimensional algebraic connectivity of the complete graph:

\begin{restatable}{theorem}{upperbound}\label{cor:upperbound}
Let $d\geq 3$ and $n\geq d+1$. Then, 
\[
    a_d(K_n)\leq \frac{2n}{3(d-1)}+\frac{1}{3}\,.
\]
\end{restatable}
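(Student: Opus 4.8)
The plan is to bound the spectral gap $\lambda_{\binom{d+1}{2}+1}(L(K_n,p))$ from above for every embedding $p$, by exhibiting a suitable test subspace on which the Rayleigh quotient is controlled. Since $a_d(K_n)$ is a supremum over all $p$, it suffices to produce, for an arbitrary embedding $p$, a subspace $W\subseteq \Rea^{dn}$ of dimension larger than $\binom{d+1}{2}$ on which the quadratic form $x^t L(K_n,p) x$ is small relative to $\|x\|^2$; by the Courant–Fischer min-max characterization, this forces $\lambda_{\binom{d+1}{2}+1}(L(K_n,p))$ to be at most the maximal Rayleigh quotient over $W$.

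The key computational fact is that for the complete graph the quadratic form has the explicit expression
\[
x^t L(K_n,p)\, x=\sum_{\{u,v\}}\bigl\langle x_u-x_v,\,d_{uv}\bigr\rangle^2,
\]
where $x=(x_u)_{u\in V}\in(\Rea^d)^n$ and $d_{uv}$ is the unit vector defined in the introduction. The first step is therefore to fix $p$ and look for test vectors $x$ whose increments $x_u-x_v$ are as orthogonal as possible to the directions $d_{uv}$. A natural way to build a large family of such vectors is to exploit the geometry of the point configuration $\{p(v)\}$: I would partition or group the directions $d_{uv}$ and, for each group, choose displacement fields $x$ supported so that the relevant inner products nearly cancel. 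The target dimension count is what yields the $\frac{2n}{3(d-1)}$ shape of the bound — one expects to construct on the order of $\frac{3(d-1)}{2}$-per-unit worth of independent low-energy directions, after discarding the $\binom{d+1}{2}$ trivial (infinitesimal isometry) directions.

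Concretely, the most promising route is an averaging/counting argument: bound the \emph{trace} of $L$ restricted to a well-chosen subspace, or equivalently bound $\sum_i \langle \bv_i, x\rangle^2$ averaged over a family of test vectors, and compare with $\sum_i \|x\|^2$. Because each column $\bv_{u,v}=(1_u-1_v)\otimes d_{uv}$ has squared norm $2$ and there are $\binom n2$ of them, the total trace of $L$ is $2\binom n2=n(n-1)$, which already gives the crude averaged bound $\lambda_{\binom{d+1}{2}+1}\le \operatorname{tr}(L)/(dn-\binom{d+1}{2})\approx n/d$; the factor $\tfrac{2}{3}$ improvement over this must come from a sharper accounting that uses the $d$-dimensionality of the directions — specifically, that for a fixed vertex $u$ the unit vectors $\{d_{uv}\}_v$ live in $\Rea^d$, so one can choose the local component $x_u$ in the orthogonal complement of the ``heaviest'' directions at $u$ and thereby kill a fixed fraction of the contributions incident to $u$.

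The main obstacle I expect is making the local orthogonality choices \emph{simultaneously} consistent across all vertices so that the resulting global displacement field $x$ genuinely lies in a subspace of the required dimension and is not accidentally concentrated in the trivial kernel. A clean way to sidestep global bookkeeping is to average over a random or symmetric ensemble of directions (e.g.\ integrate $x_u=A\,p(u)$ over a well-chosen distribution of matrices $A$, or over rotations) and show the \emph{expected} energy is small while the expected squared norm is bounded below; Courant–Fischer then applies to the span. Getting the constants to land exactly at $\frac{2n}{3(d-1)}+\frac13$ will require being careful about the worst-case configuration — presumably a configuration where the directions $d_{uv}$ cluster so as to maximize the form — and verifying that the dimension surplus over $\binom{d+1}{2}$ is strictly positive for all $n\ge d+1$, which is where the additive $+\tfrac13$ and the denominator $d-1$ (rather than $d$) should emerge.
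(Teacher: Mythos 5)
There is a genuine gap: your proposal carries out only the crude trace argument, which you yourself note gives $\lambda_{\binom{d+1}{2}+1}\le \mathrm{Tr}(L)/\bigl(dn-\binom{d+1}{2}\bigr)\approx n/d$, and then merely gestures at where the improvement to $\frac{2n}{3(d-1)}+\frac13$ ``must come from'' without supplying it. The suggested mechanisms --- choosing $x_u$ orthogonal to the ``heaviest'' directions at each vertex, or averaging $x_u=A\,p(u)$ over random rotations --- are not developed into any estimate, and it is not clear they can be: the difficulty you correctly identify (making local choices globally consistent while staying clear of the trivial kernel) is exactly the hard part, and no construction is given. In particular, no test vector or subspace with Rayleigh quotient bounded by $\frac{2n}{3(d-1)}+\frac13$ is ever produced, so the theorem is not proved. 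Your heuristic for the constants is also off: in the actual bound the denominator $d-1$ arises not from a ``dimension surplus'' count of low-energy directions but from discarding the top $n$ eigenvalues from the trace identity, leaving $(d-1)n-\binom{d+1}{2}$ middle eigenvalues each at least the spectral gap.

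The paper's key idea, absent from your proposal, runs in the opposite direction: rather than constructing low-energy test vectors, it proves a \emph{lower} bound on the sum of the $n$ \emph{largest} eigenvalues, namely $\sum_{j=(d-1)n+1}^{dn}\lambda_j(L(K_n,p))\ge \frac{n^2}{3}+n$ for injective $p$ (Lemma \ref{lem:one_third}). This is done in edge space via Ky Fan's maximum principle (Theorem \ref{thm:fan}), applied to $L^-(K_n,p)$ and the explicit orthogonal projection $P$ onto the $n$-dimensional span of the vertex-star indicator vectors $v^{(i)}\in\Rea^{E}$; the geometric input is that for each triple $\{i,j,j'\}$ the three relevant entries of $L^-$ are the cosines of the angles of the (possibly flat) triangle $p(i),p(j),p(j')$, whose sum is at least $1$. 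Subtracting $\frac{n^2}{3}+n$ from $\mathrm{Tr}(L)=n^2-n$ and dividing the remainder among the middle eigenvalues yields $\lambda\le\frac{2n^2/3-2n}{(d-1)n-\binom{d+1}{2}}$, which is then massaged into the stated form. One further point your sketch glosses over: the triangle-cosine step needs $p$ injective, so the reduction from arbitrary maps to injective ones (Lemma \ref{lemma:a_d_for_embeddings}, proved via the interlacing Theorem \ref{thm:edge_removal_interlacing}) is a necessary ingredient, not an optional remark.
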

This follows by proving a lower bound on the sum of the $n$ largest eigenvalues of $L(K_n,p)$ for every embedding $p$ into $\Rea^d$ (see Lemma \ref{lem:one_third}).

Most of our results rely on analyzing the \emph{lower stiffness matrix} of the framework $(G,p)$, defined by
\[
    L^{-}(G,p)=R(G,p)^t R(G,p) \in \Rea^{|E|\times |E|}.
\] 
It is easy to check that $\rank(L(G,p))=\rank(L^{-}(G,p))=\rank(R(G,p))$, and that the non-zero eigenvalues of $L(G,p)$ are the same as those of $L^{-}(G,p)$, namely also with same multiplicities.

\textbf{Outline}: In Section~\ref{sec:prelim} we describe $L^{-}(G,p)$ explicitly and apply Cauchy's Interlacing Theorem to relate the spectra of $L(G,p)$ and $L(G\setminus{e},p)$ where $e$ is an edge in $G$.
In Section~\ref{sec:spec_simplex} we compute the spectrum of $L(K_{d+1},p^{\triangle})$ and determine the value of $a_d(K_{d+1})$.
In Section~\ref{sec:Turan} we find the spectrum and eigenbasis of $L(T(n,d+1),q^{\triangle})$, improving on the estimate in~\cite{jordan2020rigidity} for its spectral gap, and similarly we find the spectrum and eigenbasis of $L(T(n,2d),q^{\diamond})$, concluding a lower bound on $a_d(K_n)$.
In Section~\ref{sec:top-n-ev} we lower bound the sum of the largest $n$ eigenvalues of $L(K_n,p)$ for every embedding $p$, and use it to prove an upper bound on $a_d(K_n)$.
We conclude in Section~\ref{sec:conclude} with open problems and conjectures on $a_d(G)$ for graphs of interest.

\section{The lower stiffness matrix}\label{sec:prelim}

We start with the following explicit description of $L^{-}(G,p)$:

\begin{lemma}\label{lemma:down_laplacian}
Let $G=(V,E)$ be a graph and let $p:V\to \Rea^d$. Let $e_1,e_2\in E$. Then,
\[
    L^{-}(G,p)(e_1,e_2)= \begin{cases}
                    2 & \text{ if } e_1=e_2=\{i,j\} \text{ and } p(i)\neq p(j),\\
  \cos(\theta(e_1,e_2)) & \text{ if } |e_1\cap e_2|=1,\\
                    0 & \text{ otherwise,}
                    \end{cases} 
\]
where, for $e_1=\{i,j\}$ and $e_2=\{i,k\}$, 
$\theta(e_1,e_2)$ is the angle between $d_{ij}$ and $d_{ik}$; that is, $\cos(\theta(e_1,e_2))=d_{ij}\cdot d_{ik}$ (note that, by convention, if $d_{ij}=0$ or $d_{ik}=0$, then $\cos(\theta(e_1,e_2))=0$).
\end{lemma}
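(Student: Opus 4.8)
The plan is to compute the entries of $L^-(G,p) = R(G,p)^t R(G,p)$ directly from the definition of $R(G,p)$, since each entry is an inner product of two of its columns. Recall that the columns of $R(G,p)$ are indexed by the edges of $G$, and the column corresponding to an edge $e = \{u,v\}$ is the vector $\bv_{u,v} = (1_u - 1_v)\otimes d_{uv} \in \Rea^{d|V|}$. Thus for edges $e_1,e_2 \in E$ we have
\[
    L^-(G,p)(e_1,e_2) = \langle \bv_{e_1}, \bv_{e_2}\rangle,
\]
and the whole lemma reduces to evaluating this inner product in the three cases $e_1 = e_2$, $|e_1\cap e_2|=1$, and $e_1\cap e_2 = \emptyset$.

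First I would exploit the standard identity for Kronecker products, $\langle a\otimes x,\, b\otimes y\rangle = \langle a,b\rangle\,\langle x,y\rangle$, to split each inner product into a ``vertex part'' coming from the $(1_u-1_v)$ factors and a ``direction part'' coming from the $d_{uv}$ factors. The vertex part is easy: for $e_1 = \{i,j\}$ and $e_2 = \{k,\ell\}$, expanding $\langle 1_i - 1_j,\, 1_k - 1_\ell\rangle$ using orthonormality of the standard basis $\{1_u\}$ gives $2$ when $e_1 = e_2$, $\pm 1$ when the edges share exactly one vertex, and $0$ when they are disjoint. This already yields the third (disjoint) case immediately, and it explains why only shared-vertex interactions survive.

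For the diagonal case $e_1 = e_2 = \{i,j\}$, the inner product is $\langle 1_i - 1_j, 1_i - 1_j\rangle\,\|d_{ij}\|^2 = 2\|d_{ij}\|^2$; here I would just observe that $d_{ij}$ is a unit vector when $p(i)\neq p(j)$ (by its normalized definition) and is $0$ otherwise, giving the stated value $2$ (or $0$, consistent with the convention). For the shared-vertex case, the only mild care needed is bookkeeping of signs: writing $e_1 = \{i,j\}$, $e_2 = \{i,k\}$ with the common vertex $i$, one must track how the orientation of each $\bv$ (i.e.\ which endpoint carries $+d$ and which carries $-d$) interacts with the identity $d_{ji} = -d_{ij}$ in the direction factor, so that the apparent signs cancel and the result is the correctly-signed $\langle d_{ij}, d_{ik}\rangle = \cos(\theta(e_1,e_2))$. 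I do not expect a genuine obstacle here; the main thing to get right is this sign/normalization accounting, together with the degenerate convention $\cos(\theta)=0$ whenever one of the relevant directions vanishes, which follows since then the direction factor is literally zero regardless of the vertex factor.
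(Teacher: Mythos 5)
Your proposal is correct and takes essentially the same route as the paper's own proof: both express $L^{-}(G,p)(e_1,e_2)$ as the inner product of the columns $(1_i-1_j)\otimes d_{ij}$ of $R(G,p)$, factor it via the Kronecker identity into a vertex factor times a direction factor, and then settle the three cases, with the same sign bookkeeping using $d_{ji}=-d_{ij}$ in the shared-vertex case (the paper just writes out the four subcases $i=k$, $i=l$, $j=k$, $j=l$ explicitly). Your handling of the degenerate convention, via the direction factor vanishing when $d_{ij}=0$, also matches the paper's treatment.
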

\begin{proof}
For convenience, we identify the vertex set of $G$ with the set $[n]$. Let $e_1=\{i,j\}$ and $e_2=\{k,l\}$, where $i<j$ and $k<l$. Let $\{1_i\}_{i\in V}$ be the standard basis for $\Rea^{|V|}$, and $\{1_e\}_{e\in E}$ be the standard basis for $\Rea^{|E|}$.
Then,
\[
    R(G,p)1_{e_1}= (1_i-1_j)\otimes d_{ij},
\]
and similarly
\[
    R(G,p)1_{e_2}= (1_k-1_l)\otimes d_{kl}.
\]
Note that
\[
L^{-}(G,p)(e_1,e_2)= (R(G,p) 1_{e_1})^t (R(G,p)1_{e_2})= ((1_i-1_j)\cdot(1_k-1_l))(d_{ij}\cdot d_{kl}).
\]
If $e_1=e_2$, we obtain
\[
L^{-}(G,p)(e_1,e_2)= \|1_i-1_j\|^2 \cdot \|d_{ij}\|^2=\begin{cases} 2 & \text{ if } p(i)\neq p(j),\\
0 & \text{ otherwise.}
\end{cases}
\]
If $e_1\cap e_2=\emptyset$, then
\[
L^{-}(G,p)(e_1,e_2)=  ((1_i-1_j)\cdot(1_k-1_l))(d_{ij}\cdot d_{kl})=0\cdot (d_{ij}\cdot d_{kl})=0.
\]
Finally, assume $|e_1\cap e_2|=1$. Then, either $i=k$, or $i=l$, or $j=k$ or $j=l$. If $i=k$, then
\[
L^{-}(G,p)(e_1,e_2)=  1\cdot (d_{ij}\cdot d_{il})= \cos(\theta(e_1,e_2)).
\]
If $i=l$, then
\[
L^{-}(G,p)(e_1,e_2)=  (-1)\cdot (d_{ij}\cdot d_{ki})= d_{ij}\cdot d_{ik}= \cos(\theta(e_1,e_2)).
\]
The other two cases follow similarly.
\end{proof}

\begin{remark} In \cite{aryankia2021spectral}, the lower stiffness matrix $L^{-}(G,p)$ was studied in the special case where $G=K_3$, the complete graph on three vertices, and $p$ is an embedding of the vertices in $\Rea^2$.
\end{remark}

\subsection{Interlacing of spectra}

We will use the following special case of Cauchy's interlacing theorem:
\begin{theorem}[See e.g. \cite{brouwer2011spectra}]\label{thm:cauchy}
Let $A$ be a real symmetric matrix of size $n\times n$ and $B$ a principal submatrix of $A$ of size $(n-1)\times(n-1)$. Let $\mu_1\geq \mu_2\geq  \cdots\geq \mu_n$ be the eigenvalues of $A$ and $\mu'_1\geq\mu'_2\geq \cdots \geq \mu'_{n-1}$ be the eigenvalues of $B$. Then, for $1\leq i\leq n-1$, we have
\[
    \mu_i \geq \mu'_i\geq \mu_{i+1}.
\]
\end{theorem}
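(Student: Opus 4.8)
The plan is to derive both inequalities from the Courant--Fischer min--max characterization of the eigenvalues of a real symmetric matrix. Recall that for a symmetric $M$ of size $m\times m$ with eigenvalues $\nu_1\geq\cdots\geq\nu_m$, one has the two dual formulas
\[
\nu_i=\max_{\dim S=i}\ \min_{0\neq x\in S}\frac{x^t M x}{x^t x}=\min_{\dim S=m-i+1}\ \max_{0\neq x\in S}\frac{x^t M x}{x^t x},
\]
where $S$ ranges over subspaces of the indicated dimension. I would take this variational description as known and apply it to both $A$ and $B$. Throughout, write $R_M(x)=x^t M x/x^t x$ for the Rayleigh quotient.

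The key structural observation is that, since $B$ is a principal submatrix of $A$ of size $(n-1)\times(n-1)$, after reindexing we may assume $B$ occupies the top-left $(n-1)\times(n-1)$ block of $A$. I then identify $\Rea^{n-1}$ with the coordinate subspace $\{x\in\Rea^n:\ x_n=0\}$ via $y\mapsto \tilde y=(y,0)$. The crucial identities $\tilde y^t A\,\tilde y=y^t B\,y$ and $\|\tilde y\|=\|y\|$ hold, so $R_A(\tilde y)=R_B(y)$: the Rayleigh quotient of $A$ restricted to this coordinate subspace coincides with that of $B$.

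For the first inequality $\mu_i\geq\mu'_i$ I would use the max--min form. Take an $i$-dimensional subspace $S\subseteq\Rea^{n-1}$ achieving $\mu'_i$; its image $\tilde S$ is an $i$-dimensional subspace of $\Rea^n$ on which the Rayleigh quotient of $A$ agrees with that of $B$, so $\min_{0\neq x\in\tilde S}R_A(x)=\mu'_i$. Since $\mu_i$ is the maximum of this inner minimum over \emph{all} $i$-dimensional subspaces of $\Rea^n$, we conclude $\mu_i\geq\mu'_i$. For the second inequality $\mu'_i\geq\mu_{i+1}$ I would symmetrically use the min--max form: an $(n-i)$-dimensional subspace $W\subseteq\Rea^{n-1}$ achieving $\mu'_i$ embeds as an $(n-i)$-dimensional subspace $\tilde W\subseteq\Rea^n$ with $\max_{0\neq x\in\tilde W}R_A(x)=\mu'_i$, and since $\mu_{i+1}$ is the minimum of this max over all $(n-i)$-dimensional subspaces of $\Rea^n$, we obtain $\mu_{i+1}\leq\mu'_i$.

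There is no real obstacle here, as the statement is classical (and indeed only cited in the paper). The only points demanding care are bookkeeping: matching the subspace dimensions across the two dual formulas (dimension $i$ in the max--min form for the upper bound, dimension $n-i$ in the min--max form for the lower bound) and confirming the Rayleigh-quotient identity under the coordinate embedding. An alternative route would be to write $B=P^tAP$, where $P$ is the $n\times(n-1)$ matrix whose columns are the first $n-1$ standard basis vectors (so $P$ has orthonormal columns), and invoke the general Cauchy interlacing statement for compressions $P^tAP$; this is slightly more general but ultimately reduces to the same min--max argument.
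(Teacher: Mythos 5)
Your proof is correct, and there is nothing in the paper to compare it against: the statement is quoted as a classical fact with a citation (to Brouwer--Haemers), and no proof is given in the paper itself. Your Courant--Fischer argument is the standard textbook derivation, and the bookkeeping checks out --- the reindexing putting $B$ in the top-left block is a permutation similarity (so harmless to both spectra), the achieving subspaces exist (spans of appropriate eigenvectors of $B$), and the dimensions match on both sides: dimension $i$ in the max--min form gives $\mu_i \geq \mu'_i$, while dimension $n-i$, which is simultaneously $(n-1)-i+1$ for $B$ and $n-(i+1)+1$ for $A$, gives $\mu'_i \geq \mu_{i+1}$.
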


We obtain the following interlacing result, generalizing a known result for graph Laplacians (see e.g. \cite[Theorem 13.6.2]{godsil2001algebraic}).

\begin{theorem}\label{thm:edge_removal_interlacing}
Let $G=(V,E)$ be a graph with $|V|=n$, and let $p:V\to \Rea^d$. Let $e\in E$, and let $G\setminus e=(V,E\setminus\{e\})$. Let $\lambda_1\leq \lambda_2\leq \cdots\leq \lambda_{dn}$ be the eigenvalues of $L(G,p)$ and $\lambda'_1\leq \cdots \leq \lambda'_{dn}$ be the eigenvalues of $L(G\setminus e,p)$. Let $\lambda_0=0$. Then, we have
\[
    \lambda_{i-1} \leq \lambda'_{i} \leq \lambda_{i},
\]
for all $1\leq i\leq dn$.
\end{theorem}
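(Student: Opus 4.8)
The structural heart of the statement is the identity
\[
L(G,p)=L(G\setminus e,p)+\bv_e\bv_e^{t},
\]
where $\bv_e$ is the column of $R(G,p)$ indexed by the edge $e$. Indeed, $L(G,p)=R(G,p)R(G,p)^t=\sum_{f\in E}\bv_f\bv_f^{t}$, and deleting the column $\bv_e$ from $R(G,p)$ removes exactly the summand $\bv_e\bv_e^{t}$. Thus $L(G,p)$ is obtained from $L(G\setminus e,p)$ by adding a single positive semidefinite rank-one matrix, and the whole theorem reduces to the interlacing of eigenvalues under such a perturbation.

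I would record the two inequalities separately. The bound $\lambda'_i\le\lambda_i$ is mere monotonicity: for every $x$ we have $x^t L(G,p)x=x^tL(G\setminus e,p)x+(\bv_e^{t}x)^2\ge x^tL(G\setminus e,p)x$, so the min-max (Courant--Fischer) characterization gives $\lambda'_i\le\lambda_i$ for all $i$; taking $i=1$ together with $L(G\setminus e,p)\succeq 0$ also yields the boundary case $\lambda_0=0\le\lambda'_1$. For the other bound $\lambda_{i-1}\le\lambda'_i$, I would rewrite it as $\lambda_i\le\lambda'_{i+1}$ and prove it by min-max restricted to the hyperplane $\bv_e^{\perp}$, on which $L(G,p)$ and $L(G\setminus e,p)$ coincide. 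Letting $W$ be the span of the eigenvectors of $L(G\setminus e,p)$ for its $i+1$ smallest eigenvalues, the subspace $W\cap\bv_e^{\perp}$ has dimension at least $i$, and on it the Rayleigh quotient of $L(G,p)$ equals that of $L(G\setminus e,p)$, hence is at most $\lambda'_{i+1}$; using such an $i$-dimensional test subspace in the min-max formula for $\lambda_i$ gives $\lambda_i\le\lambda'_{i+1}$, as desired.

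Alternatively, and more in the spirit of the tools already set up, one can route the argument through the lower stiffness matrix and Cauchy's interlacing theorem. Since the $(f_1,f_2)$-entry of $L^{-}(G,p)=R(G,p)^tR(G,p)$ is $\bv_{f_1}\cdot\bv_{f_2}$, deleting the column $\bv_e$ of $R(G,p)$ deletes exactly the row and column indexed by $e$; hence $L^{-}(G\setminus e,p)$ is a principal submatrix of $L^{-}(G,p)$ of codimension one. Theorem~\ref{thm:cauchy} then gives the interlacing of the spectra of these two matrices, and, since the nonzero eigenvalues of $L^{-}$ and of $L$ coincide with multiplicities, this interlacing can be transported to $L(G,p)$ and $L(G\setminus e,p)$.

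The one point requiring care --- and the main obstacle in this second route --- is the bookkeeping of zero eigenvalues: $L$ and $L^{-}$ live in different dimensions ($dn$ versus $|E|$) and therefore carry different numbers of zeros, and removing $e$ may drop $\rank R(G,p)$ by one or leave it unchanged. One must check that, after discarding the surplus zeros at the bottom of each spectrum, the Cauchy inequalities align precisely into $\lambda_{i-1}\le\lambda'_i\le\lambda_i$, which amounts to a short case analysis according to whether $\rank R(G\setminus e,p)$ equals $\rank R(G,p)$ or is one less. The first (rank-one) route sidesteps this entirely, so I would present it as the main argument; the boundary convention $\lambda_0=0$ is consistent with both approaches, since it only encodes the positive semidefiniteness of $L(G\setminus e,p)$.
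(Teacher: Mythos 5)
Your proposal is correct, and your primary argument takes a genuinely different route from the paper's. The paper's proof is precisely your ``alternative'': it observes that $L^{-}(G\setminus e,p)$ is a principal submatrix of $L^{-}(G,p)$ of codimension one, applies Cauchy interlacing (Theorem~\ref{thm:cauchy}), pads both descending spectra with zeros, and transfers the inequalities to $L$ via the identification $\lambda_{dn+1-i}=\mu_i$. Notably, the case analysis you anticipate there (on whether $\rank R(G,p)$ drops upon deleting $e$) is not needed: since $L$ and $L^{-}$ are both positive semidefinite with the same positive eigenvalues counted with multiplicity, and the number of positive eigenvalues is at most $dn$, the zero-padded descending lists match entry by entry in all cases --- this uniform padding is exactly how the paper dispatches the bookkeeping you flag. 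Your main route --- the rank-one identity $L(G,p)=L(G\setminus e,p)+\bv_e\bv_e^{t}$, monotonicity of Rayleigh quotients for $\lambda'_i\le\lambda_i$, and Courant--Fischer applied to an $i$-dimensional subspace of $W\cap\bv_e^{\perp}$ (on which the two quadratic forms agree) for $\lambda_{i-1}\le\lambda'_i$, with the boundary case $i=1$ supplied by $L(G\setminus e,p)\succeq 0$ --- is complete and correct, and it remains valid in the degenerate situation $\bv_e=0$ (when $p$ identifies the endpoints of $e$), where the two matrices simply coincide. As for what each approach buys: yours stays in the fixed space $\Rea^{dn}$, requires no transfer between the spectra of $L$ and $L^{-}$, and is the standard self-contained interlacing argument for a rank-one positive semidefinite perturbation; the paper's version instead reuses the machinery it has just built (the entrywise description of $L^{-}$ in Lemma~\ref{lemma:down_laplacian} and Theorem~\ref{thm:cauchy}), turning edge deletion into a literal row-and-column deletion, which fits the $L^{-}$-centric viewpoint used throughout the rest of the paper.
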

\begin{proof}
Let $\mu'_1\geq \cdots\geq \mu'_{|E|}$ be the eigenvalues of $L^{-}(G,p)$ and $\mu_1\geq \cdots\geq \mu_{|E|-1}$ be the eigenvalues of $L^{-}(G\setminus e,p)$. 

 Note that $L^{-}(G\setminus\{e\},p)$ is a principal submatrix of $L^{-}(G,p)$, therefore by Theorem \ref{thm:cauchy}, we have
\[
    \mu_i \geq \mu'_i \geq \mu_{i+1}
\]
for $i=1,\ldots,|E|-1$. 

For $i>|E|-1$, let $\mu'_i=0$, and for $i>|E|$, let $\mu_i=0$. Then, we have
\[
    \mu_i \geq \mu'_i \geq \mu_{i+1}
\]
for all $i$.
Since $L(G,p)$ and $L^{-}(G,p)$ have the same positive eigenvalues, we have for all $i=1,\ldots,d n$
\[
    \lambda_{dn+1-i}=\mu_{i},
\]
and similarly
\[
 \lambda'_{dn+1-i}=\mu'_{i}.
\]
Therefore, we have
\[
    \lambda_{dn+1-i} \geq \lambda'_{dn+1-i} \geq \lambda_{dn-i}
\]
for all $i=1,\ldots,dn$ (using $\lambda_0=0$).
So, for $j=1,\ldots,dn$, we obtain
\[
    \lambda_{j-1} \leq \lambda'_{j} \leq \lambda_{j},
\]
as wanted.
\end{proof}

As an application of Theorem \ref{thm:edge_removal_interlacing}, we show that restricting attention to maps $p:V\to \Rea^d$ that are embeddings (i.e injective) does not affect the $d$-dimensional algebraic connectivity of a graph $G=(V,E)$.

\begin{lemma}\label{lemma:a_d_for_embeddings}
Let $G=(V,E)$ and $d\geq 1$. Then,
\[
a_d(G)=\sup\left\{ \lambda_{\binom{d+1}{2}+1}(L(G,p)) \middle| \, p: V\to \Rea^d,\, p \text{ is injective}\right\}.
\]
\end{lemma}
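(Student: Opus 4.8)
The plan is to prove the two inequalities separately. One direction is immediate: injective maps form a subset of all maps $p:V\to\Rea^d$, so the supremum over injective maps is at most $a_d(G)$. The content lies in the reverse inequality, and for this it suffices to show that, given an arbitrary $p:V\to\Rea^d$ and any $\epsilon>0$, there is an \emph{injective} $p'$ with $\lambda_{\binom{d+1}{2}+1}(L(G,p'))\geq \lambda_{\binom{d+1}{2}+1}(L(G,p))-\epsilon$; taking the supremum over $p$ and letting $\epsilon\to 0$ then finishes the argument.

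The main obstacle is that $p\mapsto L(G,p)$ is \emph{not} continuous: because of the normalization in the definition of $d_{uv}$, the entries of $L^{-}(G,p)$ listed in Lemma \ref{lemma:down_laplacian} jump exactly at maps that degenerate an edge, i.e.\ where $p(i)=p(j)$ for some $\{i,j\}\in E$. To isolate this difficulty, I would first pass to the subgraph $G^\circ=(V,E^\circ)$ with $E^\circ=\{\{i,j\}\in E: p(i)\neq p(j)\}$ consisting of the non-degenerate edges. Since each degenerate edge contributes a zero column to $R(G,p)$, deleting those columns does not change $R(G,p)R(G,p)^t$, so $L(G,p)=L(G^\circ,p)$ and the two matrices have the same spectrum.

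On the edge-injective locus $\{q: q(i)\neq q(j)\ \forall\, \{i,j\}\in E^\circ\}$, each direction $d_{ij}$ is a continuous function of $q$, so by Lemma \ref{lemma:down_laplacian} the entries of $L^{-}(G^\circ,q)$, and hence those of $L(G^\circ,q)=R(G^\circ,q)R(G^\circ,q)^t$, depend continuously on $q$. As the eigenvalues of a real symmetric matrix are continuous in its entries, $q\mapsto \lambda_{\binom{d+1}{2}+1}(L(G^\circ,q))$ is continuous on this locus, which is open and contains $p$. Thus there is a neighborhood of $p$ on which this eigenvalue stays within $\epsilon$ of its value at $p$. Since the injective maps are dense (their complement is the finite union of the hyperplanes $\{q: q(u)=q(v)\}$), I can pick an injective $p'$ in that neighborhood, yielding $\lambda_{\binom{d+1}{2}+1}(L(G^\circ,p'))\geq \lambda_{\binom{d+1}{2}+1}(L(G^\circ,p))-\epsilon=\lambda_{\binom{d+1}{2}+1}(L(G,p))-\epsilon$.

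It remains to compare $L(G^\circ,p')$ with $L(G,p')$. Because $E^\circ\subseteq E$, the graph $G$ is obtained from $G^\circ$ by adding edges, and applying Theorem \ref{thm:edge_removal_interlacing} once per added edge shows that every eigenvalue can only weakly increase, so $\lambda_{\binom{d+1}{2}+1}(L(G,p'))\geq \lambda_{\binom{d+1}{2}+1}(L(G^\circ,p'))$. Chaining the three inequalities gives $\lambda_{\binom{d+1}{2}+1}(L(G,p'))\geq \lambda_{\binom{d+1}{2}+1}(L(G,p))-\epsilon$ with $p'$ injective, which is exactly the reverse inequality. The only delicate point is the interaction between the discontinuity and the edge deletion/addition; once the reduction to $G^\circ$ confines the analysis to the continuous regime, the continuity and interlacing steps are routine.
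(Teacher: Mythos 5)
Your proposal is correct and takes essentially the same route as the paper's own proof: both pass to the subgraph $G'=(V,E')$ of non-degenerate edges (using that zero columns of $R(G,p)$ make $L(G,p)=L(G',p)$), use the continuity of the entries of $L^{-}(G',\cdot)$ from Lemma \ref{lemma:down_laplacian} to perturb $p$ to a nearby injective $p'$ with at most an $\epsilon$ loss in the spectral gap, and then apply the interlacing result (Theorem \ref{thm:edge_removal_interlacing}) edge by edge to conclude $\lambda_{\binom{d+1}{2}+1}(L(G,p'))\geq \lambda_{\binom{d+1}{2}+1}(L(G',p'))$. No substantive differences to report.
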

\begin{proof}
Let
\[
\tilde{a}_d(G)=\sup\left\{ \lambda_{\binom{d+1}{2}+1}(L(G,p)) \middle| \, p: V\to \Rea^d,\, p \text{ is injective}\right\}.
\]
Clearly, $\tilde{a}_d(G)\leq a_d(G)$. 
In the other direction, let $p:V\to\Rea^d$. We will show that for any $\epsilon>0$ there exists a $p':V\to \Rea^d$ such that $p'$ is injective and
\[
    \lambda_{\binom{d+1}{2}+1}(L(G,p')) > \lambda_{\binom{d+1}{2}+1}(L(G,p))-\epsilon.
\]
Let $E'=\{\{u,v\}\in E:\, p(u)\neq p(v)\}$ and let
$G'=(V,E')$. Note that $L(G,p)=L(G',p)$, and in particular $\lambda_{\binom{d+1}{2}+1}(L(G,p))=\lambda_{\binom{d+1}{2}+1}(L(G',p))$.

By Lemma \ref{lemma:down_laplacian}, for $p'$ in a neighborhood of $p$, the entries of the lower stiffness matrix $L^{-}(G',p')$ are continuous functions of $p'$. Therefore, the spectral gap $\lambda_{\binom{d+1}{2}+1}(L(G',p'))$ is also continuous in $p'$ (as a root of the characteristic polynomial of $L^{-}(G',p'))$. That is, there exists $\delta>0$ such that if $\|p(u)-p'(u)\|<\delta$ for all $u\in V$, then
\[
    \left|\lambda_{\binom{d+1}{2}+1}(L(G',p'))-\lambda_{\binom{d+1}{2}+1}(L(G',p))\right|<\epsilon.
\]
Now, let $p':V\to\Rea^d$ be an embedding satisfying $\|p(u)-p'(u)\|< \delta$ for all $u\in V$. Then, we have
\[
\lambda_{\binom{d+1}{2}+1}(L(G',p'))> \lambda_{\binom{d+1}{2}+1}(L(G',p))-\epsilon = \lambda_{\binom{d+1}{2}+1}(L(G,p))-\epsilon.
\]
Finally, by Theorem \ref{thm:edge_removal_interlacing}, we obtain
\[
\lambda_{\binom{d+1}{2}+1}(L(G,p')) \geq \lambda_{\binom{d+1}{2}+1}(L(G',p'))> \lambda_{\binom{d+1}{2}+1}(L(G,p))-\epsilon.
\]
Thus, $\tilde{a}_d(G)\geq a_d(G)$, as wanted.
\end{proof}

\section{The $d$-dimensional algebraic connectivity of the simplex graph}
\label{sec:spec_simplex}

It is known (see \cite{jordan2020rigidity}) that $a_1(K_{n})=n$ and $a_2(K_n)=n/2$. In particular, $a_2(K_3)=3/2$. In this section we prove the following:

\connectivityofsimplex*

\subsection{Lower bound: $a_d(K_{d+1})\ge 1$}

Recall that $p^{\triangle}:V\to \Rea^d$ is the embedding that maps each vertex of $K_{d+1}$ to one of the vertices of the regular $d$-dimensional simplex.
The lower bound follows from the following result, conjectured in~\cite[Conj.1]{jordan2020rigidity}:

\simplexspectrum*
\begin{proof}
Let $K_{d+1}=(V,E)$, where $V=[d+1]$ and $E=\binom{[d+1]}{2}$. Since the angle between every two intersecting edges of the regular simplex is $60\degree$, we have, by Lemma \ref{lemma:down_laplacian},
\[
    L^{-}(K_{d+1},p^{\triangle})(e_1,e_2)= \begin{cases}
                    2 & \text{ if } e_1=e_2,\\
                    \frac{1}{2} & \text{ if } |e_1\cap e_2|=1,\\
                    0 & \text{ otherwise,}
                    \end{cases} 
\]
for every $e_1,e_2\in E$. We can write
\[
    L^{-}(K_{d+1},p^{\triangle})= I+ \frac{1}{2} Q,
\]
where $Q\in \Rea^{|E|\times|E|}$ is defined by
\[
    Q(e_1,e_2)= \begin{cases}
                    2 & \text{ if } e_1=e_2,\\
                   1 & \text{ if } |e_1\cap e_2|=1,\\
                    0 & \text{ otherwise,}
                    \end{cases} 
\]
for every $e_1,e_2\in E$. 

Let $M\in \Rea^{(d+1)\times|E|}$ be the signless incidence matrix of $K_{d+1}$, defined by
\[
    M(i,e)=\begin{cases}
                    1 & \text{ if } i\in e,\\
                    0 & \text{ otherwise,}
                    \end{cases} 
\]
for $i\in V=[d+1]$ and $e\in E$. Then, we have
\[
    Q= M^{t} M.
\]
Let
\[
    \tilde{Q}= M M^{t}\in \Rea^{(d+1)\times(d+1)}.
\]
The matrix $\tilde{Q}$ is the signless Laplacian of $K_{d+1}$, namely:
\[
    \tilde{Q}(i,j)=\begin{cases}
                    d & \text{ if } i=j,\\
                    1 & \text{ otherwise.}
                    \end{cases} 
\]
Therefore, $\tilde{Q}=(d-1)I+J$, where $J$ is the all-ones matrix. The spectrum of $J$ is $\{0^{(d)},d+1^{(1)}\}$; therefore, the spectrum of $\tilde{Q}$ is $\{d-1^{(d)},2d^{(1)}\}$. Since the non-zero eigenvalues of $Q$ and $\tilde{Q}$ are the same, the spectrum of $Q$ is $\{0^{((d-2)(d+1)/2)},d-1^{(d)},2d^{(1)}\}$.
Thus, the spectrum of $L^{-}(K_{d+1},p^{\triangle})$ is 
\[
\left\{1^{((d-2)(d+1)/2)},\frac{d+1}{2}^{(d)},d+1^{(1)}\right\}.
\]
Finally, since the non-zero eigenvalues of $L^{-}(K_{d+1},p^{\triangle})$ and $L(K_{d+1},p^{\triangle})$ are the same, the spectrum of $L(K_{d+1},p^{\triangle})$ is
\[
\left\{0^{(d(d+1)/2)},1^{((d-2)(d+1)/2)},\frac{d+1}{2}^{(d)},d+1^{(1)}\right\},
\]
as wanted.
\end{proof}

As a consequence of Theorem \ref{thm:simplex}, we obtain $a_d(K_{d+1})\geq 1$ for all $d\geq 3$. We are left to show that $a_d(K_{d+1})\leq 1$.
Before doing that, let us remark that for $d\ge 3$ there are embeddings $p\neq p^{\triangle}$ such that $\lambda_{\binom{d+1}{2}+1}(L(K_{d+1},p))=1$:
\begin{proposition}\label{prop:nonregular}
Let $p_1,p_2,p_3\in \Rea^2$ be the vertices of an equilateral triangle with sides of length $1$ centered at the origin. For $h\geq 0$, let $p_h:[4]\to \Rea^3$ be defined by
\[
    p_h(i)=\begin{cases}
            (p_i,0) &  \text{ if } i\in[3],\\
            (0,h) & \text{ if } i=4.
        \end{cases}
\]
Then, the spectrum of $L(K_4,p_h)$ is
\[
    \left\{0^{(6)},1^{(2)},\left(3-\left(h^2+\frac{1}{3}\right)^{-1}\right)^{(1)},\left(\frac{3}{2}+\frac{1}{2}\left(h^2+\frac{1}{3}\right)^{-1}\right)^{(2)},4^{(1)}\right\}.
\]
In particular, for $h\geq \frac{1}{\sqrt{6}}$, we have $\lambda_7(L(K_4,p_h))=1$.
\end{proposition}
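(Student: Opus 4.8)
The plan is to reduce the whole computation to the $6\times 6$ lower stiffness matrix $L^{-}(K_4,p_h)$. Since $K_4$ has six edges and we work in $\Rea^3$, the rigidity matrix $R=R(K_4,p_h)$ is $12\times 6$, so that $L(K_4,p_h)=RR^{t}$ and $L^{-}(K_4,p_h)=R^{t}R$ share their nonzero eigenvalues with multiplicities; consequently the spectrum of $L(K_4,p_h)$ is exactly that of $L^{-}(K_4,p_h)$ together with the eigenvalue $0$ with multiplicity $12-6=6$. It therefore suffices to show that the six eigenvalues of $L^{-}(K_4,p_h)$ are $1^{(2)}$, $\,3-(h^2+\tfrac13)^{-1}$, $\,\bigl(\tfrac32+\tfrac12(h^2+\tfrac13)^{-1}\bigr)^{(2)}$ and $4$.

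First I would read off the entries of $L^{-}(K_4,p_h)$ via Lemma~\ref{lemma:down_laplacian}. Placing the unit equilateral triangle at circumradius $1/\sqrt3$ in the plane $z=0$ and the apex at $(0,0,h)$, every side edge has length $\ell:=\sqrt{h^2+\tfrac13}$, so $(h^2+\tfrac13)^{-1}=\ell^{-2}$. A short computation of the three possible edge-angle cosines gives $\tfrac12$ for two triangle edges meeting at a triangle vertex, $\tfrac{1}{2\ell}$ for a triangle edge and a side edge meeting at a triangle vertex, and $(h^2-\tfrac16)\ell^{-2}$ for two side edges meeting at the apex. Together with the diagonal value $2$, these determine $L^{-}(K_4,p_h)$ completely.

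The key structural observation is the cyclic symmetry $1\mapsto2\mapsto3\mapsto1$, $4\mapsto4$ of the configuration. Ordering the edges as $\{1,2\},\{1,3\},\{2,3\}$ followed by the three side edges indexed so that the $i$-th side edge joins the apex to the vertex opposite the $i$-th triangle edge, the matrix takes the block form
\[
L^{-}(K_4,p_h)=\begin{pmatrix} 2I+\tfrac12(J-I) & \tfrac{1}{2\ell}(J-I) \\[2pt] \tfrac{1}{2\ell}(J-I) & 2I+c(J-I)\end{pmatrix},\qquad c=(h^2-\tfrac16)\ell^{-2},
\]
where $J$ is the $3\times 3$ all-ones matrix. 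Every block is a polynomial in $J$, hence all four are simultaneously diagonalized by the eigenvectors of $J$: the all-ones vector (eigenvalue $3$) and its $2$-dimensional orthogonal complement (eigenvalue $0$). This splits $L^{-}$ into a $2\times2$ matrix $M_s$ on the all-ones part and a $2\times2$ matrix $M_a$ on the complement, the latter with multiplicity $2$. Evaluating each block at $J=3$ and at $J=0$, and simplifying $2+2c=4-\ell^{-2}$ and $2-c=1+\tfrac12\ell^{-2}$, I would obtain
\[
M_s=\begin{pmatrix} 3 & \ell^{-1} \\ \ell^{-1} & 4-\ell^{-2}\end{pmatrix},\qquad M_a=\begin{pmatrix} \tfrac32 & -\tfrac{1}{2\ell} \\ -\tfrac{1}{2\ell} & 1+\tfrac12\ell^{-2}\end{pmatrix}.
\]
A one-line trace/determinant check then shows $M_s$ has eigenvalues $4$ and $3-\ell^{-2}$ (each with multiplicity $1$), while $M_a$ has eigenvalues $1$ and $\tfrac32+\tfrac12\ell^{-2}$ (each with multiplicity $2$), which is exactly the claimed nonzero spectrum.

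Finally, for the ``in particular'' assertion, the spectral gap is $\lambda_{\binom{4}{2}+1}=\lambda_7$, i.e.\ the smallest nonzero eigenvalue. Among the four nonzero values one has $\tfrac32+\tfrac12\ell^{-2}\ge\tfrac32>1$ and $4>1$ for every $h$, so $\lambda_7=1$ precisely when $3-\ell^{-2}\ge1$, that is $\ell^2\ge\tfrac12$, equivalently $h^2\ge\tfrac16$; hence $\lambda_7(L(K_4,p_h))=1$ for all $h\ge 1/\sqrt6$. The only genuinely delicate step is choosing the edge ordering that makes the cyclic symmetry manifest and turns every block into a polynomial in $J$; once that is in place the block-diagonalization and the two $2\times2$ eigenvalue computations are entirely routine.
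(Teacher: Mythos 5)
Your proof is correct, and it diverges from the paper's exactly at the step where the real work happens. Up to the explicit form of $L^{-}(K_4,p_h)$ you follow the same route as the paper: reduce to the $6\times 6$ lower stiffness matrix (the nonzero spectra of $RR^t$ and $R^tR$ agree, which accounts for $0^{(6)}$ --- and your multiset bookkeeping remains valid even at $h=0$, where $3-\ell^{-2}=0$ makes $L^{-}$ itself singular), and read off its entries via Lemma~\ref{lemma:down_laplacian}; your three cosines $\tfrac12$, $\tfrac{1}{2\ell}$ and $(h^2-\tfrac16)\ell^{-2}=1-\tfrac{1}{2\ell^2}$ agree with the paper's. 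At that point the paper simply states that the spectrum of the resulting $6\times 6$ matrix ``can now be computed with the help of a computer algebra system,'' whereas you diagonalize it by hand: pairing each triangle edge with the opposite apex edge makes all four $3\times 3$ blocks polynomials in $J$, so the $\ZZ_3$ symmetry splits $\Rea^6$ into $\langle\mathbf 1\rangle^{\oplus 2}$ and $(\mathbf 1^{\perp})^{\oplus 2}$, on which the matrix acts as $M_s$ and as $M_a$ with multiplicity two. I verified your $2\times 2$ data: $\mathrm{Tr}\,M_s=7-\ell^{-2}$ and $\det M_s=12-4\ell^{-2}$ give eigenvalues $4$ and $3-\ell^{-2}$, while $\mathrm{Tr}\,M_a=\tfrac52+\tfrac12\ell^{-2}$ and $\det M_a=\tfrac32+\tfrac12\ell^{-2}$ give $1$ and $\tfrac32+\tfrac12\ell^{-2}$, each doubled --- exactly the claimed spectrum; the ``in particular'' clause is also handled correctly, including the endpoint $h=1/\sqrt6$ where $3-\ell^{-2}=1$. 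As for what each approach buys: the paper's CAS invocation is shorter but not checkable on paper, while your symmetry argument makes the proposition self-contained and explains structurally why two eigenvalues appear with multiplicity two (they live on the two-dimensional isotypic component of the rotation action), at the modest cost of the edge-ordering bookkeeping.
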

\begin{proof}
Denote the edges of the tetrahedron formed by the image of $p_h$ by $e_{ij}$, $1\leq i\neq j\leq 4$.
Let $\ell$ be the length of the edges $e_{i4}$ (for $i\in[3]$). Note that $\ell=\sqrt{h^2+\frac{1}{3}}$. It is easy to check that for three distinct indices $i,j,k$
\[
    \cos(\theta(e_{ij},e_{jk}))=\begin{cases}
    \frac{1}{2} & \text{ if } i,j,k\in [3],\\
    \frac{1}{2\ell} & \text{ if } i,j\in [3], k=4,\\
    1-\frac{1}{2\ell^2} & \text{ if } i,k\in[3], j=4.
    \end{cases}
\]
Therefore, by Lemma \ref{lemma:down_laplacian}, we have
\[
    L^{-}(K_4,p_h)=\begin{pmatrix}
  2 & \frac{1}{2} & \frac{1}{2} & \frac{1}{2\ell} & \frac{1}{2\ell} & 0\\[5pt]
  \frac{1}{2} & 2 & \frac{1}{2} & \frac{1}{2\ell} & 0 & \frac{1}{2\ell}\\[5pt]
  \frac{1}{2} & \frac{1}{2} & 2 & 0 & \frac{1}{2\ell} & \frac{1}{2\ell}\\[5pt]
  \frac{1}{2\ell} & \frac{1}{2\ell} & 0 & 2 & 1-\frac{1}{2\ell^2} & 1-\frac{1}{2\ell^2}\\[5pt]
  \frac{1}{2\ell} & 0 & \frac{1}{2\ell} & 1-\frac{1}{2\ell^2} & 2 & 1-\frac{1}{2\ell^2}\\[5pt]
  0 & \frac{1}{2\ell} & \frac{1}{2\ell} & 1-\frac{1}{2\ell^2} & 1-\frac{1}{2\ell^2} & 2
\end{pmatrix}.
\]
The spectrum of $L^{-}(K_4,p_h)$ can now be computed with the help of a computer algebra system. We obtain the following eigenvalues:
\[
    1^{(2)},\left(3-\ell^{-2}\right)^{(1)},\left(\frac{3}{2}+\frac{1}{2}\ell^{-2}\right)^{(2)},4^{(1)}.
\]
Since the non-zero eigenvalues of $L^{-}(K_4,p_h)$ are the same as those of $L(K_4,p_h)$, and $\ell^2=h^2+\frac{1}{3}$, the spectrum of $L(K_4,p_h)$ is  \[
  \left\{  0^{(6)},1^{(2)},\left(3-\left(h^2+\frac{1}{3}\right)^{-1}\right)^{(1)},\left(\frac{3}{2}+\frac{1}{2}\left(h^2+\frac{1}{3}\right)^{-1}\right)^{(2)},4^{(1)}\right\},
\]
as wanted. Finally, note that for $h\geq \frac{1}{\sqrt{6}}$ we obtain 
\[
3-(h^2+\frac{1}{3})^{-1}\geq 3-2 =1,
\]
and therefore $\lambda_7(K_4,p_h)=1$.
\end{proof}

\subsection{Upper bound: $a_d(K_{d+1})\le 1$}
First, we prove the $3$-dimensional case:

\begin{proposition}
\[
    a_3(K_4)= 1.
\]
\end{proposition}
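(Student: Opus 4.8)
The lower bound $a_3(K_4)\ge 1$ is already in hand: by Theorem~\ref{thm:simplex_spectrum} with $d=3$, the value $1$ is an eigenvalue of $L(K_4,p^{\triangle})$, so $\lambda_7(L(K_4,p^{\triangle}))=1$. Hence the entire task is the upper bound $a_3(K_4)\le 1$, i.e. $\lambda_7(L(K_4,p))\le 1$ for every $p\colon V(K_4)\to\Rea^3$. First I would reduce: by Lemma~\ref{lemma:a_d_for_embeddings} it suffices to treat injective $p$, and if the four image points are affinely dependent then $\rank R(K_4,p)<6$, so $\lambda_7=0\le 1$. Thus I may assume the image is a non-degenerate tetrahedron, in which case $L^{-}(K_4,p)$ is a positive-definite $6\times 6$ matrix whose spectrum is the nonzero spectrum of $L(K_4,p)$. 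The goal becomes: produce a nonzero edge-vector $x$ with $\|R x\|\le\|x\|$, equivalently show that $L^{-}(K_4,p)-I$ is not positive definite.

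The structural device I would use is the partition of the six edges into the three opposite (disjoint) pairs. By Lemma~\ref{lemma:down_laplacian} the entry of $L^{-}$ on each opposite pair vanishes, so if $S$ denotes the involution swapping the two edges of each pair then $\operatorname{tr}(L^{-}S)=0$, and consequently $L^{-}$ has trace $6$ on the $(+1)$-eigenspace $W$ of $S$ (the vectors constant on each pair, $\dim W=3$). Writing $W=V_0\oplus\langle\mathbf 1\rangle$ orthogonally, where $V_0=\{x:\ x\text{ is constant on each opposite pair and }\sum_e x_e=0\}$, a short computation gives $\operatorname{tr}\!\big(L^{-}|_{V_0}\big)=6-\tfrac16\,\mathbf 1^{t}L^{-}\mathbf 1$ with $\mathbf 1^{t}L^{-}\mathbf 1=\sum_{w}\big\|\sum_{v\ne w}d_{wv}\big\|^2$. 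Since $\dim V_0=2$, whenever $\mathbf 1^{t}L^{-}\mathbf 1\ge 24$ the two eigenvalues of $L^{-}|_{V_0}$ sum to at most $2$, so the smaller is $\le 1$ and we are done; the subspace $V_0$ is exactly the eigenvalue-$1$ eigenspace at $p^{\triangle}$, which sits precisely on the threshold $\mathbf 1^{t}L^{-}\mathbf 1=24$.

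For the complementary ``spread'' tetrahedra I would use explicit small-support certificates read off from principal submatrices via Cauchy interlacing (Theorem~\ref{thm:cauchy}). For a Hamiltonian path through $i,j,k,l$ in this order, the two end edges $\{i,j\}$ and $\{k,l\}$ form an opposite pair, so the corresponding $3\times 3$ principal submatrix of $L^{-}$ is tridiagonal with diagonal $2$ and off-diagonal entries $\cos\theta_j,\cos\theta_k$ (the bend angles), whose least eigenvalue is $2-\sqrt{\cos^2\theta_j+\cos^2\theta_k}$; interlacing then forces $\lambda_{\min}(L^{-})\le 1$ as soon as $\cos^2\theta_j+\cos^2\theta_k\ge 1$ for some Hamiltonian path. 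The plan is to prove a geometric dichotomy on the edge-angles of the tetrahedron: every configuration either has $\mathbf 1^{t}L^{-}\mathbf 1\ge 24$ (handled by $V_0$) or admits a path whose bend-angle inequality fires.

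The hard part is making such a dichotomy exhaustive, and this is where the genuine difficulty lies. By Proposition~\ref{prop:nonregular} the maximal value $1$ is attained not only at $p^{\triangle}$ but along a whole one-parameter family, so $\lambda_{\min}(L^{-})=1$ on a positive-dimensional set and no single linear test vector can certify $\le 1$ for all configurations at once. Moreover there are genuinely distinct tight configurations requiring distinct certificates: besides the regular simplex (captured by $V_0$), a tetrahedron with a right-angled vertex has $\mathbf 1^{t}L^{-}\mathbf 1<24$ and admits no firing path, yet attains $\lambda_{\min}=1$ through a certificate $x=(-Bv,v)$ arising from an exact identity $A=B^{t}B$, where $A$ is the Gram block of the three edges opposite the right-angled vertex and $B$ is their coupling to the three edges at that vertex. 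Consequently I expect the honest route to be either a careful case analysis that exhausts the five-dimensional space of tetrahedron shapes by the above certificates together with this orthogonal-star family, or a direct verification — best carried out with a computer algebra system, in the spirit of Proposition~\ref{prop:nonregular} — that $L^{-}(K_4,p)-I$ fails to be positive definite, expressed as a polynomial inequality in the six edge lengths.
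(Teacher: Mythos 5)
Your lower bound, and your observation that since $|E|=6=3\cdot 4-\binom{4}{2}$ the quantity $\lambda_7(L(K_4,p))$ equals $\lambda_{\min}(L^{-}(K_4,p))$, both match the paper, and your partial certificates are individually sound: the compression-trace bound on $V_0$ is valid when $\mathbf 1^t L^{-}\mathbf 1\ge 24$ (and the regular simplex does sit exactly at that threshold), and the tridiagonal interlacing certificate along a Hamiltonian path is correct. But there is a genuine gap exactly where you flag it: you never prove your dichotomy is exhaustive, you yourself exhibit a family (tetrahedra with a right-angled vertex) where neither of your first two tests fires, and you close by deferring to an unexecuted case analysis over the five-dimensional shape space or a computer-algebra verification. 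As written, the upper bound $a_3(K_4)\le 1$ is not established; a proof that ends in ``I expect the honest route to be a careful case analysis'' is a plan, not a proof.

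The missing idea --- and the paper's entire upper-bound argument --- is a single \emph{length-weighted} test vector that works for every $p$ with no case distinctions. Pair the six edges into opposite pairs $(e_1,e_1'),(e_2,e_2'),(e_3,e_3')$ with image lengths $\ell_i,\ell_i'$, relabel so that $\ell_3^2+\ell_3'^2\le\min\{\ell_1^2+\ell_1'^2,\;\ell_2^2+\ell_2'^2\}$, and set
\[
x=\ell_1 1_{e_1}+\ell_1' 1_{e_1'}-\ell_2 1_{e_2}-\ell_2' 1_{e_2'}.
\]
Opposite pairs contribute zero off-diagonal entries of $L^{-}$ (as in your involution observation), so the only cross terms are the four products $\ell_a\ell_b\cos\theta$ between the two supported pairs, and the law of cosines $\ell_{ij}\ell_{ik}\cos\theta(e_{ij},e_{ik})=\tfrac12(\ell_{ij}^2+\ell_{ik}^2-\ell_{jk}^2)$ telescopes them to give
\[
x^tL^{-}(K_4,p)\,x=2\left(\ell_3^2+\ell_3'^2\right)\le \ell_1^2+\ell_1'^2+\ell_2^2+\ell_2'^2=\|x\|^2,
\]
whence $\lambda_7(L(K_4,p))\le 1$ directly (no reduction to injective $p$ is even needed, thanks to the convention $d_{uv}=0$ for coincident points). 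The $p$-dependent weights $\ell$ are precisely what your approach lacks: with vectors constant on opposite pairs (your $V_0$) the cross terms do not telescope, which is why you are forced into a dichotomy and why the distinct tight families you correctly identified (the regular simplex, the $p_h$ family of Proposition~\ref{prop:nonregular}, right-angled stars) each seemed to demand a separate certificate --- the weighted vector sits at equality on all of them at once.
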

\begin{proof}
Let $p:V\to \Rea^3$. Note that, since $|E|=6=dn-\binom{d+1}{2}$ (for $d=3$ and $n=4$), 
\[
    \lambda_7(L(K_4,p))=\lambda_{\binom{3+1}{2}+1}(L(K_4,p))
\]
is equal to the minimal eigenvalue of $L^{-}(K_4,p)$.
Thus, for every $0\neq x\in \Rea^6$
\[
 \lambda_7(L(K_4,p)) \leq \frac{x^t L^{-}(K_4,p) x}{\|x\|^2}.
\]
Let $e_{1},e_1',e_2,e_2',e_3,e_3'$ be the edges of $K_4$, such that $e_i\cap e_i'=\emptyset$ for all $i$.
Let $\ell_1,\ell_1',\ell_2,\ell'_2,\ell_3,\ell'_3$ be the lengths of the images of $e_{1},e_1',e_2,e_2',e_3,e_3'$ (that is, if $e_i=\{u,v\}$, $\ell_i=\|p(u)-p(v)\|$).

Assume without loss of generality that $\ell_3^2+\ell_3^{'2} \leq \min\{\ell_1^2+\ell_1^{'2} ,\ell_2^2+\ell_2^{'2}\}$. Let
\[
    x= \ell_1 1_{e_1}+\ell_1' 1_{e_1'}-\ell_2 1_{e_2}-\ell_2' 1_{e_2'}.
\]
Then, $\|x\|^2=\ell_1^2+\ell_1^{'2}+\ell_2^2+\ell_2^{'2}$, and
\begin{multline*}
    x^t L^{-}(K_4,p) x
    =2\|x\|^2 - 2(\ell_1\ell_2\cos(\theta(e_1,e_2))+\ell_1\ell_2'\cos(\theta(e_1,e'_2))\\+\ell_1'\ell_2\cos(\theta(e'_1,e_2)) 
    +\ell_1'\ell_2'\cos(\theta(e'_1,e'_2))).
\end{multline*}
By the law of cosines, we obtain
\[
    x^t L^{-}(K_4,p) x
    =2\|x\|^2 +2(\ell_3^2+\ell_3^{'2}- \ell_1^2-\ell_1^{'2}-\ell_2^2-\ell_2^{'2}) = 2(\ell_3^2+\ell_3^{'2}).
\]
So,
\[
\frac{x^t L^{-}(K_4,p) x}{\|x\|^2}= \frac{2(\ell_3^2+\ell_3^{'2})}{\ell_1^2+\ell_1^{'2}+\ell_2^2+\ell_2^{'2}}\leq 1.
\]
Therefore, we obtain
\[
 \lambda_7(L(K_4,p)) \leq 1.
\]
\end{proof}

Finally, we show that the bound for general $d$ follows from the $d=3$ case:

\begin{proposition}
For all $d\geq 4$,
\[
    a_d(K_{d+1})\leq a_3(K_4)=1.
\]
\end{proposition}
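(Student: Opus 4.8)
The plan is to reduce the $d$-dimensional problem to the three-dimensional base case $a_3(K_4)=1$, exploiting the fact that the lower stiffness matrix is determined by the \emph{intrinsic} geometry of the point configuration and is therefore insensitive to the ambient dimension. By Lemma~\ref{lemma:a_d_for_embeddings} it suffices to bound $\lambda_{\binom{d+1}{2}+1}(L(K_{d+1},p))$ for injective maps $p:V(K_{d+1})\to\Rea^d$. First I would note that $|E(K_{d+1})|=\binom{d+1}{2}=d(d+1)-\binom{d+1}{2}$, so exactly as in the $d=3$ proposition the nonzero spectra of $L$ and $L^{-}$ coincide while $L^{-}(K_{d+1},p)$ has size $\binom{d+1}{2}$ and $L(K_{d+1},p)$ has $d(d+1)=2\binom{d+1}{2}$ eigenvalues, at least $\binom{d+1}{2}$ of which vanish; hence the spectral gap $\lambda_{\binom{d+1}{2}+1}(L(K_{d+1},p))$ equals the smallest eigenvalue $\lambda_{\min}(L^{-}(K_{d+1},p))$. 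It therefore suffices to exhibit, for every $p$, a nonzero test vector $x$ with Rayleigh quotient $x^tL^{-}(K_{d+1},p)x/\|x\|^2\le 1$.

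The key structural observation is that, by Lemma~\ref{lemma:down_laplacian}, each entry of $L^{-}$ indexed by a pair of edges depends only on the $p$-images of the at most four endpoints involved, and only through their mutual distances and angles. Fix any four vertices $S=\{v_1,v_2,v_3,v_4\}$ of $K_{d+1}$ and let $y$ range over vectors on the six edges spanned by $S$. Then the principal submatrix of $L^{-}(K_{d+1},p)$ indexed by these six edges is precisely $L^{-}(K_4,p|_S)$, where $K_4$ denotes the complete graph on $S$. Consequently, extending $y$ by zeros to a vector $x$ on $E(K_{d+1})$ gives $x^tL^{-}(K_{d+1},p)x=y^tL^{-}(K_4,p|_S)y$ and $\|x\|=\|y\|$.

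Now I would invoke the dimension reduction: the four points $p(v_1),\dots,p(v_4)$ lie in an affine subspace of $\Rea^d$ of dimension at most $3$. Applying an isometry of $\Rea^d$ carrying this subspace into a fixed copy of $\Rea^3\subset\Rea^d$ produces an embedding $p':S\to\Rea^3$ with the same pairwise distances, hence the same unit directions $d_{ij}$ up to a single global orthogonal transformation, and therefore the same pairwise angles. Thus $L^{-}(K_4,p|_S)=L^{-}(K_4,p')$. Since $a_3(K_4)=1$ and (by the $d=3$ identification of the gap with the minimal eigenvalue of the $6\times6$ lower stiffness matrix) $\lambda_{\min}(L^{-}(K_4,p'))=\lambda_7(L(K_4,p'))\le 1$, there is a unit vector $y$ with $y^tL^{-}(K_4,p')y\le 1$. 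For the corresponding extension $x$ we then get $x^tL^{-}(K_{d+1},p)x=y^tL^{-}(K_4,p')y\le 1=\|x\|^2$, so by the variational principle $\lambda_{\min}(L^{-}(K_{d+1},p))\le 1$. Taking the supremum over $p$ and combining with the first paragraph yields $a_d(K_{d+1})\le a_3(K_4)=1$.

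I expect the only genuinely delicate step to be the dimension-reduction itself, namely checking carefully that the relevant block of $L^{-}$ is truly intrinsic and is left unchanged when the four-point configuration is isometrically re-embedded from $\Rea^d$ into $\Rea^3$, so that the sharp three-dimensional bound may legitimately be applied. The identity between the spectral gap of $L$ and $\lambda_{\min}(L^{-})$, which holds precisely because $K_{d+1}$ is rigidity-critical (its edge count meets the generic rank bound with equality), must also be stated cleanly, but it is a verbatim transcription of the $d=3$ argument.
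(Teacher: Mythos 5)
Your proof is correct, but it takes a genuinely different route from the paper's. The paper proves the monotonicity statement $a_d(K_{d+1})\leq a_{d-1}(K_d)$ and inducts down to $d=3$: it deletes the $d$ edges at one vertex, applies the interlacing result (Theorem~\ref{thm:edge_removal_interlacing}) $d$ times to shift the eigenvalue index from $\binom{d+1}{2}+1$ to $\binom{d+1}{2}+d+1$, and then observes that the remaining $d$ points lie in an affine hyperplane, so that $L^{-}(G,p)=L^{-}(K_d,p')$ and the spectra match up after accounting for the $2d$ extra ambient coordinates. You instead perform a one-shot reduction to $d=3$: you exploit the coincidence $|E(K_{d+1})|=\binom{d+1}{2}=d(d+1)-\binom{d+1}{2}$ (i.e.\ that $K_{d+1}$ is rigidity-critical) to identify the spectral gap with $\lambda_{\min}(L^{-}(K_{d+1},p))$ --- including the degenerate case where $L^{-}$ is singular and both sides vanish, which you correctly flag at the same level of care as the paper's own $d=3$ argument --- and then bound this minimum by the Rayleigh quotient of a test vector supported on the six edges of an arbitrary $4$-subset $S$, whose principal submatrix is, by Lemma~\ref{lemma:down_laplacian}, exactly the intrinsic matrix $L^{-}(K_4,p|_S)$ of a configuration that flattens isometrically into $\Rea^3$, at which point $a_3(K_4)=1$ applies. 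In effect you use only the trivial direction of interlacing (zero-extension of test vectors for the minimal eigenvalue), whereas the paper needs the full Cauchy machinery because it works at a general eigenvalue index; the price is that your argument leans essentially on the edge-count coincidence specific to $K_{d+1}$, while the paper's route yields the monotonicity $a_d(K_{d+1})\leq a_{d-1}(K_d)$ as a statement of independent interest. One small remark: injectivity of $p$, and hence the appeal to Lemma~\ref{lemma:a_d_for_embeddings}, is not actually needed in your argument --- the identification of the gap with $\lambda_{\min}(L^{-})$ and the test-vector bound go through for arbitrary $p$ --- but invoking it is harmless.
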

\begin{proof}
Let $K_{d+1}=(V,E)$, where $V=[d+1]$ and $E=\binom{[d+1]}{2}$.
We will show that for every $d$,
\[
    a_d(K_{d+1})\leq a_{d-1}(K_{d}).
\]
Let $G$ be the graph obtained by adding an isolated vertex $v$ to $K_d$. Then, $G$ is obtained from $K_{d+1}$ by removing the $d$ edges containing $v$. Therefore, for every $p:V\to \Rea^d$, we have
by Theorem \ref{thm:edge_removal_interlacing},
\[
    \lambda_{\binom{d+1}{2}+1}(L(K_{d+1},p)) \leq \lambda_{\binom{d+1}{2}+d+1}(L(G,p)).
\]
Let $H$ be an affine hyperplane containing $p(V\setminus\{v\})$. Identify $H$ with $\Rea^{d-1}$, and let $p'=p|_{V\setminus\{v\}}: V\setminus\{v\}\to\Rea^{d-1}$.
Note that $L^{-}(G,p)=L^{-}(K_d,p')$; therefore,
the non-zero eigenvalues of $L(G,p)$ and $L(K_d,p')$ are the same. Since $L(K_d,p')\in \Rea^{(d-1)d \times (d-1)d}$ and $L(G,p)\in \Rea^{d(d+1)\times d(d+1)}$, this means in particular that
\[
\lambda_{\binom{d}{2}+1}(L(K_d,p')) =\lambda_{\binom{d}{2}+2d+1}(L(G,p))=\lambda_{\binom{d+1}{2}+d+1}(L(G,p)).
\]  
So, we obtain
\[
    \lambda_{\binom{d+1}{2}+1}(L(K_{d+1},p)) \leq \lambda_{\binom{d}{2}+1}(L(K_d,p')) \leq a_{d-1}(K_d).
\]
Since this holds for every $p:V\to \Rea^d$, we obtain
\[
   a_d(K_{d+1}) \leq a_{d-1}(K_d),
\]
as wanted. Therefore, the bound
\[
   a_d(K_{d+1}) \leq a_{3}(K_4)=1
\]
follows by induction on $d$.
\end{proof}

\section{Spectra of Tur{\'a}n graphs
$T(n,d+1)$ and $T(n,2d)$}\label{sec:Turan}

Recall that $T(n,r)$ is the complete balanced $r$-partite graph with $n$ vertices. For $r=d+1$, we denote by $q^{\triangle}: [n]\to \Rea^d$ the mapping that maps the vertices of each of the $d+1$ parts of $T(n,d+1)$ to one of the vertices of the regular $d$-dimensional simplex. Similarly, we denote by $q^{\diamond}:[n]\to \Rea^d$ the mapping that maps the vertices of each of the $2d$ parts of $T(n,2d)$ to one of the vertices of the regular $d$-dimensional crosspolytope.
In this section we determine the spectra of  $L(T(n,d+1),q^{\triangle})$ and $L(T(n,2d),q^{\diamond})$. 
In fact, in each case we provide a basis of $\Rea^{E}$ consisting of eigenvectors of the corresponding lower stiffness matrix.

As a consequence, we obtain a lower bound on the $d$-dimensional algebraic connectivity of $K_n$ (Theorem \ref{thm:lower_bound}).

We begin with the following result of Jord\'an and Tanigawa:

\begin{lemma}[{\cite[Lemma 4.3]{jordan2020rigidity}}]\label{lemma:n_eigenvalue}
Let $p:[n]\to \Rea^d$. If $p$ is not constant, then the largest eigenvalue of $L(K_n,p)$ is $n$.
\end{lemma}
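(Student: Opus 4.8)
The plan is to work with the Rayleigh quotient of $L(K_n,p)=R(K_n,p)R(K_n,p)^t$ directly. Writing a vector $x\in\Rea^{dn}$ as $x=\sum_{u\in[n]} 1_u\otimes x_u$ with each $x_u\in\Rea^d$, I would first record the identity
\[
x^t L(K_n,p)\,x = \|R(K_n,p)^t x\|^2 = \sum_{\{u,v\}\in\binom{[n]}{2}}\bigl((x_u-x_v)\cdot d_{uv}\bigr)^2,
\]
which follows since the column of $R(K_n,p)$ indexed by $\{u,v\}$ is $\bv_{u,v}=(1_u-1_v)\otimes d_{uv}$, so its inner product with $x$ equals $(x_u-x_v)\cdot d_{uv}$. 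The largest eigenvalue of $L(K_n,p)$ is the maximum of this quadratic form over unit vectors $x$.

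For the upper bound $\lambda_{\max}\le n$, I would apply Cauchy--Schwarz to each summand: since $\|d_{uv}\|\le 1$, we have $\bigl((x_u-x_v)\cdot d_{uv}\bigr)^2\le \|x_u-x_v\|^2$, and hence
\[
x^t L(K_n,p)\,x \le \sum_{\{u,v\}}\|x_u-x_v\|^2.
\]
Applying the standard complete-graph Laplacian identity coordinate by coordinate then gives $\sum_{\{u,v\}}\|x_u-x_v\|^2 = n\|x\|^2 - \bigl\|\sum_{u}x_u\bigr\|^2 \le n\|x\|^2$, so the quadratic form never exceeds $n\|x\|^2$.

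For the matching lower bound and attainment, I would exhibit an explicit maximizer: set $x_u := p(u)-\bar p$, where $\bar p=\frac1n\sum_{u}p(u)$ is the centroid of the image. Then $\sum_u x_u=0$, so the second inequality above becomes an equality; moreover $x_u-x_v=p(u)-p(v)$ is parallel to $d_{uv}$ (and both vanish when $p(u)=p(v)$), so each Cauchy--Schwarz step is an equality as well. Consequently $x^t L(K_n,p)\,x=n\|x\|^2$. Since $p$ is not constant, $x\neq 0$, so this vector witnesses that the maximum value $n$ is attained; thus $n$ is the largest eigenvalue of $L(K_n,p)$.

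I expect no serious obstacle here; the only point requiring care is the bookkeeping for degenerate pairs with $p(u)=p(v)$ (for which $d_{uv}=0$), which must be checked to contribute zero to both sides of the Cauchy--Schwarz step, so that the centered-position vector still achieves equality there. It is also worth confirming explicitly that this single $x$ saturates both inequalities simultaneously --- the parallelism $x_u-x_v\parallel d_{uv}$ for the first and the centering $\sum_u x_u=0$ for the second --- which is precisely what the choice $x_u=p(u)-\bar p$ arranges.
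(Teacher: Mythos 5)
Your proof is correct, but note that the paper itself offers no proof of this lemma: it is quoted from Jord\'an and Tanigawa, and the surrounding text only remarks that $p$, viewed as a vector in $\Rea^{dn}$, is an eigenvector of $L(K_n,p)$ with eigenvalue $n$, with corresponding $L^{-}$-eigenvector $\phi(\{i,j\})=\|p(i)-p(j)\|$. Against that, your argument is a self-contained alternative. Its attainment half is essentially the paper's remark in variational clothing: your maximizer $x_u=p(u)-\bar p$ is the centered embedding, and saturating both of your inequalities simultaneously is equivalent to the eigenvector equation $L(K_n,p)x=nx$; in fact your centering quietly repairs a small imprecision in the remark, since for uncentered $p$ one computes $(L(K_n,p)p)_u=n\,p(u)-\sum_v p(v)$, so $p$ itself is an eigenvector only after subtracting the translation component, which lies in the kernel. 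The upper-bound half --- Cauchy--Schwarz with $\|d_{uv}\|\le 1$ followed by the identity $\sum_{\{u,v\}}\|x_u-x_v\|^2=n\|x\|^2-\bigl\|\sum_u x_u\bigr\|^2$ --- is exactly the content the paper delegates to the citation; in matrix form it is the comparison $L(K_n,p)\preceq L_{K_n}\otimes I_d$ with the ordinary complete-graph Laplacian, whose largest eigenvalue is $n$. Your explicit check of the degenerate pairs $p(u)=p(v)$ (both sides of the Cauchy--Schwarz step vanish, consistent with the convention $d_{uv}=0$) is precisely the care required, and it is handled correctly. What your route buys is independence from the external reference, at the modest cost of re-deriving a standard Laplacian identity; the conclusion and the extremal vector are the same as in the cited source.
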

In \cite{jordan2020rigidity} it was shown that $p$ itself (when considered as a vector in $\Rea^{d n}$) is an eigenvector of $L(K_n,p)$ with eigenvalue $n$. The corresponding eigenvector for $L^{-}(K_n,p)$ is $\phi=R(G,p)^{t} p$, which satisfies
\[
    \phi(\{i,j\})= \|p(i)-p(j)\|
\]
for each $i\neq j\in[n]$.

The following result shows that for mappings $p$ satisfying certain ``spherical symmetry", $n/2$ is an eigenvalue of $L(K_n,p)$ of high multiplicity:

\begin{proposition}\label{prop:large_eigenvalues_balanced_p}
Let $p:[n]\to\Rea^d$ such that $\|p(i)\|=1$ for all $i\in[n]$ and $\sum_{i=1}^n p(i)=0$. Assume that the image of $p$ is of size at least $3$. Then, $n/2$ is an eigenvalue of $L(K_n,p)$ with multiplicity at least $n-1$.
\end{proposition}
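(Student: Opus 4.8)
The plan is to exhibit an explicit $(n-1)$-dimensional space of eigenvectors of $L(K_n,p)$ with eigenvalue $n/2$, working with the upper stiffness matrix $L=L(K_n,p)=R(K_n,p)R(K_n,p)^t$ regarded as acting on $(\Rea^d)^n$. Writing a vector as $x=(x_1,\dots,x_n)$ with $x_i\in\Rea^d$, a direct computation from the definition of $R$ gives
\[
(Lx)_i=\sum_{j\neq i}\bigl(d_{ij}d_{ij}^t\bigr)(x_i-x_j),
\]
where $d_{ij}=(p(i)-p(j))/\ell_{ij}$ and $\ell_{ij}=\|p(i)-p(j)\|$. The crucial consequence of the hypothesis $\|p(i)\|=1$ is that the foot of the perpendicular from the center of the sphere to any chord is the chord's midpoint; concretely, $\ell_{ij}^2=2-2\,p(i)\cdot p(j)$ yields $d_{ij}\cdot p(i)=\ell_{ij}/2$ and $d_{ij}\cdot p(j)=-\ell_{ij}/2$, so that for any scalars $c_1,\dots,c_n$,
\[
\bigl(d_{ij}d_{ij}^t\bigr)\bigl(c_ip(i)-c_jp(j)\bigr)=\tfrac{c_i+c_j}{2}\bigl(p(i)-p(j)\bigr).
\]

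Next I would test the ansatz $x_i=c_ip(i)+a$, with a scalar vector $c=(c_i)$ and a fixed shift $a\in\Rea^d$ (note $x_i-x_j=c_ip(i)-c_jp(j)$, so $a$ drops out of $Lx$). Substituting the identity above into the formula for $(Lx)_i$, expanding, and using $\sum_i p(i)=0$, I expect to obtain
\[
(Lx)_i=\tfrac{n}{2}\,c_ip(i)+\tfrac{S}{2}\,p(i)-\tfrac{P}{2},\qquad S:=\sum_i c_i,\quad P:=\sum_i c_ip(i).
\]
Imposing $S=0$ and choosing $a=-P/n$ then makes $(Lx)_i=\tfrac{n}{2}(c_ip(i)+a)=\tfrac{n}{2}x_i$. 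Thus for every $c$ with $\sum_i c_i=0$ the vector $x^{(c)}$ defined by $x^{(c)}_i=c_ip(i)-\tfrac1n\sum_j c_jp(j)$ is an eigenvector of $L$ with eigenvalue $n/2$. The point of the shift $a$ is precisely to replace the two constraints $S=0$ and $P=0$ (which the naive ansatz $x_i=c_ip(i)$ forces, yielding only $n-d-1$ vectors) by the single constraint $S=0$, which is what delivers the full multiplicity.

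Finally I would verify that the linear map $c\mapsto x^{(c)}$ is injective on the $(n-1)$-dimensional space $\{c:\sum_i c_i=0\}$, and this is the one place the hypothesis that $p$ takes at least three values is used. If $x^{(c)}=0$ then $c_ip(i)=v$ for all $i$, where $v=\tfrac1n\sum_j c_jp(j)$ is fixed; if $v=0$ then each $c_i=0$ since $p(i)\neq0$, whereas if $v\neq0$ then every $p(i)$ is a unit-norm scalar multiple of $v$, forcing the image of $p$ into $\{\pm v/\|v\|\}$ and contradicting the assumption. Hence the map is injective and the $n/2$-eigenspace has dimension at least $n-1$, as claimed. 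The computations are routine; the two steps needing care are guessing the shifted ansatz $x_i=c_ip(i)+a$ (the shift is exactly what raises the count from $n-d-1$ to $n-1$) and the injectivity argument, which is where the ``image of size at least $3$'' hypothesis is essential.
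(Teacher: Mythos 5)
Your proof is correct, and it takes a genuinely different --- in a precise sense, dual --- route from the paper's. The paper works downstairs with the lower stiffness matrix $L^{-}(K_n,p)$ on $\Rea^E$: it defines $\phi_f(\{i,j\})=(f_i+f_j)\ell_{ij}$ for $f$ with $\sum_i f_i=0$, verifies $L^{-}\phi_f=(n/2)\phi_f$ by a law-of-cosines computation (the key cancellation being $\sum_{k\neq i,j}(\ell_{jk}^2-\ell_{ik}^2)=0$, which is where both hypotheses $\|p(i)\|=1$ and $\sum_i p(i)=0$ enter), and proves injectivity of $f\mapsto\phi_f$ by choosing three vertices with pairwise distinct images. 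You instead work upstairs with $L(K_n,p)$ on $\Rea^{dn}$ via the shifted radial ansatz $x^{(c)}_i=c_ip(i)-\frac{1}{n}\sum_j c_jp(j)$, and the two constructions correspond exactly under $R^t$: one checks that $\bigl(R(K_n,p)^t x^{(c)}\bigr)(\{i,j\})=d_{ij}\cdot\bigl(c_ip(i)-c_jp(j)\bigr)=\frac{c_i+c_j}{2}\ell_{ij}$, so your eigenvectors push down precisely to the paper's $\phi_{c/2}$. What your version buys: the chord identity $d_{ij}\cdot p(i)=\ell_{ij}/2$ replaces the law-of-cosines manipulation; the translation shift $a=-P/n$ transparently explains why the multiplicity is $n-1$ rather than the $n-1-d$ the naive ansatz would give; you never need to transfer nonzero eigenvalues between $L$ and $L^{-}$; and your injectivity dichotomy ($v=0$ forces $c=0$, while $v\neq 0$ traps the image of $p$ in $\{\pm v/\|v\|\}$) is cleaner than the paper's three-point argument. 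What the paper's version buys: explicit eigenvectors of $L^{-}$, which is the form reused later when assembling full eigenbases of $L^{-}(T(n,d+1),q^{\triangle})$ and $L^{-}(T(n,2d),q^{\diamond})$. One small point to patch: since $p$ need not be injective (the intended applications $q^{\triangle}$ and $q^{\diamond}$ have many repeated points), your derivation of $d_{ij}\cdot p(i)=\ell_{ij}/2$ divides by $\ell_{ij}$; when $p(i)=p(j)$ you should note that $d_{ij}=0$ by the paper's convention, so both sides of your key identity $(d_{ij}d_{ij}^t)\bigl(c_ip(i)-c_jp(j)\bigr)=\frac{c_i+c_j}{2}\bigl(p(i)-p(j)\bigr)$ vanish and the computation goes through unchanged --- the paper handles this degenerate case explicitly, and your write-up should too.
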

\begin{proof}
Since the non-zero eigenvalues of $L(K_n,p)$ and $L^{-}(K_n,p)$ are the same, it is enough to look at $L^{-}(K_n,p)$. 

Let $f\in \Rea^n$ such that $\sum_{i=1}^n f_i=0$. 
For every $i\neq j\in[n]$, let $\ell_{ij}=\|p(i)-p(j)\|$. Let $E=E(K_n)=\{\{i,j\}:\, 1\leq i<j\leq n\}$, and let $\phi_f \in \Rea^E$ be defined by
\[
    \phi_f(\{i,j\})=(f_i+f_j) \ell_{ij}.
\]
We will show that $\phi_f$ is an eigenvector of $L^{-}(K_n,p)$ with eigenvalue $n/2$.

For $i,j,k\in[n]$, let $\theta_{ijk}$ be the angle between $p(i)-p(j)$ and $p(k)-p(j)$.
By the law of cosines, we have
\[
    \ell_{ij}\ell_{jk}\cos(\theta_{ijk})=\frac{1}{2}\left(\ell_{ij}^2+\ell_{jk}^2-\ell_{ik}^2\right).
\]
Let $I'\in \Rea^{E\times E}$ be a diagonal matrix with elements $I'_{e,e}=1$ if $e=\{i,j\}$ where $p(i)\neq p(j)$ and $I'_{e,e}=0$ otherwise.
Let $A=L^{-}(K_n,p)-2I'$.
Let $e=\{i,j\}\in E$. First, assume that $p(i)\neq p(j)$.
Then, by Lemma \ref{lemma:down_laplacian}, we have
\begin{align*}
A \phi_f(e)&= \sum_{k\neq i,j} \left(\cos(\theta_{ijk}) \phi_f(\{j,k\}) +\cos(\theta_{jik})\phi_f(\{i,k\})\right)
\\
&=\sum_{k\neq i,j} \left(\ell_{jk}\cos(\theta_{ijk})(f_j+f_k)  +\ell_{ik}\cos(\theta_{jik})(f_i+f_k)\right)
\\
&=\sum_{k\neq i,j} \left(\frac{\ell_{ij}^2+\ell_{jk}^2-\ell_{ik}^2}{2\ell_{ij}}(f_j+f_k)  +\frac{\ell_{ij}^2+\ell_{ik}^2-\ell_{jk}^2}{2\ell_{ij}}(f_i+f_k)\right)
\\
&=\sum_{k\neq i,j} \ell_{ij} f_k
+ \sum_{k\neq i,j} (f_i+f_j)\frac{\ell_{ij}}{2} +\frac{f_j-f_i}{2\ell_{ij}} \sum_{k\neq i,j}(\ell_{jk}^2-\ell_{ik}^2).
\end{align*}
Note that, since $\sum_{k=1}^n f_k=0$, we have
\[
    \sum_{k\neq i,j} f_k = -(f_i+f_j).
\]
Also, since $\|p(x)\|=1$ for all $x\in[n]$, for all $x,y\in[n]$ we have
\[
    \ell_{xy}^2= \|p(x)\|^2+\|p(y)\|^2-2 p(x)\cdot p(y)= 2-2 p(x)\cdot p(y).
\]
So, since $\sum_{k=1}^n p(k)=0$, we obtain
\begin{multline*}
    \sum_{k\neq i,j} (\ell_{jk}^2-\ell_{ik}^2) = 2(p(i)-p(j))\cdot\sum_{k\neq i,j} p(k)
    \\
    = 2(p(j)-p(i))\cdot(p(i)+p(j))= 2(\|p(j)\|^2-\|p(i)\|^2)=0.
\end{multline*}
Therefore, we obtain
\[
A \phi_f(e)= -(f_i+f_j)\ell_{ij}+\frac{n-2}{2}(f_i+f_j) \ell_{ij}= \frac{n-4}{2} \phi_f(e).
\]
So, $L^{-}(K_n,p)\phi_{f}(e)=(n/2)\phi_f(e)$.
Now, assume $p(i)=p(j)$. Note that $\phi_f(e)=(f_i+f_j)\ell_{i j}=0$. Then, since $\cos(\theta_{i j k})=0$ and $\cos(\theta_{j i k})=0$ for all $k\neq i,j$, we obtain
\[
A \phi_f(e)= \sum_{k\neq i,j} \left(\cos(\theta_{ijk}) \phi_f(\{j,k\}) +\cos(\theta_{jik})\phi_f(\{i,k\}\right)=0,
\]
and therefore $L^{-}(K_n,p)\phi_{f}(e)=0=(n/2)\phi_f(e)$.
Thus, $\phi_f$ is an eigenvector of $L^{-}(K_n,p)$ with eigenvalue $n/2$.

Finally, we will show that the dimension of the subspace
\[
    U=\left\{\phi_f:\, f\in \Rea^n, \sum_{i=1}^n f_i=0\right\}
\]
is $n-1$. Indeed, let $W=\{ f\in \Rea^n:\, \sum_{i=1}^n f_i=0\}$. Clearly $\dim(W)=n-1$. We have $U=\Phi(W)$, where $\Phi\in \Rea^{|E|\times n}$ is defined by
\[
    \Phi(e,i)=
    \begin{cases}
        \ell_{i j} & \text{ if } e=\{i,j\} \text{ for some } j\in[n],\\
        0 & \text{ otherwise.}
        \end{cases}
\]
Let $g\in \Rea^n$ such that $\Phi g=0$. Let $i\in[n]$. Since the image of $p$ is of size at least $3$, there exist $j,k\in[n]$ such that $p(i),p(j),p(k)$ are pairwise distinct. Then, we have
\begin{align*}
    0&=(\Phi g)_{\{i,j\}}= (g_i+g_j)\ell_{i j},
\\
    0&=(\Phi g)_{\{i,k\}}= (g_i+g_k)\ell_{i k},
\\
    0&=(\Phi g)_{\{j,k\}}= (g_j+g_k)\ell_{j k}.
\end{align*}
We obtain $g_i=-g_j=g_k=-g_i$. 
That is, $g_i=0$. Therefore, $g=0$. Thus, $\Phi$ has linearly independent columns, and so $\dim(U)=\dim(\Phi(W))= \dim(W)=n-1$, as wanted.

Hence, $n/2$ has multiplicity at least $n-1$ as an eigenvalue of $L^{-}(K_n,p)$ (and thus also as an eigenvalue of $L(K_n,p)$).
\end{proof}

We conjecture that for the mappings considered in Proposition \ref{prop:large_eigenvalues_balanced_p}, $n/2$ is the second largest eigenvalue:

\begin{conjecture}\label{conj:largest_eig_spherical}
Let $d\geq 3$, and let $p:[n]\to\Rea^d$ such that $\|p(i)\|=1$ for all $i\in[n]$ and $\sum_{i=1}^n p(i)=0$. Assume that the image of $p$ is of size at least $3$. Then, the second largest eigenvalue of $L(K_n,p)$ is $n/2$, and its multiplicity is exactly $n-1$.
\end{conjecture}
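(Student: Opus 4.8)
The plan is to decompose the displacement space $\Rea^{dn}$, identified with maps $g:[n]\to\Rea^d$, into $L(K_n,p)$-invariant pieces on which the spectrum is either already understood or provably below $n/2$. The stiffness operator acts by $Lg(v)=\sum_{u\neq v}\langle d_{uv},\,g(v)-g(u)\rangle\, d_{uv}$, with Rayleigh quotient $\langle Lg,g\rangle=\sum_{\{u,v\}\in E}\langle d_{uv},\,g(u)-g(v)\rangle^2$. I would first isolate the two subspaces carrying the known eigenvalues: the translations $\mathcal{T}=\{g:\ g\equiv w,\ w\in\Rea^d\}$ (contained in the kernel) and the radial displacements $\mathcal{R}=\{g:\ g(v)=c_v\,p(v),\ c\in\Rea^n\}$. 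A direct computation using $\|p(v)\|=1$ and $\sum_v p(v)=0$ gives, for $g(v)=c_v p(v)$,
\[
 Lg(v)=\frac n2\,c_v\,p(v)+\frac{1}{2}\Big(\sum_u c_u\Big)p(v)-\frac12\sum_u c_u\,p(u),
\]
so $L(\mathcal{R})\subseteq \mathcal{R}+\mathcal{T}$ while $L(\mathcal{T})=0$; hence $\mathcal{R}+\mathcal{T}$ is $L$-invariant. Since the image of $p$ has at least three points they are not all parallel, so $\mathcal{R}\cap\mathcal{T}=\{0\}$ and $\dim(\mathcal{R}+\mathcal{T})=n+d$. On this subspace the eigenvectors are exactly $\mathcal{T}$ (eigenvalue $0$, multiplicity $d$), the vector $p$ itself (eigenvalue $n$, by Lemma~\ref{lemma:n_eigenvalue}), and the $n-1$ displacements $R(K_n,p)\phi_{f}$ with $\sum_i f_i=0$ produced in Proposition~\ref{prop:large_eigenvalues_balanced_p} (eigenvalue $n/2$, multiplicity $n-1$). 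Thus $L|_{\mathcal{R}+\mathcal{T}}$ has spectrum $\{0^{(d)},(n/2)^{(n-1)},n^{(1)}\}$.

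Because $L$ is symmetric, the orthogonal complement $W=(\mathcal{R}+\mathcal{T})^{\perp}$ is also $L$-invariant, and one checks that
\[
 W=\Big\{\,g:[n]\to\Rea^d \ \Big|\ \langle g(v),p(v)\rangle=0\ \ \forall v,\ \ \sum_{v}g(v)=0\,\Big\},
\]
the \emph{tangential, balanced} displacements, of dimension $(d-1)n-d$. This space contains the infinitesimal rotations $g(v)=Ap(v)$ with $A=-A^t$, which lie in the kernel of $L$. The whole theorem therefore reduces to a single eigenvalue bound: \textbf{every eigenvalue of $L|_{W}$ is strictly smaller than $n/2$.} Granting this, $n/2$ cannot occur outside $\mathcal{R}+\mathcal{T}$, so its multiplicity is exactly $n-1$, and no eigenvalue lies in $(n/2,n)$, so $n/2$ is the second largest eigenvalue, as claimed.

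To attack the reduced statement I would make the tangency explicit. For $g\in W$, since $\langle p(u),g(u)\rangle=0$ and $d_{uv}=(p(u)-p(v))/\ell_{uv}$ with $\ell_{uv}^2=2(1-\langle p(u),p(v)\rangle)$, one gets
\[
 \langle d_{uv},\,g(u)-g(v)\rangle=-\frac{\langle p(v),g(u)\rangle+\langle p(u),g(v)\rangle}{\ell_{uv}},
\]
so the target inequality $\langle Lg,g\rangle<\tfrac n2\|g\|^2$ becomes
\[
 \sum_{u\neq v}\frac{\big(\langle p(v),g(u)\rangle+\langle p(u),g(v)\rangle\big)^2}{1-\langle p(u),p(v)\rangle}\ <\ 2n\sum_{v}\|g(v)\|^2 .
\]
Writing $p(v)=\langle p(u),p(v)\rangle\,p(u)+p(v)'$ with $p(v)'\perp p(u)$ and using $g(u)\perp p(u)$ shows each numerator is controlled by $(1-\langle p(u),p(v)\rangle)$ times tangential norms, cancelling the denominator; this already proves an inequality of the above shape, but with constant $8(n-1)$ in place of $2n$, losing roughly a factor of $4$.

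Closing this gap is the main obstacle, and it is exactly what makes the statement a conjecture rather than a theorem. The lossy termwise step discards the global constraint $\sum_v g(v)=0$ together with the cross-correlations between distinct vertices, so a successful proof must treat the quadratic form globally---for instance by bounding the operator $g\mapsto \frac14\sum_{u\ne v}\frac{M_{uv}^2}{1-\langle p(u),p(v)\rangle}$ (with $M_{uv}=\langle p(v),g(u)\rangle+\langle p(u),g(v)\rangle$) after projecting onto $W$, or by completing squares so that the small-denominator contributions of nearly-coincident points are seen to be self-correcting. A crucial sanity check is that any such argument \emph{must} use $d\geq 3$: for $d=2$ the crosspolytope mapping $q^{\diamond}$ of Theorem~\ref{thm:crosspolytope_spectrum} produces an eigenvalue $n/d=n/2$ inside $W$, so the bound ``$<n/2$ on $W$'' is false in the plane, and a valid proof has to break down precisely there.
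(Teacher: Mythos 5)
You should note at the outset that the statement you were asked to prove is Conjecture~\ref{conj:largest_eig_spherical}: the paper offers no proof of it, so there is nothing to compare your argument against---the question is only whether your proposal actually closes the conjecture, and by your own admission it does not. To your credit, everything you do establish checks out. The block formula for $L$, the computation $Lg(v)=\tfrac n2 c_vp(v)+\tfrac12(\sum_u c_u)p(v)-\tfrac12\sum_u c_up(u)$ on radial fields, the invariance of $\mathcal{R}+\mathcal{T}$, its dimension $n+d$ (using the image-size-$\ge 3$ hypothesis to get $\mathcal{R}\cap\mathcal{T}=\{0\}$), the identification of its spectrum as $\{0^{(d)},(n/2)^{(n-1)},n^{(1)}\}$ (your $n/2$-eigenvectors are, up to scale, $R(K_n,p)\phi_f$ with $(R\phi_f)(v)=nf_vp(v)-\sum_u f_up(u)$, consistent with Proposition~\ref{prop:large_eigenvalues_balanced_p}), the characterization of $W$ as the balanced tangential fields, and the identity $\langle d_{uv},g(u)-g(v)\rangle=-(\langle p(v),g(u)\rangle+\langle p(u),g(v)\rangle)/\ell_{uv}$ on $W$ are all correct, and the reduction is clean: since $\mathcal{R}+\mathcal{T}$ and $W$ are both $L$-invariant, the conjecture is equivalent to every eigenvalue of $L|_W$ being strictly below $n/2$. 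Your $d=2$ sanity check is also right: by Theorem~\ref{thm:crosspolytope_spectrum} (or Proposition~\ref{prop:2d}) the planar examples carry $n/2$-eigenvectors beyond the $(n-1)$-dimensional family, and by invariance these extra eigenvectors must sit inside $W$, so the reduced inequality genuinely fails for $d=2$ and any proof must exploit $d\ge 3$.

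The gap, which you correctly flag yourself, is that the reduced inequality is proved nowhere, and your one quantitative attempt falls far short. The termwise Cauchy--Schwarz step $|\langle p(v),g(u)\rangle|\le\sqrt{(1-t)(1+t)}\,\|g(u)\|\le\sqrt{2(1-t)}\,\|g(u)\|$ with $t=\langle p(u),p(v)\rangle$ yields $M_{uv}^2/(1-t)\le 4(\|g(u)\|^2+\|g(v)\|^2)$ and hence $\langle Lg,g\rangle\le 2(n-1)\|g\|^2$ on $W$---which for $n\ge 2$ is weaker even than the unconditional bound $\lambda_{\max}(L(K_n,p))=n$ from Lemma~\ref{lemma:n_eigenvalue}, so it carries no information at all. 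Moreover, this bound is insensitive to $d$, and by your own $d=2$ observation any bound below $n/2$ on $W$ cannot be insensitive to $d$; the termwise step discards exactly the ingredients (the constraint $\sum_v g(v)=0$, the correlations between $g(u)$ and $g(v)$ across pairs, and the dimension of the ambient sphere) that a real proof must use. So the proposal is a correct and potentially useful reformulation of the conjecture as a spectral bound on the balanced tangential space, but the essential analytic content remains exactly as open as the conjecture itself.
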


In the case $d=2$ we can say more:

\begin{proposition}\label{prop:2d}
Let $p:[n]\to\Rea^2$ such that $\|p(i)\|=1$ for all $i\in[n]$ and $\sum_{i=1}^n p(i)=0$. Assume that the $p$ is injective. Then, the spectrum of $L(K_n,p)$ is
\[
\left\{ 0^{(3)}, \frac{n}{2}^{(2n-4)},n^{(1)}\right\}.
\]
\end{proposition}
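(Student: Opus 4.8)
The plan is to pass to complex coordinates on the target $\Rea^2\cong\mathbb{C}$ and write $L(K_n,p)$ as an explicit real-linear operator on $\mathbb{C}^n$, whose eigenspaces can then be read off directly. Write $z_k=p(k)\in\mathbb{C}$, so that $|z_k|=1$ for all $k$ and $\sum_{k=1}^n z_k=0$, and represent an infinitesimal displacement $u\in(\Rea^2)^n$ as a vector $w=(w_1,\dots,w_n)\in\mathbb{C}^n$. The core computation is an identity for the rank-one projection $d_{ij}d_{ij}^t$ appearing in $L$: since a chord of the unit circle is perpendicular to the radius bisecting its arc, the unit direction $d_{ij}$ satisfies $e^{2i\arg d_{ij}}=-z_iz_j$, and hence, as a real-linear map on $\mathbb{C}$, $d_{ij}d_{ij}^t$ sends $w\mapsto \tfrac12 w-\tfrac12 z_iz_j\bar w$.

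First I would substitute this identity into $(Lw)_i=\sum_{j\neq i} d_{ij}d_{ij}^t(w_i-w_j)$ and simplify using the two hypotheses $\sum_k z_k=0$ and $|z_k|=1$. Setting $S=\sum_k w_k$ and $T=\sum_k z_k\bar w_k$, the sums telescope to the clean formula
\[
(Lw)_i=\frac n2\,w_i-\frac12 S+\frac12 z_i T .
\]
From this the eigenstructure is transparent. If $S=0$ and $T=0$ then $Lw=\tfrac n2 w$; since $w\mapsto S$ is complex-linear and $w\mapsto T$ is conjugate-linear, these are four real-linear conditions, so the eigenspace for $n/2$ has real dimension at least $2n-4$. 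The constant vectors $w_k\equiv c$ and the infinitesimal rotation $w_k=iz_k$ satisfy $Lw=0$ (here one uses $\sum z_k=0$ and $\sum|z_k|^2=n$), giving a kernel of dimension at least $3$; and $w_k=z_k$ gives $Lw=nw$, recovering the top eigenvector of Lemma~\ref{lemma:n_eigenvalue}.

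Finally I would close the argument by a dimension count. The eigenvalues $0$, $n/2$, $n$ are distinct, so the three subspaces above are independent, and their dimensions sum to at least $(2n-4)+3+1=2n=\dim\Rea^{2n}$. Hence every inequality is an equality: the multiplicities are exactly $3$, $2n-4$, $1$, and there are no further eigenvalues. I expect the only real work to be the first step, deriving the operator formula: the geometric identity $d_{ij}d_{ij}^t\colon w\mapsto\tfrac12 w-\tfrac12 z_iz_j\bar w$ is what makes the circle hypothesis bite, and verifying it (e.g.\ by checking the action on $d_{ij}$ and on $id_{ij}$) together with the telescoping simplification is the crux; once the formula is in hand, the spectrum falls out with no further computation. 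Note that this also refines Proposition~\ref{prop:large_eigenvalues_balanced_p} in the planar case, since $2n-4\ge n-1$ for $n\ge3$.
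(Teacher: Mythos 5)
Your proposal is correct; I checked the crux identity and it holds: for unit complex numbers $z_i\neq z_j$, writing $d_{ij}=(z_i-z_j)/|z_i-z_j|$, one has $d_{ij}^2=-z_iz_j$ (the chord--radius argument, with the sign ambiguity killed by squaring), so the rank-one projection acts real-linearly by $w\mapsto \tfrac12 w-\tfrac12 z_iz_j\bar w$, and substituting into $(Lw)_i=\sum_{j\neq i}d_{ij}d_{ij}^t(w_i-w_j)$ and using $\sum_k z_k=0$, $|z_k|=1$ does telescope to $(Lw)_i=\tfrac n2 w_i-\tfrac12 S+\tfrac12 z_iT$. Structurally, this is the same spectral decomposition as the paper's: your operator formula is precisely the paper's identity \eqref{eq:zhu}, $L(K_n,p)=\tfrac n2 I+\tfrac12\left(pp^t-xx^t-yy^t-rr^t\right)$, rewritten in complex notation ($S$ encodes the translation projectors $xx^t+yy^t$, and $z_iT$ encodes $pp^t-rr^t$), and your eigenvectors --- the constants, $iz$, and $z$ --- are exactly the paper's $x,y,r,p$. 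What genuinely differs is the execution: the paper verifies the identity block by block in real coordinates (the roughly page-long computations showing $M_{xx}=M_{xy}=M_{yx}=M_{yy}=0$), whereas your complexification reduces the whole verification to the one-line identity $d_{ij}^2=-z_iz_j$ plus a three-term telescoping sum; in both arguments this is exactly where the unit-circle hypothesis bites, but your version isolates it far more transparently. You also close with an explicit dimension count, $3+(2n-4)+1=2n$, forcing all the inequalities to be equalities, which the paper leaves implicit. Two small points to spell out in a final write-up: injectivity guarantees $z_i\neq z_j$, so $d_{ij}\neq 0$ and the projection formula applies to every pair (the paper needs the same hypothesis to divide by $1-p(i)\cdot p(j)$); and the bound $\dim\ker L\geq 3$ uses that $iz$ is not a constant vector, which holds since $p$ is injective. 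Since $L$ is symmetric and $0$, $n/2$, $n$ are distinct, the three subspaces are mutually orthogonal, so the count is airtight, and your observation that this strengthens Proposition~\ref{prop:large_eigenvalues_balanced_p} in the plane (multiplicity $2n-4\geq n-1$ for $n\geq 3$) is accurate.
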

Proposition \ref{prop:2d} extends Zhu's result \cite[Remark 3.4.1]{zhu2013quantitative}, which states that the same conclusion holds under the more restrictive assumption that $p$ maps $[n]$ to the roots of unity of order $n$. In particular, this shows that there are infinitely many embeddings $p:[n]\to \Rea^2$ attaining the supremum $a_2(K_n)=n/2$. For completeness, we include a proof, following the proof of the special case sketched by Zhu in \cite{zhu2013quantitative}.
\begin{proof}
Let $x,y\in \Rea^{2n}$ be defined by 
\[
    x= (1,0,1,0,\ldots,1,0)^t,\ \ 
    y= (0,1,0,1,\ldots,0,1)^t.
\]
For $i\in [n]$, let $p_x(i),p_y(i)$ be the two coordinates of $p(i)$, and let
\[
    p^{\perp}(i)= (-p_y(i),p_x(i))^t.
\]
Define $r\in \Rea^{2n}$ by
\begin{align*}
    r&=
    (p_x^{\perp}(1),p_y^{\perp}(1),p_x^{\perp}(2),p_y^{\perp}(2),\ldots,p_x^{\perp}(n),p_y^{\perp}(n))^t
    \\&=
    (-p_y(1),p_x(1),-p_y(2),p_x(2),\ldots,-p_y(n),p_x(n))^t.
\end{align*}
It is easy to check (using the fact that $\sum_{i=1}^n p(i)=0$) that $x,y,r$ belong to the kernel of $L(K_n,p)$, and moreover, form an orthogonal set.

We identify the mapping $p$ with the vector
\[
    p= (p_x(1),p_y(1),\ldots,p_x(n),p_y(n))^t\in \Rea^{2n}.
\]
By Lemma \ref{lemma:n_eigenvalue}, $p$ is an eigenvector of $L(K_n,p)$ with eigenvalue $n$.

We will show that
\begin{equation}\label{eq:zhu}
    L(K_n,p)= \frac{n}{2}I +\frac{1}{2} (p p^t -x x^t -y y^t -r r^t).
\end{equation}
Then, it immediately follows that every vector in $\Rea^{2d}$ orthogonal to $p,x,y$ and $r$ is an eigenvector of $L(K_n,p)$ with eigenvalue $n/2$, as wanted.

We can write $L(K_n,p)$ as a $n\times n$ block matrix (see \cite[Section 4.4]{jordan2020rigidity}), formed by $2\times 2$ blocks
\[
[L(K_n,p)]_{i,j}= -d_{i j} d_{i j}^t,
\]
for $i\neq j\in [n]$, and
\[
[L(K_n,p)]_{i,i}=\sum_{j\in[n]\setminus\{i\}} d_{i j} d_{i j}^t
\]
for $i\in[n]$.
It is then easy to check that proving \eqref{eq:zhu} is equivalent to showing that, for all $i\in[n]$, 
\begin{equation}\label{eq:block_zhu_diag}
 \sum_{j\in[n]\setminus\{i\}} d_{i j} d_{i j}^t=\frac{1}{2} p(i)p(i)^t- \frac{1}{2}p^{\perp}(i) (p^{\perp}(i))^t  + \frac{n-1}{2} I, 
\end{equation}
and for all $i\neq j\in[n]$
\begin{equation}\label{eq:block_zhu}
-d_{i j} d_{i j}^t= \frac{1}{2} p(i)p(j)^t- \frac{1}{2}p^{\perp}(i) (p^{\perp}(j))^t  - \frac{1}{2} I.
\end{equation}
First, note that \eqref{eq:block_zhu_diag} follows from \eqref{eq:block_zhu}. Indeed, let $i\in[n]$. By \eqref{eq:block_zhu}, and using the fact that $\sum_{j\in[n]\setminus\{i\}} p(j)=-p(i)$ (and similarly, $\sum_{j\in[n]\setminus\{i\}} p^{\perp}(j)=-p^{\perp}(i)$), we obtain
\begin{multline*}
\sum_{j\in[n]\setminus\{i\}} d_{i j} d_{i j}^t= \sum_{j\in[n]\setminus\{i\}}\left( \frac{1}{2} I -\frac{1}{2}p(i)p(j)^t+\frac{1}{2}p^{\perp}(i)(p^{\perp}(j))^t\right)
\\
=\frac{n-1}{2}I -\frac{1}{2}p(i)\left(\sum_{j\in[n]\setminus\{i\}}p(j)^t\right)
+\frac{1}{2}p^{\perp}(i)\left(\sum_{j\in[n]\setminus\{i\}}p^{\perp}(j)^t\right)
\\
=\frac{n-1}{2}I +\frac{1}{2}p(i)p(i)^t
-\frac{1}{2}p^{\perp}(i)p^{\perp}(i)^t.
\end{multline*}
So, we are left to show that \eqref{eq:block_zhu} holds for all $i\neq j\in[n]$.

Let $i\neq j\in[n]$, and let
\[
    M= d_{i j} d_{i j}^t+ \frac{1}{2} p(i)p(j)^t- \frac{1}{2}p^{\perp}(i) (p^{\perp}(j))^t  - \frac{1}{2} I.
\]
We will show that $M=0$. We denote the four coordinates of $M$ by $M_{x x}$, $M_{x y}$, $M_{y x}$ and $M_{y y}$. Since $\|p(i)\|=\|p(j)\|=1$, we have
\[
    \|p(i)-p(j)\|^2 = 2(1-p(i)\cdot p(j)),
\]
and therefore
\[
d_{i j} d_{i j}^t= \frac{(p(i)-p(j))(p(i)-p(j))^t}{2(1-p(i)\cdot p(j))}.
\]
Using this and the fact that $p_x(i)^2+p_y(i)^2=p_x(j)^2+p_y(j)^2=1$, we obtain
\begin{align*}
&M_{x x}= \frac{(p_x(i)-p_x(j))(p_x(i)-p_x(j))}{2(1-p_x(i)p_x(j)-p_y(i)p_y(j))} +\frac{1}{2}p_x(i)p_x(j)-\frac{1}{2}p_y(i)p_y(j)-\frac{1}{2}
\\&= \frac{1}{2(1-p_x(i)p_x(j)-p_y(i)p_y(j))}\big( p_x(i)^2-2p_x(i)p_x(j)+p_x(j)^2 +p_x(i)p_x(j)\\&-p_y(i)p_y(j)
+p_y(i)^2p_y(j)^2-p_x(i)^2 p_x(j)^2 -1 +p_x(i)p_x(j)+p_y(i)p_y(j)
\big)
\\
&=\frac{p_x(i)^2 + p_x(j)^2 -p_x(i)^2p_x(j)^2-1 +(1-p_x(i)^2)(1-p_x(j)^2)}{2(1-p_x(i)p_x(j)-p_y(i)p_y(j))}=0.
\end{align*}
Similarly, $M_{y y}=0$. Finally, we have
\begin{align*}
M_{x y}&= \frac{(p_x(i)-p_x(j))(p_y(i)-p_y(j))}{2(1-p_x(i)p_x(j)-p_y(i)p_y(j))} +\frac{1}{2}p_x(i)p_y(j)+\frac{1}{2}p_y(i)p_x(j)
\\&= 
\frac{1}
{2(1-p_x(i)p_x(j)-p_y(i)p_y(j))}
\big(
p_x(i)p_y(i)-p_y(i)p_x(j)-p_x(i)p_y(j)\\&+p_x(j)p_y(j)
+p_x(i)p_y(j)+p_y(i)p_x(j)
-p_x(i)^2 p_x(j)p_y(j)\\
&-p_x(i)p_y(i)p_x(j)^2  -p_x(i)p_y(i)p_y(j)^2 -p_y(i)^2p_x(j)p_y(j)
\big)
\\
&=\frac{p_x(i)p_y(i)+p_x(j)p_y(j)
-p_x(j)p_y(j)
-p_x(i)p_y(i)}
{2(1-p_x(i)p_x(j)-p_y(i)p_y(j))}
=0,
\end{align*}
and similarly $M_{y x}=0$. So, $M=0$ as wanted.
\end{proof}

The following very simple Lemma will be useful for finding the additional eigenvectors of $L^{-}(T(n,d+1),q^{\triangle})$ and $L^{-}(T(n,2d),q^{\diamond})$.

\begin{lemma}\label{lemma:condition_for_eigenvector}
Let $G=(V,E)$ and $p:V\to \Rea^d$ such that $p(i)\neq p(j)$ for all $\{i,j\}\in E$. Let $L^{-}=L^{-}(G,p)$ and $\phi\in \Rea^{E}$. Let $\text{supp}(\phi)\subset E$ be the support of $\phi$. Assume that the following conditions hold:
\begin{enumerate}
    \item For all $e\in E\setminus \supp(\phi)$, $
        \cos(\theta(e,e'))=\cos(\theta(e,e''))
    $
    for every $e',e''\in \supp(\phi)$ such that $|e\cap e'|=|e\cap e''|=1$.
    \item For all $v\in V$,
   $
    \sum_{e\in E:\, v\in e} \phi(e)=0.
    $
    \item There is $\lambda\in \Rea$ such that for all $e\in\supp(\phi)$
    \[
        \sum_{e'\in E:\, |e\cap e'|=1} \cos(\theta(e,e'))\phi(e')=(\lambda-2)\phi(e).
    \]
\end{enumerate}
Then, $\phi$ is an eigenvector of $L^{-}$ with eigenvalue $\lambda$.
\end{lemma}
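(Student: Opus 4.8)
The plan is to verify directly that $(L^{-}\phi)(e)=\lambda\phi(e)$ for every edge $e\in E$, splitting into two cases according to whether $e$ lies in $\supp(\phi)$ or not. The starting point is the explicit formula for $L^{-}$ from Lemma \ref{lemma:down_laplacian}: since $p(i)\neq p(j)$ for every edge $\{i,j\}\in E$, all diagonal entries equal $2$, so for any $e\in E$ we have
\[
(L^{-}\phi)(e)=2\phi(e)+\sum_{e'\in E:\,|e\cap e'|=1}\cos(\theta(e,e'))\,\phi(e').
\]
For $e\in\supp(\phi)$ the claim is then immediate: substituting condition (3) into this formula gives $(L^{-}\phi)(e)=2\phi(e)+(\lambda-2)\phi(e)=\lambda\phi(e)$.

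The substance of the argument is the case $e\notin\supp(\phi)$, where $\phi(e)=0$ and I must show the off-diagonal sum vanishes. First I would observe that only neighbors $e'\in\supp(\phi)$ contribute, since $\phi(e')=0$ otherwise; and by condition (1) all the cosines $\cos(\theta(e,e'))$ for such neighbors share a common value $c$, so the sum factors as $c\sum_{e':\,|e\cap e'|=1}\phi(e')$. It then remains to show $\sum_{e':\,|e\cap e'|=1}\phi(e')=0$. Writing $e=\{u,v\}$, the edges sharing exactly one vertex with $e$ partition into those through $u$ other than $e$ and those through $v$ other than $e$, with no overlap since $e$ is the only edge incident to both $u$ and $v$. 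Applying condition (2) at $u$ and at $v$, and using $\phi(e)=0$, yields $\sum_{f\ni u,\,f\neq e}\phi(f)=0$ and $\sum_{f\ni v,\,f\neq e}\phi(f)=0$; adding these gives exactly $\sum_{e':\,|e\cap e'|=1}\phi(e')=0$. Hence the off-diagonal sum equals $c\cdot 0=0=\lambda\phi(e)$.

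Since the eigenvector equation holds coordinatewise in both cases, I conclude $L^{-}\phi=\lambda\phi$. There is no genuine obstacle here — the lemma is elementary, as the authors indicate. The only point requiring care is the bookkeeping in the second case: recognizing that condition (1) is precisely what lets one pull a single scalar $c$ out of the neighbor sum, and that condition (2) applied at \emph{both} endpoints of $e$ (together with $\phi(e)=0$) accounts for each neighboring edge exactly once.
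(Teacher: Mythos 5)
Your proof is correct and follows essentially the same route as the paper's: the paper's one-line proof invokes Lemma \ref{lemma:down_laplacian} together with conditions (1) and (2) to get $(L^{-}\phi)(e)=0$ off the support and condition (3) on the support, exactly as you do. Your write-up merely makes explicit the bookkeeping (factoring out the common cosine $c$ and applying condition (2) at both endpoints of $e$) that the paper leaves to the reader.
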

\begin{proof}
By Lemma \ref{lemma:down_laplacian}, conditions $(1)$ and $(2)$ imply that $L^{-}\phi(e)=0=\phi(e)$ for every $e\in E\setminus \supp(\phi)$, and condition $(3)$ says exactly that $L^{-}\phi(e)=\lambda\phi(e)$ for all $e\in\supp(\phi)$. Therefore, we obtain $L^{-}\phi=\lambda\phi$, as wanted.
\end{proof}

\subsection{The spectrum of $L(T(n,d+1),q^{\triangle})$}

\simplexrepeated*

Note that for $n=d+1$ the proof below gives a second proof of Theorem \ref{thm:simplex_spectrum}.
\begin{proof}

Denote $T(n,d+1)=(V,E)$. First note that, as for all frameworks, $0$ is an eigenvalue of $L(T(n,d+1),q^{\triangle})$ with multiplicity at least $\binom{d+1}{2}=d(d+1)/2$.

Moreover, note that $L(T(n,d+1),q^{\triangle})=L(K_n,q^{\triangle})$. Thus, by Lemma \ref{lemma:n_eigenvalue} , $n$ is an eigenvalue of $L(T(n,d+1),q^{\triangle})$.
Also, since $\|q^{\triangle}(v)\|=1$ for all $v\in V$ and $\sum_{v\in V} q^{\triangle}(v)=0$, then by Proposition \ref{prop:large_eigenvalues_balanced_p}, $n/2$ is an eigenvalue of $L(T(n,d+1),q^{\triangle})$ with multiplicity at least $n-1$.

Therefore, we are left to show that $n/(d+1)$ is an eigenvalue with multiplicity at least $(d-2)(d+1)/2$ and that $n/(2(d+1))$ is an eigenvalue with multiplicity at least $(n-d-1)(d-1)$.

Since the non-zero eigenvalues of $L(T(n,d+1),q^{\triangle})$ and $L^{-}=L^{-}(T(n,d+1),q^{\triangle})$ are the same, we will find the corresponding eigenvectors for $L^{-}$.

Denote by $V_1,\ldots,V_{d+1}$ the sides of $T(n,d+1)$. For every $i\neq j\in[d+1]$, let $E(V_i,V_j)$ be the set of edges with one endpoint in $V_i$ and the other in $V_j$.
Let $i,j,k\in [d+1]$ such that $i\neq j,k$. Let $v\in V_i$, $u\in V_j$ and $w\in V_k$. Denote by $\theta_{uvw}$ the angle between $q^{\triangle}(v)-q^{\triangle}(u)$ and $q^{\triangle}(v)-q^{\triangle}(w)$. Then, we have
\begin{equation}\label{eq:anglesimplex}
    \cos(\theta_{uvw})=\begin{cases}
      \frac{1}{2} & \text{ if } j\neq k,\\  
      1 & \text{ if } j=k.
    \end{cases}
\end{equation}

\textbf{Eigenvalue $\bm{n/(d+1)}$:}  Assume $d\geq 3$. Let $i_1,i_2,i_3,i_4\in[d+1]$ be four distinct integers. 
Define $ \Phi_{i_1,i_2,i_3,i_4}\in \Rea^{E}$ by
\[
 \Phi_{i_1,i_2,i_3,i_4}(e)=\begin{cases}
     1 & \text{ if } e\in E(V_{i_1},V_{i_2})\cup E(V_{i_3},V_{i_4}),\\
     -1 & \text{ if } e\in E(V_{i_2},V_{i_3})\cup E(V_{i_1},V_{i_4}),\\
     0 & \text{ otherwise.}
 \end{cases}
\]
Note that for every $e\notin \supp(\Phi_{i_1,i_2,i_3,i_4})$ and every $e'\in \supp(\Phi_{i_1,i_2,i_3,i_4})$ such that $|e\cap e'|=1$, $\cos(\theta(e,e'))=1/2$, and that for all $v\in V$, we have $\sum_{e\in E:\, v\in e} \Phi_{i_1,i_2,i_3,i_4}(e)=0$.

Moreover, let $e=\{u,v\}\in\supp(\Phi_{i_1,i_2,i_3,i_4})$. 
Assume $u\in V_{i_1}$ and $v\in V_{i_2}$ (the other cases are analyzed similarly). Then, using \eqref{eq:anglesimplex}, we obtain
\begin{multline*}
        \sum_{e'\in E:\, |e\cap e'|=1} \cos(\theta(e,e'))\Phi_{i_1,i_2,i_3,i_4}(e')=\sum_{w\in V_{i_1}\setminus\{u\}} 1 + \sum_{w\in V_{i_2}\setminus\{v\}} 1 \\-\sum_{w\in V_{i_3}}\frac{1}{2}-\sum_{w\in V_{i_4}}\frac{1}{2}= \frac{n}{d+1}-2= \left(\frac{n}{d+1}-2\right)\Phi_{i_1,i_2,i_3,i_4}(e).
\end{multline*}
 
Therefore, by Lemma \ref{lemma:condition_for_eigenvector}, $\Phi_{i_1,i_2,i_3,i_4}$ is an eigenvector of $L^{-}$ with eigenvalue $n/(d+1)$.

Let 
\begin{multline*}
    I=  \left\{ (1,2,3,k),(1,3,2,k) :\, k\in [d+1]\setminus\{1,2,3\}\right\}
    \\
    \cup \left\{ (2,3,j,k)  :\, j,k\in[d+1]\setminus\{1,2,3\},j<k \right\},
\end{multline*}
and let
\[
    B= \left\{ \Phi_{i_1,i_2,i_3,i_4}\right\}_{(i_1,i_2,i_3,i_4)\in I}.
\]
Note that
\[
    |B|= |I|= 2(d-2)+ \binom{d-2}{2} = \frac{(d-2)(d+1)}{2}.
\]
We will show that $B$ is a linearly independent set:

For each $(i_1,i_2,i_3,i_4)\in I$, let $\alpha_{i_1,i_2,i_3,i_4}\in \Rea$. Assume that
\[
    \sum_{(i_1,i_2,i_3,i_4)\in I} \alpha_{i_1,i_2,i_3,i_4} \Phi_{i_1,i_2,i_3,i_4}=0.
\]
Let $j,k\in[d+1]\setminus\{1,2,3\}$ such that $j<k$. Note that for every $u\in V_j$ and $w\in V_k$, $\Phi_{2,3,j,k}$ is the only vector in $B$ containing $\{u,w\}$ in its support. Therefore, we must have $\alpha_{2,3,j,k}=0$.

Now, let $k\in [d+1]\setminus\{1,2,3\}$. For every $u\in V_1$, $w\in V_k$, the only vectors in $B$ containing $\{u,w\}$ in their support are $\Phi_{1,2,3,k}$ and $\Phi_{1,3,2,k}$. Therefore, we must have 
$\alpha_{1,2,3,k}=-\alpha_{1,3,2,k}$.
Hence, we obtain a linear relation
\[
    \sum_{k\in[d+1]\setminus\{1,2,3\}} \alpha_{1,2,3,k} (\Phi_{1,2,3,k}-\Phi_{1,3,2,k})=0.
\]
Note that $\Phi_{1,2,3,k}-\Phi_{1,3,2,k}=\Phi_{1,2,k,3}$. So, we obtain
\[
    \sum_{k\in[d+1]\setminus\{1,2,3\}} \alpha_{1,2,3,k} \Phi_{1,2,k,3}=0.
\]
But note that each vector $\Phi_{1,2,k,3}$ contains a unique edge in its support (for example, any edge $\{u,w\}$ where $u\in V_2$ and $w\in V_k$). Therefore, $\{\Phi_{1,2,k,3}\}_{k\in[d+1]\setminus\{1,2,3\}}$ are independent, and hence $\alpha_{1,2,3,k}=\alpha_{1,3,2,k}=0$ for all $k\in[d+1]\setminus\{1,2,3\}$. Thus, $B$ is linearly independent.
So, $n/(d+1)$ is an eigenvalue of $L^{-}$ with multiplicity at least $(d-2)(d+1)/2$.

\textbf{Eigenvalue $\bm{n/(2(d+1))}$:} 

Let $i,j,k\in[d+1]$ be three distinct 
indices, and let $u,v\in V_i$. Define
\[
    \Psi_{u,v,j,k}= \sum_{w\in V_j} (1_{\{u,w\}}-1_{\{v,w\}})+\sum_{w\in V_k} (1_{\{v,w\}}-1_{\{u,w\}}).
\]
Let $e=\{x,y\}\notin \supp(\Psi_{u,v,j,k})$. Assume $x\in V_r$ and $y\in V_t$. If $\{r,t\}=\{i,j\}$ or $\{r,t\}=\{i,k\}$, then $\cos(\theta(e,e'))=1$ for all $e'\in \supp(\Psi_{u,v,j,k})$ such that $|e\cap e'|=1$. Otherwise, $\cos(\theta(e,e'))=1/2$ for all $e'\in \supp(\Psi_{u,v,j,k})$  such that $|e\cap e'|=1$.

Also, it is easy to check that for every $w\in V$, $\sum_{e\in E,\\ w\in e} \Psi_{u,v,j,k}(e)=0$.

Finally, let $e=\{x,y\}\in\supp(\Psi_{u,v,j,k})$. Assume $x=u$ and $y\in V_j$ (the other cases are similar). Then, by \eqref{eq:anglesimplex}, we have
\begin{align*}
        &\sum_{e'\in E:\, |e\cap e'|=1} \cos(\theta(e,e'))\Psi_{u,v,j,k}(e')
        \\&=\sum_{w\in V_j\setminus\{y\}} \cos(\theta_{y u w}) - \sum_{w\in V_k}\cos(\theta_{y u w}) - \cos(\theta_{u y v})
        \\
        &=\sum_{w\in V_j\setminus\{y\}} 1  -\sum_{w\in V_k} \frac{1}{2} -1 = \frac{n}{2(d+1)}-2=\left(\frac{n}{2(d+1)}-2\right)\Psi_{u,v,j,k}(e).
        \end{align*}
Therefore, by Lemma \ref{lemma:condition_for_eigenvector}, $\Psi_{u,v,j,k}$ is an eigenvector of $L^{-}$ with eigenvalue $n/(2(d+1))$.

For every $i\in[d+1]$, fix some $j(i)\in [d+1]\setminus\{i\}$ and $u_i\in V_i$, and let
\[
    J_i=\left\{(u_i,v,j(i),k):\, v\in V_i\setminus\{u_i\}, \, k\in[d+1]\setminus\{i,j(i)\}\right\}
\]
and
\[
    B_i= \left\{ \Psi_{u,v,j,k}\right\}_{(u,v,j,k)\in J_i}.
\]
Let $B=\cup_{i=1}^{d+1} B_i$. Note that
\[
    |B|=\sum_{i=1}^{d+1}|J_i|= (d+1)\left(\frac{n}{d+1}-1\right)(d-1)  =    (n-d-1)(d-1).
\]
We will show that $B$ is a linearly independent set. Let $i\neq i'\in[d+1]$ and let $(u_i,v,j,k)\in J_i$ and $(u_{i'},v',j',k')\in J_{i'}$. We will show that $\Psi_{u_i,v,j,k}$ and $\Psi_{u_{i'},v',j',k'}$ are orthogonal. Indeed, the supports of the two vectors are disjoint unless $i'\in \{j,k\}$ and $i\in \{j',k'\}$. In this case, the intersection of the two supports consists of the four edges $\{u_i,u_{i'}\},\{u_i,v'\},\{v,u_{i'}\},\{v,v'\}$, and it is easy to check that 
we have $\Psi_{u_i,v,j,k}\cdot\Psi_{u_{i'},v',j',k'}=0$.

Therefore, it is enough to check that for each $i\in[d+1]$, $B_i$ is linearly independent. But this follows from the fact that for every $(u_i,v,j(i),k)\in J_i$, there is a unique edge in the support of $\Psi_{u_i,v,j(i),k}$ (any edge of the form $\{v,w\}$ where $w\in V_k$). So $B$ is linearly independent, and therefore $n/(2(d+1))$ is an eigenvector of $L^{-}$ with multiplicity at least $(n-d-1)(d-1)$. 
\end{proof}

\begin{remark}
In \cite{jordan2020rigidity}, a lower bound $\lambda_{\binom{d+1}{2}+1}(L(T(n,d+1),q^{\triangle}))\geq \frac{2d^2+d}{2(d+1)^3}n$ (for $n$ divisible by $d+1$) was stated, in contradiction to Theorem \ref{thm:simplex_repeated}. After fixing a typo in \cite[Lemma 5.4]{jordan2020rigidity}, the corrected lower bound obtained in \cite{jordan2020rigidity} is
\[
\lambda_{\binom{d+1}{2}+1}(L(T(n,d+1),q^{\triangle}))\geq \frac{d n}{2(d+1)^2}.
\]
From Theorem \ref{thm:simplex_repeated} it follows that in fact, for $n$ divisible by $d+1$, 
\[
\lambda_{\binom{d+1}{2}+1}(L(T(n,d+1),q^{\triangle}))= \frac{n}{2(d+1)}.
\]
\end{remark}

\subsection{The spectrum of $L(T(n,2d),q^{\diamond})$}

\crosspolytopespectrum*
\begin{proof}

Denote $T(n,2d)=(V,E)$.
First, note that, as for all frameworks, $0$ is an eigenvalue of $L(T(n,2d),q^{\diamond})$ with multiplicity at least $d(d+1)/2$.

Note that $L(T(n,2d),q^{\diamond})=L(K_n,q^{\diamond})$. Thus, by Lemma \ref{lemma:n_eigenvalue} , $n$ is an eigenvalue of $L(T(n,2d),q^{\diamond})$.
Also, since $\|q^{\diamond}(v)\|=1$ for all $v\in V$ and $\sum_{v\in V} q^{\diamond}(v)=0$, then by Proposition \ref{prop:large_eigenvalues_balanced_p}, $n/2$ is an eigenvalue of $L(T(n,2d),q^{\diamond})$ with multiplicity at least $n-1$.

Therefore, we are left to show that $n/d$ is an eigenvalue with multiplicity at least $d(d-1)/2$ and that $n/(2d)$ is an eigenvalue with multiplicity at least $n(d-1)-d^2$.

Since the non-zero eigenvalues of $L(T(n,2d),q^{\diamond})$ and $L^{-}=L^{-}(T(n,2d),q^{\diamond})$ are the same, we will find the corresponding eigenvectors for $L^{-}$.

Denote the parts of $T(n,2d)$ by $V_{1,1},V_{1,-1},V_{2,1},V_{2,-1}\ldots,V_{d,1},V_{d,-1}$ where, for $i\in[d]$ and $s\in\{1,-1\}$, $q^{\diamond}(V_{i,s})$ is the singleton set consisting of the vector $s e_i$, where $e_i$ is the $i$-th standard vector in $\Rea^d$. 
For every $i,j\in [d]$ and $x,y\in\{1,-1\}$, let $E(V_{i,x},V_{j,y})$ be the set of edges with one endpoint in $V_{i,x}$ and the other in $V_{j,y}$.

Let $u\in V_{i,x}$ and $v\in V_{j,y}$, where $i\neq j\in[d]$ and $x,y\in\{1,-1\}$. Let $w\neq u,v$, and assume $w\in V_{k,z}$ for $k\in [d]$ and $z\in\{1,-1\}$.

Let $\theta_{v u w}$ be the angle between $q^{\diamond}(u)-q^{\diamond}(v)$ and $q^{\diamond}(u)-q^{\diamond}(w)$. Then,
\begin{equation}\label{eq:angles2d}
\cos(\theta_{v u w})=\begin{cases}
    \frac{1}{2} & \text{ if } k\neq i,j,\\
    \frac{1}{\sqrt{2}} & \text{ if } k=i, z=-x,\\
    1 & \text{ if } k=j, z=y,\\
    0 & \text{ otherwise.}
    \end{cases}
\end{equation}

\textbf{Eigenvalue $\bm{n/d}$:}  Let $i< j \in [d]$.
Let $\phi_{ij}\in \Rea^E$ be defined by
\[
\phi_{ij}(e)= \begin{cases}
            1 & \text{ if } e\in E(V_{i,1},V_{j,1})\cup E(V_{i,-1},V_{j,-1}),\\
            -1 & \text{ if } e\in E(V_{i,-1},V_{j,1})\cup E(V_{i,1},V_{j,-1}),\\
            0 & \text{ otherwise}.
\end{cases}
\]
Let $e\notin \supp(\phi_{i j})$. 
If $e\in E(V_{i,1},V_{i,-1})\cup E(V_{j,1},V_{j,-1})$, then $\cos(\theta(e,e'))=1/\sqrt{2}$
for every $e'\in \supp(\phi_{i j})$ such that $|e\cap e'|=1$.
If $e\in E(V_{k,1},V_{k,-1})$ for some $k\in[d]\setminus\{i,j\}$, then there are no edges $e'\in \supp(\phi_{i j})$ such that $|e\cap e'|=1$.
If $e\notin \cup_{k=1}^d E(V_{k,1},V_{k,-1})$, then $\cos(\theta(e,e'))=1/2$
for every $e'\in \supp(\phi_{i j})$ such that $|e\cap e'|=1$.

Also, note that for all $v\in V$, we have $\sum_{e\in E:\, v\in e} \phi_{i j}(e)=0$.

Moreover, let $e=\{u,v\}\in\supp(\phi_{i j})$. Assume $u\in V_{i, 1}$ and $v\in V_{j, 1}$ (the other cases are analyzed similarly). Then, using \eqref{eq:angles2d}, we obtain
\begin{align*}
        \sum_{e'\in E:\, |e\cap e'|=1} \cos(\theta(e,e'))\phi_{ i j}(e') &=\sum_{w\in V_{i,1}\setminus\{u\}} 1 + \sum_{w\in V_{j,1}\setminus\{v\}} 1 
      \\
      &= \frac{n}{d}-2 
      = \left(\frac{n}{d}-2\right)\phi_{i j}(e).
\end{align*}
So, by Lemma \ref{lemma:condition_for_eigenvector}, $\phi_{ij}$ is an eigenvector of $L^{-}$ with eigenvalue $n/d$.
Since the supports of the vectors $\{\phi_{ij}\}_{i<j\in[d]}$ are pairwise disjoint, they form a linearly independent set. Thus, $n/d$ is an eigenvalue of $L^{-}$ with multiplicity at least $d(d-1)/2$.

\textbf{Eigenvalue $\bm{n/(2d)}$:} 
Let $i\neq j\in[d]$. Define $\psi_{i j}\in \Rea^E$ by
\[
    \psi_{ij}(e)= \begin{cases}
        1 & \text{ if } e\in E(V_{i,1},V_{j,1})\cup E(V_{i,-1},V_{j,1}),\\
        -1 & \text{ if } e\in E(V_{i,1},V_{j,-1})\cup E(V_{i,-1},V_{j,-1}),\\
        0 & \text{ otherwise.}
    \end{cases}
\]
For $k\neq i,j$, let $   \Psi_{i j k}= \psi_{i j}-\psi_{k j}$.

Let $e\notin \supp(\Psi_{i j k})$. 
If $e\in \cup_{t=1}^d E(V_{t,1},V_{t,-1})$, then $\cos(\theta(e,e'))=1/\sqrt{2}$
for every $e'\in \supp(\Psi_{i j k})$ such that $|e\cap e'|=1$ (note that if $t\neq i,j,k$ then there are no such edges $e'$).
If $e\notin \cup_{t=1}^d E(V_{t,1},V_{t,-1})$, then $\cos(\theta(e,e'))=1/2$
for every $e'\in \supp(\Psi_{i j k})$ such that $|e\cap e'|=1$.

Also, note that for all $v\in V$, we have $\sum_{e\in E:\, v\in e} \Psi_{i j k}(e)=0$.

Moreover, let $e=\{u,v\}\in\supp(\Psi_{i j k})$. Assume $u\in V_{i, 1}$ and $v\in V_{j, 1}$ (the other cases are analyzed similarly). Then, using \eqref{eq:angles2d}, we obtain
\begin{align*}
        \sum_{e'\in E:\, |e\cap e'|=1} \cos(\theta(e,e'))\Psi_{ i j k}(e') &=\sum_{w\in V_{i,1}\setminus\{u\}} 1 + \sum_{w\in V_{j,1}\setminus\{v\}} 1
        - \sum_{w\in V_{k,1}\cup V_{k,-1}} \frac{1}{2}
      \\
      &= \frac{n}{2d}-2
      = \left(\frac{n}{2d}-2\right)\Psi_{i j k}(e).
\end{align*}

Therefore, by Lemma \ref{lemma:condition_for_eigenvector}, $\Psi_{i j k}$ is an eigenvector of $L^{-}$ with eigenvalue $n/(2d)$. 

For each $j\in [d]$, fix $i(j)\in[d]\setminus\{j\}$. Let
\[
    I_j=\{(i(j),j,k):\, k\in [d]\setminus\{j,i(j)\}\}
\]
and $I=\cup_{j\in[d]} I_j$. Note that $|I|=d(d-2)$. We will show that $\{\Psi_{ijk}\}_{(i,j,k)\in I}$ is a linearly independent set.

First, note that the vectors $\{\psi_{ij}\}_{i\neq j\in[d]}$ form an orthogonal set. Indeed, let $i,j,k,s\in[d]$ such that $i\neq j$ and $k\neq s$. If $\{i,j\}\neq \{k,s\}$ then $\psi_{i j}$ and $\psi_{k s}$ have disjoint supports, and therefore $\psi_{ij}\cdot \psi_{k s}=0$. Otherwise, if $i=s$ and $j=k$
we obtain
\begin{multline*}
\psi_{ij}\cdot \psi_{ji}
= \sum_{e\in E(V_{i,1},V_{j,1})} 1\cdot 1 + \sum_{e\in E(V_{i,1},V_{j,-1})} (-1)\cdot 1 \\ + \sum_{e\in E(V_{i,-1},V_{j,1})} 1\cdot (-1) + \sum_{e\in E(V_{i,-1},V_{j,-1})} (-1)\cdot(-1)=0.
\end{multline*}
Hence, for $\{i,j,k\}\in\binom{[d]}{3}$ and $\{i',j',k'\}\in\binom{[d]}{3}$ such that $j\neq j'$, we have $\Psi_{i j k}\cdot \Psi_{i' j' k'}=0$.
Therefore, we are left to show that for all $j\in[d]$, $\{\Psi_{i j k}\}_{(i,j,k)\in I_j}$ is linearly independent. But this follows from the fact that for every $(i,j,k)\in I_j$ there is an edge in the support of $\Psi_{i j k}$ that is unique to $\Psi_{i j k}$ (for example, any edge $\{u,v\}$ where $u\in V_{j,1}$ and $v\in V_{k,1}$).

We now complete the set $\{\Psi_{i j k}\}_{(i,j,k)\in I}$ to an eigenbasis of $n/2d$.
Let $i\neq j \in[d]$ and $x\in\{1,-1\}$. Let $u\neq v\in V_{i,x}$.

Define $f_{j}^{u,v}\in \Rea^E$ by
\[
    f_{j}^{u,v}= \sum_{w\in V_{j,1}} (1_{\{u,w\}}-1_{\{v,w\}})+\sum_{w\in V_{j,-1}} (-1_{\{u,w\}}+1_{\{v,w\}}).
\]
Let $e\notin \supp(f_j^{u,v})$.
If $e\in E(V_{i,x},V_{j,1})\cup E(V_{i,x},V_{j,-1})$, then $\cos(\theta(e,e'))=1$ for all $e'\in \supp(f_j^{u,v})$ such that $|e\cap e'|=1$. If $e\in E(V_{i,1},V_{i,-1})\cup E(V_{j,1},V_{j,-1})$, then $\cos(\theta(e,e'))=1/\sqrt{2}$ for all $e'\in \supp(f_j^{u,v})$ such that $|e\cap e'|=1$. If $e\in E(V_{i,-x},V_{j,1})\cup E(V_{i,-x},V_{j,-1})$, then $\cos(\theta(e,e'))=0$ for all $e'\in \supp(f_j^{u,v})$ such that $|e\cap e'|=1$. Otherwise, $\cos(\theta(e,e'))=1/2$ for all $e'\in \supp(f_j^{u,v})$ such that $|e\cap e'|=1$.

In addition, it is easy to check that for every $w\in V$, $\sum_{e\in E,\\ w\in e} f^{u,v}_j(e)=0$.

Finally, let $e=\{a,b\}\in\supp(f_j^{u,v})$. Assume $a=u$ and $b\in V_{j,1}$ (the other cases are similar). Then, by \eqref{eq:angles2d}, we have
\begin{align*}
        &\sum_{e'\in E:\, |e\cap e'|=1} \cos(\theta(e,e')) f_j^{u,v}(e')
        \\&=\sum_{w\in V_{j,1}\setminus\{b\}} \cos(\theta_{b u w}) - \sum_{w\in V_{j,-1}}\cos(\theta_{b u w}) - \cos(\theta_{u b v})
        \\
        &=\sum_{w\in V_j\setminus\{b\}} 1  -\sum_{w\in V_k} 0 -1 = \frac{n}{2d}-2=\left(\frac{n}{2d}-2\right)f_j^{u,v}(e).
        \end{align*}
Therefore, by Lemma \ref{lemma:condition_for_eigenvector}, $f_j^{u,v}$ is an eigenvector of $L^{-}$ with eigenvalue $n/(2d)$.

For each $i\in [d]$ and $x\in\{1,-1\}$, fix some $u(i,x)\in V_{i,x}$. 
For $j\in[d]\setminus\{i\}$, let
\[
J_{i,x,j}=\{ (u(i,x),v,j):\, u(i,x)\neq v\in V_{i,x}\}.
\]
and let $J=\cup_{i\in[d]}\cup_{x\in\{1,-1\}}\cup_{j\in [d]\setminus \{i\}} J_{i,x,j}$. Note that $|J|=2d(d-1)(n/(2d)-1)$.

We will show that $\{f_j^{u,v}\}_{(u,v,j)\in J}$ are linearly independent.

Let $u\neq v\in V_{i,x}$ and $u'\neq v'\in V_{i',x'}$. Let $j\neq i$ and $j'\neq i'$.
Assume $(i,x,j)\neq (i',x',j')$.
We will show that $f_{j}^{u,v}$ and $f_{j'}^{u',v'}$ are orthogonal. Indeed, the supports of the two vectors are disjoint unless $i'=j$ and $j'=i$. But it is easy to check that also in this case we have $f_{j}^{u,v}\cdot f_{j'}^{u',v'}=0$.

Therefore, it is enough to show that for every $i\neq j\in[d]$ and $x\in\{1,-1\}$, $\{f_j^{u,v}\}_{(u,v,j)\in J_{i,x,j}}$ is linearly independent. But again, this follows from the fact that for every $(u,v,j)\in J_{i,x,j}$ there is an edge in the support of $f_{j}^{u,v}$ that is unique to it (for example, the edge $\{v,w\}$ for any $w\in V_{j,1}$).

Finally, note that $\psi_{ij}\cdot f_{k}^{u,v}=0$ for every $i\neq j\in[d]$ and $(u,v,k)\in J$. Therefore, $\Psi_{i j k}\cdot f_{m}^{u,v}=0$ for all $(i,j,k)\in I$ and $(u,v,m)\in J$. Thus, the eigenvectors $\{\Psi_{i j k}\}_{(i,j,k)\in I}\cup \{f_j^{u,v}\}_{(u,v,j)\in J}$ form a linearly independent set.

So, $n/(2d)$ is an eigenvalue of $L^{-}$ with multiplicity at least
\[
    d(d-2)+2d(d-1)(n/(2d)-1)= n(d-1)-d^2,
\]
as wanted.
\end{proof}

It was shown in \cite[Lemma 4.5]{jordan2020rigidity} that the removal of a vertex reduces the $d$-dimensional algebraic connectivity of a graph by at most $1$. Hence, as an immediate consequence of Theorem \ref{thm:crosspolytope_spectrum}, we obtain:

\lowerbound*

\section{The $n$ largest eigenvalues of the complete graph $K_n$}\label{sec:top-n-ev}

We proceed to establish a lower bound for the sum of the largest $n$ eigenvalues of $L(K_n,p)$ for all embeddings $p:[n]\to \Rea^d$. As a corollary, we  derive an upper bound for $a_d(K_n)$.

\begin{lemma}\label{lem:one_third}
Let $p:[n]\to\Rea^d$ be injective. Then,
\[
\sum_{j=(d-1)n+1}^{d n}\lambda_j(L(K_n,p)) \ge 
\frac {n^2}3+n\,.
\]
\end{lemma}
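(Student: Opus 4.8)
The plan is to pass to the lower stiffness matrix $L^{-}=L^{-}(K_n,p)$, which shares its nonzero spectrum with $L(K_n,p)$, so that the $n$ largest eigenvalues of $L(K_n,p)$ coincide with those of $L^{-}$ (here $\operatorname{rank} L\ge n$ since $dn-\binom{d+1}{2}\ge n$ for $n\ge d+1$). Since translating $p$ does not change any $d_{ij}$, I may assume $\sum_i p(i)=0$. By the Ky Fan maximum principle, the sum of the top $n$ eigenvalues of $L^{-}$ is at least $\operatorname{tr}\big(L^{-}|_{\Phi}\big)$ for \emph{any} $n$-dimensional subspace $\Phi\subset\Rea^{E}$. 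So it suffices to produce one good subspace and estimate this trace.

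The natural choice is to take $\Phi$ to be spanned by the candidate eigenvectors from Proposition~\ref{prop:large_eigenvalues_balanced_p}, but now for an arbitrary (not spherical) $p$: for $f\in\Rea^n$ set $\phi_f(\{i,j\})=(f_i+f_j)\ell_{ij}$, where $\ell_{ij}=\|p(i)-p(j)\|$, and let $\Phi=\{\phi_f:f\in\Rea^n\}$. Injectivity of $p$ gives $\ell_{ij}>0$, whence $f\mapsto\phi_f$ is injective and $\dim\Phi=n$. The key computation is that, writing $F=\sum_m f_m$ and $P=\sum_m f_m p(m)$ and using $\sum_m p(m)=0$, the block of $R(K_n,p)\phi_f$ at vertex $m$ is $(R\phi_f)_m=(nf_m+F)p(m)-P$. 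Hence
\[
\phi_f^{t}L^{-}\phi_f=\|R\phi_f\|^2=\sum_m (nf_m+F)^2\|p(m)\|^2-n\Big\|\sum_m f_m p(m)\Big\|^2=:f^{t}Hf,
\]
while $\|\phi_f\|^2=\sum_{\{i,j\}}(f_i+f_j)^2\ell_{ij}^2=:f^{t}Gf$, where $G$ is the signless Laplacian of $K_n$ with edge weights $\ell_{ij}^2$.

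Choosing a $G$-orthonormal basis of $\Rea^n$ turns $\{\phi_f\}$ into an orthonormal basis of $\Phi$, so $\operatorname{tr}(L^{-}|_{\Phi})=\operatorname{tr}(G^{-1}H)$, and the whole problem becomes the scalar inequality $\operatorname{tr}(G^{-1}H)\ge \tfrac{n^2}{3}+n$. Here I would expand both matrices in terms of $\rho=\sum_m\|p(m)\|^2$, the vector $b=(\|p(m)\|^2)_m$, and the (centered) Gram matrix $K=(p(i)\cdot p(j))_{ij}$ with $K\mathbf{1}=0$: one finds $G=nD_\rho+\rho I+\mathbf{1}b^{t}+b\mathbf{1}^{t}-2K$ and $H=n^2D_\rho+n(\mathbf{1}b^{t}+b\mathbf{1}^{t})+\rho\,\mathbf{1}\mathbf{1}^{t}-nK$, where $D_\rho=\operatorname{diag}(b)$. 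The miracle is that these combine as $H=nG+nK+\rho\,\mathbf{1}\mathbf{1}^{t}-n\rho I$, giving the clean identity
\[
\operatorname{tr}(G^{-1}H)=n^2+n\operatorname{tr}(G^{-1}K)+\rho\,\mathbf{1}^{t}G^{-1}\mathbf{1}-n\rho\operatorname{tr}(G^{-1}).
\]
As a consistency check, for $p$ on the unit sphere this reduces to $\tfrac{n^2+n}{2}$, matching Proposition~\ref{prop:2d}/Proposition~\ref{prop:large_eigenvalues_balanced_p}, with plenty of room above $\tfrac{n^2}{3}+n$.

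The main obstacle is the final inequality $\operatorname{tr}(G^{-1}H)\ge \tfrac{n^2}{3}+n$. Since $G^{-1}\succ0$ and $K\succeq0$, the terms $n\operatorname{tr}(G^{-1}K)$ and $\rho\,\mathbf{1}^{t}G^{-1}\mathbf{1}$ are nonnegative, so in the ``balanced'' regime it suffices to bound $n\rho\operatorname{tr}(G^{-1})\le\tfrac{2}{3}n^2-n$, i.e.\ to lower bound the spectrum of the $\ell^2$-weighted signless Laplacian $G$; when all $\|p(m)\|$ are comparable this follows from the fact that $G$ is, up to bounded factors, $\rho$ times the signless Laplacian of $K_n$, whose smallest eigenvalue is $\Theta(n)$. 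The delicate case is when the norms $\|p(m)\|$ are wildly non-uniform (say one far point forced to be balanced by the centering): there $\operatorname{tr}(G^{-1})$ blows up through the weakly-weighted subgraph on the small points, and one cannot discard the positive cross-terms $n\operatorname{tr}(G^{-1}K)+\rho\,\mathbf{1}^{t}G^{-1}\mathbf{1}$, which must be retained to compensate. I expect the heart of the argument to be controlling this interplay between $G^{-1}$, $K$ and $\mathbf{1}$ uniformly over all centered injective $p$, with the extremal configurations pinning down the constant $\tfrac13$.
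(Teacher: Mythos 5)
Your reduction is correct as far as it goes, but it is not a proof: everything hinges on the final inequality $\mathrm{Tr}(G^{-1}H)\ge \frac{n^2}{3}+n$, which you assert as the ``main obstacle'' and leave open. I checked your intermediate steps and they do hold --- $(R\phi_f)_m=(nf_m+F)p(m)-P$ is right, and so are the expansions of $G$ and $H$ and the identity $H=nG+nK+\rho\,\mathbf{1}\mathbf{1}^{t}-n\rho I$ (the last two using $K\mathbf{1}=0$) --- but the remaining inequality is genuinely delicate, and your suggested strategy of discarding the positive cross-terms in a ``balanced regime'' cannot be completed into a uniform argument. The reason is that the bound is \emph{tight} in degenerate limits: for $n=3$ collinear points $-1,0,1$ the matrix $L^{-}$ has spectrum $\{0,3,3\}$, so the top-$n$ sum equals $6=\frac{n^2}{3}+n$ exactly; near such flat configurations the compensation between $n\,\mathrm{Tr}(G^{-1}K)+\rho\,\mathbf{1}^{t}G^{-1}\mathbf{1}$ and $n\rho\,\mathrm{Tr}(G^{-1})$ must be exact, so no crude spectral lower bound on the weighted signless Laplacian $G$ can suffice. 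In short: you have transformed the problem into an equivalent optimization over centered injective configurations, not solved it.

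The paper's proof follows the same Ky Fan skeleton but chooses the subspace so that no configuration-dependent quantity like $G^{-1}$ ever appears: it takes the span of the \emph{unweighted} incidence vectors $v^{(i)}$ (with $v^{(i)}_e=1$ iff $i\in e$), i.e., your $\phi_f$ with the weights $\ell_{ij}$ removed. The orthogonal projection $P$ onto this span has explicit entries depending only on $|e\cap e'|$ ($\frac{2}{n-1}$, $\frac{n-3}{(n-1)(n-2)}$, $\frac{-1}{\binom{n-1}{2}}$), so $\mathrm{Tr}(L^{-}P)$ splits into a diagonal contribution $\binom n2\cdot\frac{2}{n-1}\cdot 2=2n$ plus a sum over the $\binom n3$ triangles, where for each triangle the three relevant entries of $L^{-}$ are the cosines of its angles, whose sum is at least $1$ (with equality exactly for flat triangles --- matching the tightness you would have to fight in your formulation). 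This gives $\binom n3\cdot\frac{2(n-3)}{(n-1)(n-2)}=\frac{n^2}{3}-n$ for the off-diagonal part and the lemma follows at once. So the fix for your approach is simple to state: drop the $\ell_{ij}$ weights from your test subspace; the weighted choice is natural (it consists of true eigenvectors in the spherical case of Proposition~\ref{prop:large_eigenvalues_balanced_p}), but it entangles the test space with $p$ and converts an elementary triangle inequality into a sharp, unresolved matrix inequality.
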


\upperbound*
\begin{proof}[Proof of Theorem \ref{cor:upperbound}]

Let $p:V\to\Rea^d$ be injective. We compute the trace of $L(K_n,p)$
in two ways. By Lemma \ref{lemma:down_laplacian}, all the diagonal entries of $L^{-}(K_n,p)$ equal $2$, since $p$ is injective. Thus, we have
\begin{equation}\label{eq:trace_direct}
\mathrm{Tr}(L(K_n,p))=\mathrm{Tr}(L^{-}(K_n,p)) = 2\binom n2=n^2-n\,.
\end{equation}

On the other hand, let $\lambda=\lambda_{\binom{d+1}{2}+1}(L(K_n,p))$. We deduce from Lemma \ref{lem:one_third} that
\begin{align}\nonumber
\mathrm{Tr}(L(K_n,p))=&~\sum_{j=1}^{dn}\lambda_j(L(K_n,p)) 
\\\nonumber =&
\sum_{j=\binom{d+1}{2}+1}^{(d-1)n}\lambda_j(L(K_n,p)) +\sum_{j=(d-1)n+1}^{dn}\lambda_j(L(K_n,p)) 
\\ \ge&~ \left((d-1)n-\binom{d+1}2\right)\lambda+\frac {n^2}3+n.
\label{eq:trace_eigs}
\end{align}
By combining \eqref{eq:trace_direct} and \eqref{eq:trace_eigs}, we derive that
\[
\lambda \le  \frac{2n^2/3-2n}{(d-1)n-\binom{d+1}2}.
 \]
Finally, we have
\begin{align*}
    & \frac{2n^2/3-2n}{(d-1)n-\binom{d+1}2}-\frac{2n}{3(d-1)}
    =\frac{n(d(d+1)-6(d-1))}{3(d-1)\left((d-1)n-\binom{d+1}{2}\right)} 
    \\
    &\leq \frac{n(d(d+1)-6(d-1))}{3n(d-1)^2}
    =\frac{d(d+1)-6(d-1)}{3(d-1)^2}
 \leq \frac{1}{3}.
\end{align*}
Therefore, we obtain
\[
\lambda \le \frac{2n}{3(d-1)}+\frac{1}{3}.
\]
Thus, by Lemma \ref{lemma:a_d_for_embeddings}, we obtain
\[
a_d(K_n) \le \frac{2n}{3(d-1)}+\frac{1}{3},
\]
as claimed.
\end{proof}

To prove Lemma \ref{lem:one_third} we will need the following theorem due to Ky Fan:

\begin{theorem}[Ky Fan {\cite[Thm. 1]{fan1949theorem}}]\label{thm:fan}
Let $A\in\Rea^{m\times m} $ be a symmetric matrix, and let $\mu_1\geq \mu_2\geq \cdots\geq \mu_m$ be its eigenvalues. Then, for every $k\leq m$,
\[
\sum_{i=1}^k \mu_i = \max\left\{ \text{Tr}(AP):\, P\in\mathcal{P}_k\right\}
\]
where $\mathcal{P}_k$ consists of all orthogonal projection matrices into $k$-dimensional subspaces of $\Rea^m$.
\end{theorem}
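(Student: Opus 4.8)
The plan is to apply Ky Fan's theorem (Theorem \ref{thm:fan}) to the lower stiffness matrix $L^-=L^-(K_n,p)$, whose nonzero spectrum coincides (with multiplicities) with that of $L(K_n,p)$. Since each matrix is obtained from this common positive spectrum by padding with zeros, the sum of the $n$ largest eigenvalues of $L(K_n,p)$ equals the sum of the $n$ largest eigenvalues of $L^-$, which by Theorem \ref{thm:fan} equals $\max\{\mathrm{Tr}(L^-P):P\in\mathcal P_n\}$. Hence it suffices to produce a single $n$-dimensional subspace $U\subseteq\Rea^E$ with $\mathrm{Tr}(L^-P_U)\ge \tfrac{n^2}3+n$, where $P_U$ is the orthogonal projection onto $U$.

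Guided by Proposition \ref{prop:large_eigenvalues_balanced_p}, in which the large eigenvalues $n$ and $n/2$ of balanced spherical embeddings are realized exactly by the vectors $\phi_f(\{i,j\})=(f_i+f_j)\ell_{ij}$ with $\ell_{ij}=\|p(i)-p(j)\|$, I would take $U=\{\phi_f:f\in\Rea^n\}$, the image of the injective map $\Psi\colon f\mapsto\phi_f$. This $U$ is $n$-dimensional and, crucially, depends only on the edge lengths $\ell_{ij}$, so it is translation-invariant and I may assume $\sum_i p(i)=0$. Writing $A=\Psi^t\Psi$ and $B=\Psi^tL^-\Psi$, we have $\mathrm{Tr}(L^-P_U)=\mathrm{Tr}(A^{-1}B)$. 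Both matrices can be made fully explicit: using Lemma \ref{lemma:down_laplacian} together with $\phi_f^tL^-\phi_f=\|R\phi_f\|^2$ (where $R=R(K_n,p)$), one computes $(R\phi_f)_k=(nf_k+\mathbf 1^tf)\,p(k)-\sum_a f_a\,p(a)$, and thereby obtains, with $G_{ab}=p(a)\cdot p(b)$, $\mathbf r=\operatorname{diag}(G)$ and $\Sigma_2=\mathrm{Tr}(G)$, the closed forms $A=\mathbf r\mathbf 1^t+\mathbf 1\mathbf r^t-2G+\operatorname{diag}(n\mathbf r+\Sigma_2\mathbf 1)$ and $B=nA+nG+\Sigma_2(J-nI)$.

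The useful feature is the relation $B=nA+nG+\Sigma_2(J-nI)$, which gives the exact formula
\[
\mathrm{Tr}(L^-P_U)=n^2+n\,\mathrm{Tr}(A^{-1}G)+\Sigma_2\big(\mathbf 1^tA^{-1}\mathbf 1-n\,\mathrm{Tr}(A^{-1})\big).
\]
As a sanity check, for an isotropic balanced spherical configuration the right-hand side evaluates to $n(n+1)/2$ (all three traces decouple because $\mathbf 1$ is an eigenvector of $A$ and $G\mathbf 1=0$), comfortably above the target $\tfrac{n^2}3+n$. The remaining task is to show that the correction term $n\,\mathrm{Tr}(A^{-1}G)+\Sigma_2(\mathbf 1^tA^{-1}\mathbf 1-n\,\mathrm{Tr}(A^{-1}))$ is at least $n-\tfrac{2n^2}3$ for every injective $p$.

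The main obstacle is exactly this last inequality: controlling $A^{-1}$ for an arbitrary, possibly near-degenerate configuration, where $\mathrm{Tr}(A^{-1})$ can blow up as the minimal edge length tends to $0$, so the negative contribution $-n\Sigma_2\,\mathrm{Tr}(A^{-1})$ must be absorbed by the positive $n\,\mathrm{Tr}(A^{-1}G)$. My plan to handle it is to exploit the structure $A=D_0+E$, where $D_0=\operatorname{diag}(n\mathbf r+\Sigma_2\mathbf 1)$ is positive definite and $E=\mathbf r\mathbf 1^t+\mathbf 1\mathbf r^t-2G$ has rank at most $d+2$; applying the Sherman–Morrison–Woodbury formula reduces $A^{-1}$, and hence all three traces above, to expressions living on the $(d+2)$-dimensional correction space together with the diagonal $D_0$. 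I expect the divergent parts of $\mathrm{Tr}(A^{-1})$ and $\mathrm{Tr}(A^{-1}G)$ to cancel in this reduction, leaving the clean bound $\tfrac{n^2}3+n$; verifying this cancellation and the resulting inequality for all injective $p$ is the crux of the argument.
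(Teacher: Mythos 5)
The statement you were asked to prove is Ky Fan's theorem itself, and your proposal never proves it: its opening sentence \emph{applies} Theorem \ref{thm:fan}, and everything that follows is a sketch of Lemma \ref{lem:one_third}. The paper likewise gives no proof of Theorem \ref{thm:fan} (it is quoted from Fan's 1949 paper), so the only acceptable blind proof here would be a self-contained argument for the trace-maximum identity, which is short: write $A=\sum_{j=1}^m \mu_j u_j u_j^t$ with $\{u_j\}$ an orthonormal eigenbasis; for any orthogonal projection $P$ of rank $k$, set $t_j=u_j^tPu_j=\|Pu_j\|^2$, so that $\mathrm{Tr}(AP)=\sum_{j=1}^m \mu_j t_j$ with $0\le t_j\le 1$ and $\sum_{j=1}^m t_j=\mathrm{Tr}(P)=k$; maximizing this linear functional over $\{t\in[0,1]^m:\sum_j t_j=k\}$ gives $\mathrm{Tr}(AP)\le \mu_1+\cdots+\mu_k$, with equality for $P$ the projection onto $\mathrm{span}(u_1,\dots,u_k)$. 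Nothing in your text supplies, or substitutes for, this argument.

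Even read charitably as an attempt at Lemma \ref{lem:one_third}, the proposal has a genuine gap at exactly the point you flag yourself: the inequality $n\,\mathrm{Tr}(A^{-1}G)+\Sigma_2\bigl(\mathbf{1}^tA^{-1}\mathbf{1}-n\,\mathrm{Tr}(A^{-1})\bigr)\ge n-\tfrac{2n^2}{3}$ is called ``the crux'' but never proved, and no uniform control of $A^{-1}$ is given as two points of $p$ approach each other, where by your own account $\mathrm{Tr}(A^{-1})$ blows up; a hoped-for Sherman--Morrison--Woodbury cancellation is a plan, not a proof. The paper's actual proof of Lemma \ref{lem:one_third} sidesteps all of this by choosing a subspace that does not depend on $p$: the span of the unweighted star vectors $v^{(i)}$, whose projection $P$ has explicit constant entries, so that $\mathrm{Tr}(L^-P)$ splits into a diagonal contribution $2n$ plus a sum over the $\binom{n}{3}$ triangles, each bounded below using the elementary fact that the cosines of the angles of a (possibly flat) triangle sum to at least $1$; this yields $\tfrac{n^2}{3}+n$ with no matrix inversion and no degeneracy analysis. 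Your length-weighted subspace $U=\{\phi_f\}$ is a sensible guess (it contains the top eigenvector $\phi(\{i,j\})=\ell_{ij}$, and in the balanced spherical case evaluates to $n(n+1)/2$, consistent with Conjecture \ref{conj:large_eigenvalues}), but until the $A^{-1}$ estimate is closed it establishes nothing, and in any case it answers a different question than the one posed.
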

\begin{proof}[Proof of Lemma \ref{lem:one_third}]
Let $E=\binom{[n]}{2}$. For $i\in[n]$, let  $v^{(i)}\in \Rea^E$ be defined by
\[
v^{(i)}_e=\begin{cases}
    1 & \text{ if } i\in e,\\
    0 & \text{ otherwise.}
\end{cases}
\]
We claim that the 
$E\times E$ symmetric matrix $P$ defined by
\[
P_{e,e'} =\left\{\begin{matrix}
\frac{2}{n-1}&e=e'\\[6pt]
\frac{n-3}{(n-1)(n-2)}&|e\cap e'|=1\\[6pt]
\frac{-1}{\binom {n-1}2}&e\cap e'=\emptyset
\end{matrix}\right.\,,
\]
is the orthogonal projection on the subspace spanned by $\{v^{(i)} :\, i\in[n]\}$. Indeed, if $e=\{i,j\}$, then the $e$-th row of $P$ is equal to 
\[
P_{e,\cdot} = \frac{1}{n-1}\left(v^{(i)}+v^{(j)}-\frac{1}{n-2}\sum_{k\ne i,j}v^{(k)}\right)\,.
\]
Therefore, $Pw=0$ for every $w$ that is orthogonal to all the vectors $v^{(i)},i\in[n]$. In addition, a straightforward computation shows that $Pv^{(i)}=v^{(i)}$ for every $i\in[n]$ since  $(v^{(i)})^tv^{(j)}=1$ if $i\ne j$ and $\|v^{(i)}\|^2=n-1$.

We apply theorem \ref{thm:fan} for $L^-:=L^-(K_n,p)$
and $P$ to find that
\begin{align*}
\sum_{i=(d-1)n+1}^{dn}\lambda_{i}(L(K_n,p))=
\sum_{i=\binom{n-1}{2}}^{\binom{n}{2}}\lambda_i(L^-) \ge&~ \mathrm{Tr}( L^-P)=\sum_{e,e'}P_{e,e'}L^-_{e,e'}.
\end{align*}
Recall the precise description of $L^-$ in Lemma \ref{lemma:down_laplacian}. Since $p$ is injective, the contribution of the diagonal terms $e=e'$ to the summation is
\begin{equation}
\sum_{e\in E}P_{e,e}L^-_{e,e} = \binom{n}2\cdot \frac{2}{n-1}\cdot 2=2n\,.
\label{eq:diagonal}    
\end{equation}
In addition, since $L^-_{e,e'}=0$ if $e\cap e'=\emptyset$, the contribution of the non-diagonal terms is
\begin{equation}\label{eq:ee'}
\sum_{e\ne e'}P_{e,e'}L^-_{e,e'}=\sum_{i=1}^{n}\sum_{j\ne i}\sum_{j'\ne i,j} \frac{n-3}{(n-1)(n-2)}L^-_{\{i,j\},\{i,j'\}}\,.    
\end{equation}

Note that all the $\binom n3$ triples $\{i,j,j'\}$ of vertices satisfy
\[
L^-_{\{i,j\},\{i,j'\}}+L^-_{\{j,i\},\{j,j'\}}+L^-_{\{j',i\},\{j',j\}}\ge 1.
\]
Indeed, since $p$ is injective, this is the sum of the cosines of the angles in the (possibly flat) triangle $p(i),p(j),p(j')$--- which is bounded from below by $1$. In addition, note that each term $L^-_{\{i,j\},\{i,j'\}}$ appears twice in \eqref{eq:ee'} since $j,j'$ are ordered. Consequently,
\begin{equation}
\sum_{e\ne e'}P_{e,e'}L^-_{e,e'}\ge \binom{n}3 \cdot \frac{2(n-3)}{(n-1)(n-2)}=\frac{n^2}{3}-n\,.
\label{eq:off_diagonal}    
\end{equation}
Joining \eqref{eq:diagonal} and \eqref{eq:off_diagonal}, we obtain
\[
\sum_{i=(d-1)n+1}^{dn}\lambda_{i}(L(K_n,p))\geq 
\frac {n^2}3+n\,.
\]
\end{proof}

We believe that the bound in Lemma \ref{lem:one_third} can be improved:

\begin{conjecture}\label{conj:large_eigenvalues}
Let $p:[n]\to\Rea^d$ be injective. Then, the sum of the $n$ largest eigenvalues of $L(K_n,p)$ is at least $\frac{n(n+1)}{2}$.
\end{conjecture}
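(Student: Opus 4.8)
The plan is to bound the sum of the top $n$ eigenvalues from below by exhibiting a well-chosen $n$-dimensional subspace and invoking Ky Fan's theorem (Theorem \ref{thm:fan}), rather than estimating each eigenvalue. Since the non-zero spectra of $L(K_n,p)$ and $L^-=L^-(K_n,p)$ agree, I would work with $L^-$ on $\Rea^E$, $E=\binom{[n]}{2}$, and take as my subspace the image of the length-weighted incidence map $\Phi\colon\Rea^n\to\Rea^E$, $(\Phi f)_{\{i,j\}}=(f_i+f_j)\,\ell_{ij}$ with $\ell_{ij}=\|p(i)-p(j)\|$; call it $W=\operatorname{Im}\Phi$. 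This subspace is natural because at the symmetric embeddings it is exactly the top-$n$ eigenspace: the constant vector $f\equiv 1$ gives a multiple of the vector $\phi$ with $\phi(\{i,j\})=\ell_{ij}$, which is the eigenvector of eigenvalue $n$ (Lemma \ref{lemma:n_eigenvalue}), while the vectors $\phi_f$ with $\sum_i f_i=0$ are precisely the $(n-1)$-dimensional eigenspace of $n/2$ produced in Proposition \ref{prop:large_eigenvalues_balanced_p}. Thus $\operatorname{Tr}(L^-P_W)=\tfrac{n(n+1)}{2}$ at the simplex and crosspolytope configurations of Theorems \ref{thm:simplex_repeated} and \ref{thm:crosspolytope_spectrum}, which is exactly the target.

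By Ky Fan, $\sum_{i=1}^n\mu_i(L^-)\ge\operatorname{Tr}(L^-P_W)=\operatorname{Tr}\!\big(M N^{-1}\big)$, where $N=\Phi^t\Phi$ and $M=\Phi^tL^-\Phi$ are $n\times n$ matrices, so the task reduces to a finite-dimensional inequality. Normalizing $\sum_i p(i)=0$ (which changes neither $L^-$ nor $\Phi$, as both depend only on the $\ell_{ij}$ and the directions $d_{ij}$), one computes $N_{ij}=\ell_{ij}^2$ for $i\ne j$ and $N_{ii}=\sum_{k\ne i}\ell_{ik}^2$, i.e. $N=C+\operatorname{Diag}(C\mathbf 1)$ for the squared-distance matrix $C_{ij}=\ell_{ij}^2$; and, using $L^-=R^tR$ together with $\ell_{ij}d_{ij}=p(i)-p(j)$, one obtains the closed form $f^tMg=\sum_{i}b_i(f)\cdot b_i(g)$ with $b_i(g)=\sum_{j\ne i}(g_i+g_j)\big(p(i)-p(j)\big)\in\Rea^d$. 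The goal then becomes the trace inequality $\operatorname{Tr}(MN^{-1})\ge\tfrac{n(n+1)}{2}$, which holds with equality at the symmetric configurations.

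The hard part will be this last inequality — and, more honestly, deciding whether the length-weighted subspace is adequate at all. A first-order computation shows that at a symmetric embedding $W$ coincides with an eigenspace of $L^-$, so the map $p\mapsto\operatorname{Tr}(L^-P_{W(p)})$ is stationary there and agrees to first order with $p\mapsto\sum_{i=1}^n\mu_i$; but since $\operatorname{Tr}(L^-P_W)\le\sum_{i=1}^n\mu_i$ always, the symmetric point could be a saddle or a maximum of the former, in which case $\operatorname{Tr}(MN^{-1})$ may dip below $\tfrac{n(n+1)}{2}$ even while the true eigenvalue sum does not. I therefore expect that the robust route is a direct variational analysis of $F(p):=\sum_{i=1}^n\mu_i(L^-(K_n,p))$, a translation- and scale-invariant spectral function (convex as a function of $L^-$): one would classify its critical configurations, evaluate $F$ at the highly symmetric ones — where Theorems \ref{thm:simplex_repeated} and \ref{thm:crosspolytope_spectrum} give $F=\tfrac{n(n+1)}{2}$ — verify these are minima, and control the non-compact boundary where points collide; these degenerations are exactly the Tur\'an-type limits, so the infimum should be attained there. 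Establishing that the symmetric embeddings globally minimize the top-$n$ eigenvalue sum is the crux, and the trace reduction above is best viewed as supplying the extremizers and the equality case rather than a complete proof.
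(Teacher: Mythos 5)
The statement you were asked about is a \emph{conjecture} in the paper: the authors do not prove it. The closest thing the paper contains is Lemma \ref{lem:one_third}, which establishes the weaker bound $n^2/3+n$ by exactly the mechanism you propose --- Ky Fan's theorem applied to $L^-(K_n,p)$ with an explicit rank-$n$ projection --- except that the paper projects onto the span of the \emph{unweighted} vertex-star vectors $v^{(i)}$ (so $P$ has constant entries depending only on $|e\cap e'|$), and the resulting trace is controlled by the elementary geometric fact that the sum of cosines of the angles of any triangle is at least $1$. That choice sacrifices sharpness (it cannot see the lengths $\ell_{ij}$, hence the loss from $n(n+1)/2$ down to $n^2/3+n$) in exchange for an inequality that can actually be verified pointwise, triangle by triangle. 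Your length-weighted subspace $W=\operatorname{Im}\Phi$ is the natural candidate for removing that loss, and your observation that $W$ is precisely the top-$n$ eigenspace at the configurations of Proposition \ref{prop:large_eigenvalues_balanced_p} (so that $\operatorname{Tr}(L^-P_W)=\tfrac{n(n+1)}{2}$ there, matching the simplex value $(d+1)(d+2)/2$ from Theorem \ref{thm:simplex_spectrum}) correctly identifies the expected extremizers and equality cases.

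But there is a genuine gap, which you yourself flag: the entire proof reduces to the inequality $\operatorname{Tr}(MN^{-1})\ge\tfrac{n(n+1)}{2}$ for arbitrary injective $p$, and nothing in your argument establishes it. Since $\operatorname{Tr}(L^-P_W)$ only \emph{lower}-bounds the eigenvalue sum, the symmetric configurations being stationary points of the true functional does not prevent the surrogate $\operatorname{Tr}(L^-P_W)$ from dipping below $\tfrac{n(n+1)}{2}$ at nearby or degenerate configurations, in which case the method fails even though the conjecture may hold; you would need at minimum a second-order analysis at the symmetric points and a separate argument far from them. Your fallback --- a global variational analysis of $F(p)=\sum_{i=1}^n\mu_i(L^-(K_n,p))$, classifying critical configurations and controlling the non-compact collision boundary --- is a research program, not a proof: none of its steps (criticality classification, minimality verification, compactness of the degeneration analysis) is carried out, and each is plausibly as hard as the conjecture itself. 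So the proposal should be read as a sharpening of the paper's Lemma \ref{lem:one_third} strategy together with a correct identification of where equality should hold, not as a proof; the statement remains open, consistent with its status in the paper.
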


Recall that, by Lemma \ref{lemma:n_eigenvalue}, for every non-constant $p:[n]\to \Rea^d$, the largest eigenvalue of $L(K_n,p)$ is $n$. Thus, Conjecture \ref{conj:large_eigenvalues} is equivalent to saying that the average of the next $n-1$ largest eigenvalues is at least $n/2$.

\section{Concluding remarks}\label{sec:conclude}

\subsection{Complete graphs}

We conjecture that the lower bound of Theorem \ref{thm:lower_bound} is essentially tight: 
\begin{conjecture}
Let $n\geq 2d$. Then, 
\[
\lfrac{n}{2d}\leq a_d(K_n)\leq \frac{n}{2d}.
\]
\end{conjecture}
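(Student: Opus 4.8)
The plan is to treat the two inequalities separately; the lower bound should be within reach of the machinery already developed, while the upper bound is the genuine obstacle.

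For the lower bound $\lfloor n/(2d)\rfloor \leq a_d(K_n)$, I would improve the construction behind Theorem \ref{thm:lower_bound}. Write $n = 2dq + s$ with $0 \leq s < 2d$, so that $\lfloor n/(2d)\rfloor = q$. Theorem \ref{thm:crosspolytope_spectrum} already gives $a_d(K_{2dq}) \geq q$, so it suffices to show that adjoining $s$ further vertices does not drop the spectral gap below $q$. The cleanest route would be a monotonicity statement $a_d(K_{m+1}) \geq a_d(K_m)$: starting from a near-optimal embedding $p$ of $K_m$, extend it to $K_{m+1}$ by placing one additional point, and use Theorem \ref{thm:edge_removal_interlacing} to argue that adding the $m$ new edges can only raise each eigenvalue of the stiffness matrix at a fixed index. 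The difficulty is that passing from $K_m$ to $K_{m+1}$ simultaneously enlarges the ambient matrix from $dm$ to $d(m+1)$, so the relevant index $\binom{d+1}{2}+1$ is not preserved and the new zero modes must be tracked carefully. An alternative is a direct computation: place the $s$ extra vertices into the crosspolytope clusters so that each antipodal pair of clusters remains balanced in size (forcing $s$ even), and redo the eigenvector analysis of Proposition \ref{prop:large_eigenvalues_balanced_p} and Lemma \ref{lemma:condition_for_eigenvector} for unequal cluster sizes. Here the obstacle is that the clean eigenvalue $n/(2d)$ splits once the clusters are unbalanced, and one must verify that the smallest surviving nontrivial eigenvalue is still at least $q$.

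For the upper bound $a_d(K_n) \leq n/(2d)$, I would follow the trace-versus-Ky-Fan scheme of Theorem \ref{cor:upperbound}. Writing $r = dn - \binom{d+1}{2}$ for the rank and $\mu_1 \geq \cdots \geq \mu_r > 0$ for the positive eigenvalues of $L^{-}(K_n,p)$, the spectral gap is $\mu_r$, and since $\mathrm{Tr}(L^{-}) = n(n-1)$ is fixed, it suffices to produce a lower bound of the form: the sum of the top $t$ eigenvalues is at least $n(n-1) - (r-t)\,n/(2d)$ for some $t < r$. Averaging the remaining $r-t$ eigenvalues then forces $\mu_r \leq n/(2d)$. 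Such a partial-sum bound can in principle be obtained from Theorem \ref{thm:fan} by exhibiting, for every embedding $p$, an explicit $t$-dimensional subspace on which $\mathrm{Tr}(L^{-}P)$ is large, in analogy with the projection onto $\mathrm{span}\{v^{(i)}\}$ used in Lemma \ref{lem:one_third}.

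The main obstacle is precisely the strength of this partial-sum estimate. The bound of Lemma \ref{lem:one_third} (top $n$ eigenvalues $\geq n^2/3 + n$) only yields $a_d(K_n) \leq 2n/(3(d-1)) + 1/3$, and even the stronger Conjecture \ref{conj:large_eigenvalues} (top $n$ eigenvalues $\geq n(n+1)/2$) would give at best roughly $n/(2(d-1))$, still short of the target $n/(2d)$. A naive trace count cannot suffice: the average of the positive eigenvalues is $n(n-1)/r \approx n/d$, so having all $r$ of them exceed $n/(2d)$ is entirely consistent with $\mathrm{Tr}(L^{-}) = n(n-1)$. One must therefore genuinely force many eigenvalues to be large -- for instance by constructing, uniformly over $p$, a subspace of dimension close to $r$ on which $L^{-}$ acts with Rayleigh quotients bounded below by roughly $n/(2d)$ -- and then play this against the fixed trace. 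Identifying such a universal near-full-rank ``large-eigenvalue'' subspace, matching the tightness witnessed by the crosspolytope embedding of Theorem \ref{thm:crosspolytope_spectrum}, is where I expect the real work to lie.
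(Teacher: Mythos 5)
The statement you are proving is not a theorem of the paper at all: it appears verbatim as a conjecture in the concluding remarks, and the paper's actual results stop at $\left\lceil n/(2d)\right\rceil-2d+1\leq a_d(K_n)$ (Theorem \ref{thm:lower_bound}) and $a_d(K_n)\leq 2n/(3(d-1))+1/3$ (Theorem \ref{cor:upperbound}). So there is no paper proof to compare against, and your proposal --- candidly framed as a plan --- does not close either inequality. For the lower bound $\lfloor n/(2d)\rfloor\leq a_d(K_n)$, the monotonicity route fails at exactly the point you flag, and it is worth spelling out why interlacing alone cannot patch it: extending a near-optimal embedding of $K_m$ by one new vertex yields, before adding edges, a framework whose stiffness spectrum is that of $L(K_m,p)$ together with $d$ extra zeros, so $\lambda_{\binom{d+1}{2}+1}$ of the extended framework is $0$. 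Theorem \ref{thm:edge_removal_interlacing} then gives only that each of the $m$ added edges weakly raises every eigenvalue \emph{at the same index}, i.e. the useless conclusion $\lambda_{\binom{d+1}{2}+1}(L(K_{m+1},p'))\geq 0$; recovering the $d$ indices absorbed by the new zero modes would require strict one-step index shifts from (at least $d$ of) the rank-one edge updates, uniformly over embeddings, and nothing in the paper supplies that. Note also that the paper's own mechanism for non-divisible $n$ is different and weaker: it deletes up to $2d-1$ vertices from a balanced crosspolytope framework and invokes the Jord\'an--Tanigawa lemma that vertex removal costs at most $1$, which is precisely why it only reaches $\lceil n/(2d)\rceil-2d+1$ rather than $\lfloor n/(2d)\rfloor$. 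Your alternative --- redoing the eigenvector analysis of Proposition \ref{prop:large_eigenvalues_balanced_p} and Lemma \ref{lemma:condition_for_eigenvector} for unbalanced clusters --- is likewise not carried out: with unequal cluster sizes the vertex-balance condition (condition (2) of Lemma \ref{lemma:condition_for_eigenvector}) and the clean local sums both break, the eigenvalue $n/(2d)$ splits, and you give no estimate locating the smallest surviving nontrivial eigenvalue.

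For the upper bound $a_d(K_n)\leq n/(2d)$ your analysis is an accurate diagnosis rather than a proof. You correctly observe that the trace identity $\mathrm{Tr}(L^{-}(K_n,p))=n(n-1)$ is consistent with all $r=dn-\binom{d+1}{2}$ positive eigenvalues being near $n/d$, so no partial-sum estimate of the strength of Lemma \ref{lem:one_third} --- nor even Conjecture \ref{conj:large_eigenvalues}, which as you compute would cap out near $n/(2(d-1))$ --- can force the spectral gap down to $n/(2d)$; what would be needed is, for every embedding $p$, a subspace of dimension $t$ with compression trace at least $n(n-1)-(r-t)\,n/(2d)$, matching the tightness of the crosspolytope framework of Theorem \ref{thm:crosspolytope_spectrum}. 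But you do not construct such a subspace, and no construction in the paper does either. In short: both halves of the statement remain open, your proposal's lower-bound sketches contain identified but unrepaired gaps, and its upper-bound discussion correctly explains why the paper's method cannot work without supplying one that does.
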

 
\subsection{Regular graphs}
From some computer calculations, it seems possible that the following generalization of \cite[Conj. 2]{jordan2020rigidity} holds:
\begin{conjecture}
For $d\geq 1$,
\[
    \lim_{n\to\infty} \max\{ a_d(G) :\, G \text{ is $2d$-regular on $n$ vertices}\}=0.
\]
\end{conjecture}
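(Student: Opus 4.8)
The plan is to reduce the conjecture to the construction of a single good test motion per embedding, and then to exploit the fact that $2d$ is exactly the minimal regularity that permits $d$-rigidity. For $y\in\Rea^{dn}$ with vertex-blocks $y_v\in\Rea^d$ one has, from $L(G,p)=R(G,p)R(G,p)^t$,
\[
\mathcal{E}(y):=y^t L(G,p)\,y=\sum_{\{i,j\}\in E}\bigl(d_{ij}\cdot(y_i-y_j)\bigr)^2 .
\]
The space $\mathcal{T}$ of infinitesimal isometries (translations $y_v=b$ and rotations $y_v=Ap(v)$ with $A$ antisymmetric) lies in $\ker L(G,p)$ for every $p$, and $\dim\mathcal{T}=\binom{d+1}{2}$ when the image of $p$ affinely spans $\Rea^d$. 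Given any nonzero $y^{\ast}\perp\mathcal{T}$, the subspace $W=\mathcal{T}\oplus\mathrm{span}(y^{\ast})$ has dimension $\binom{d+1}{2}+1$, and since $Lt=0$ for $t\in\mathcal{T}$ the Rayleigh quotient on $W$ is maximized at $t=0$; Courant--Fischer then yields
\[
\lambda_{\binom{d+1}{2}+1}\!\left(L(G,p)\right)\le \frac{(y^{\ast})^{t}L(G,p)\,y^{\ast}}{\|y^{\ast}\|^{2}}=\frac{\mathcal{E}(y^{\ast})}{\|y^{\ast}\|^{2}} .
\]
Thus it suffices to exhibit, for every $2d$-regular $G$ and every injective $p$, one motion $y^{\ast}\perp\mathcal{T}$ whose normalized energy tends to $0$ as $n\to\infty$.

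The structural input I would use is that a $2d$-regular graph has exactly $dn$ edges, barely above the rigidity count $dn-\binom{d+1}{2}$, so for a rigid $(G,p)$ the self-stress space $\ker R(G,p)$ has dimension only $\binom{d+1}{2}$: the framework is \emph{almost isostatic}, hence maximally floppy per deleted constraint. Concretely, I would delete a minimal edge set $F$ (a cocircuit of the $d$-rigidity matroid) whose removal drops $\rank R$ below $dn-\binom{d+1}{2}$, producing a genuine nontrivial flex $z$ of $(G\setminus F,p)$ with $\mathcal{E}_{G\setminus F}(z)=0$. Putting the deleted edges back costs only
\[
\mathcal{E}_G(z)=\sum_{e\in F}\bigl(d_{e}\cdot(z_i-z_j)\bigr)^2\le |F|\cdot\max_{e\in F}\|z_i-z_j\|^2 ,
\]
a quantity controlled by few edges. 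A complementary family of candidate motions is the spatially modulated rotations $z_v=\phi(v)\,A\,p(v)$, for which $A$ antisymmetric makes the leading term vanish and leaves
\[
\mathcal{E}(z)=\sum_{\{i,j\}\in E}\frac{\bigl(\phi(i)-\phi(j)\bigr)^2\,\bigl(p(i)^t A\,p(j)\bigr)^2}{\|p(i)-p(j)\|^2} ,
\]
so the energy localizes on the discrete gradient of $\phi$ along edges. In either case the goal is to arrange $\|z\|^2=\Omega(n)$ while the relevant stretches stay bounded, forcing $\mathcal{E}_G(z)/\|z\|^2\to 0$.

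The base case $d=1$ validates this picture and should guide the general construction: the only $2$-regular connected graphs are cycles, $a_1(C_n)$ is the Fiedler value $2(1-\cos(2\pi/n))\to0$, and it is realized by the spread-out Fourier mode $\phi(v)=\cos(2\pi v/n)$, which is precisely a low-frequency scalar field with small discrete gradient. For $d\ge2$ one would hope that near-isostaticity similarly guarantees a spread-out near-flex, and the interlacing bound of Theorem~\ref{thm:edge_removal_interlacing} lets one trade deletions for shifts in the eigenvalue index, so that a flexible $G\setminus F$ with several zero modes pins the gap of $G$ high in the order statistics where the spectrum is still near zero.

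The hard part will be making all of this \emph{uniform over every embedding} $p$, rather than the generic one. Two obstacles arise simultaneously. First, a $2d$-regular graph may be an expander, whose graph Laplacian has spectral gap $\Omega(1)$; then no low-frequency scalar field $\phi$ exists, so the modulated-rotation motions need not have small energy, and there is no combinatorial ``long direction'' to spread a flex along. Second, the graph may be redundantly rigid, so the cogirth $|F|$ is large and, worse, an adversarial placement of $p$ can concentrate the deletion-flex $z$ near the edges of $F$, inflating $\max_{e\in F}\|z_i-z_j\|$ and defeating the $O(1)/\Omega(n)$ heuristic. Overcoming both seems to require either a robust rigidity-matroid statement producing \emph{many edge-disjoint nearly-flexible regions} in an arbitrary rigid $2d$-regular framework—so that an averaging argument over the associated localized test-motions forces some normalized energy to $0$—or, equivalently, a direct quantitative proof that the normalized rigidity matrix $R(G,p)$ must carry a singular value of size $o(1)$ beyond its trivial $\binom{d+1}{2}$-dimensional kernel, for \emph{every} embedding. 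I expect this uniform control of the near-flex geometry to be the crux that currently keeps the conjecture open.
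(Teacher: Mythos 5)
There is no paper proof to compare you against: the statement you were given is one of the open conjectures in the concluding remarks (Section~\ref{sec:conclude}), which the authors support only by the remark that ``from some computer calculations, it seems possible'' that it holds, as a generalization of a conjecture of Jord\'an and Tanigawa. Your submission is, by its own admission, a research program rather than a proof --- your final sentence concedes that the uniform control of the near-flex geometry ``currently keeps the conjecture open.'' The ingredients you do verify are sound: the energy identity $y^tL(G,p)y=\sum_{\{i,j\}\in E}\bigl(d_{ij}\cdot(y_i-y_j)\bigr)^2$; the Courant--Fischer test-vector bound via $W=\mathcal{T}\oplus\mathrm{span}(y^{\ast})$ with $y^{\ast}\perp\mathcal{T}$ (the cross terms vanish because $L$ is positive semi-definite with $\mathcal{T}\subseteq\ker L$); the computation $(p(i)-p(j))^t\bigl(\phi(i)Ap(i)-\phi(j)Ap(j)\bigr)=(\phi(i)-\phi(j))\,p(i)^tA\,p(j)$ for antisymmetric $A$; and the $d=1$ base case, where $2$-regular graphs are disjoint unions of cycles --- which are never expanders, which is precisely why that case is easy and not representative.

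The genuine gap is exactly where you locate it, and it is worth being precise about why neither of your two constructions closes it. The cocircuit-deletion route fails quantitatively: the matroid count guarantees a nontrivial flex $z$ of $(G\setminus F,p)$, but gives no lower bound on $\|z\|^2$ relative to $\max_{e\in F}\|z_i-z_j\|^2$; an adversarial embedding $p$ (or a redundantly rigid $G$ with large cogirth) can force every nontrivial flex of $G\setminus F$ to localize on $O(1)$ vertices incident to $F$, so the heuristic $\mathcal{E}_G(z)=O(1)$ with $\|z\|^2=\Omega(n)$ is unsubstantiated --- no lemma in the rigidity-matroid toolbox produces a spread-out flex. The modulated-rotation route fails combinatorially: if $G$ is an expander (as typical $2d$-regular graphs are for $d\ge2$), every $\phi$ orthogonal to constants satisfies $\sum_{\{i,j\}\in E}(\phi(i)-\phi(j))^2\ge c\,\|\phi\|^2$, and you supply no mechanism forcing the geometric weights $\bigl(p(i)^tA\,p(j)\bigr)^2/\|p(i)-p(j)\|^2$ to be small on most edges for some antisymmetric $A$, uniformly over all embeddings. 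What is missing is precisely the uniform quantitative statement you name at the end --- that for every $2d$-regular $G$ and every $p$ the $\bigl(\binom{d+1}{2}+1\bigr)$-st smallest singular value of $R(G,p)$ is $o(1)$ --- and that statement \emph{is} the conjecture, not a step toward it. Your text is a fair map of the obstruction and consistent with the paper's status of the problem, but it should be presented as a discussion of difficulty, not as a proof.
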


\subsection{Repeated points}
In line with our analysis of the minimal nontrivial eigenvalue of $L(G,p)$, where $G$ is the Tur{\'a}n graph $T(n,d+1)$ (resp. $T(n,2d)$) and $p=q^{\triangle}$ (resp. $p=q^{\diamond}$) in Theorem~\ref{thm:simplex_repeated} and Theorem~\ref{thm:crosspolytope_spectrum}, we conjecture the following general phenomena regarding the effect of repeated points on the spectral gap.

For an injective $p: [n] \to \Rea^d$ and graph $G$ with vertex set $[n]$, denote by $\lambda(G,p)=\lambda_{\binom{d+1}{2}+1}(L(G,p))$.  
For $k\ge 1$ let $p^{k}: [kn] \to \Rea^d $ be a mapping such that $|(p^{k})^{-1}(p(v))|=k$ for every $v\in [n]$. In words, we put $k$ vertices on each point of the image of $p$.

\begin{conjecture}
For every injective mapping $p: [n] \to \Rea^d$ and every $k\ge 2$, 
\[\lambda(K_{kn},p^{k})= \frac{k}{2}\lambda(K_{2n},p^{2}).
\]
\end{conjecture}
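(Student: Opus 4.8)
The plan is to exploit the $S_k^{\,n}$ symmetry of the blown-up framework $(K_{kn},p^k)$ to block-diagonalize its lower stiffness matrix, and thereby show that the entire \emph{nonzero} spectrum of $L^{-}(K_{kn},p^k)$ is exactly $k$ times a multiset of values independent of $k$. Write the vertex set of $K_{kn}$ as a disjoint union of $n$ blocks $W_1,\dots,W_n$ of size $k$, where $p^k$ is constant (equal to $q_a:=p(a)$) on $W_a$. Edges inside a block have length $0$, so by Lemma~\ref{lemma:down_laplacian} their rows and columns in $L^{-}(K_{kn},p^k)$ vanish; they contribute only to the kernel and may be ignored. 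The remaining ``real'' edge space is $\bigoplus_{a<b}\Rea^{W_a}\otimes\Rea^{W_b}$, and since $L^{-}$ commutes with the block-wise $S_k^{\,n}$-action, it preserves the decomposition obtained by splitting each $\Rea^{W_a}=T_a\oplus S_a$ into the constant line $T_a$ and the sum-zero space $S_a$ (of dimension $k-1$). Because each edge meets only two blocks, a component is standard (lies in some $S_a$) in at most two slots, giving three types: ``$TT$'' (constant on every bundle $E(W_a,W_b)$), ``single-standard'' (sum-zero in one block $a$, constant in the rest), and ``double-standard'' (sum-zero in both endpoints' blocks).

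Next I would compute the action of $L^{-}$ on each type via Lemma~\ref{lemma:down_laplacian}, the point being that the sum-zero conditions force all ``leakage'' terms to vanish (cf.\ the mechanism in Lemma~\ref{lemma:condition_for_eigenvector} and the Tur\'an computations). For a $TT$-vector $\phi_{\{u,v\}}=\Phi_{ab}$ ($u\in W_a,v\in W_b$), counting shared-edge contributions shows $L^{-}\phi$ is again a $TT$-vector whose block values are $k$ times those of $L^{-}(K_n,p)\,\Phi$; hence this block is similar to $k\,L^{-}(K_n,p)$ and contributes eigenvalues $k\nu$, $\nu$ ranging over the spectrum of $L^{-}(K_n,p)$. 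For a single-standard vector concentrated on block $a$, of the form $\phi_{\{u,v\}}=f(u)\,h_b$ with $f\in S_a$ (and $\phi\equiv0$ on edges missing $W_a$), the same count collapses to $L^{-}\phi=k\,f(u)\,(C^{(a)}h)_b$, where $C^{(a)}$ is the $(n-1)\times(n-1)$ Gram matrix of the unit directions $\{d_{ab}\}_{b\ne a}$ (entry $c_a(b,c)=d_{ab}\cdot d_{ac}$); this block is thus similar to $k\,(I_{k-1}\otimes C^{(a)})$ and contributes eigenvalues $k\mu$, $\mu\in\mathrm{spec}(C^{(a)})$, each of multiplicity $k-1$. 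Finally, a double-standard vector $f_a\otimes g_b$ satisfies $L^{-}\phi=0$. A dimension check $\binom n2[1+2(k-1)+(k-1)^2]=\binom n2 k^2$ confirms the decomposition is exhaustive.

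It follows that every nonzero eigenvalue of $L^{-}(K_{kn},p^k)$ equals $k$ times a positive element of the fixed multiset $\mathrm{spec}(L^{-}(K_n,p))\cup\bigcup_{a}\mathrm{spec}(C^{(a)})$, in which the $C^{(a)}$-values genuinely occur as soon as $k\ge2$ (their multiplicity $k-1$ is then positive). Hence the smallest positive eigenvalue of $L^{-}(K_{kn},p^k)$ equals $k\cdot m(p)$, where $m(p)$ is the smallest positive number in that multiset and is \emph{independent of $k$}. To finish I would check that $(K_{kn},p^k)$ is infinitesimally rigid, so that $\lambda(K_{kn},p^k)$ is precisely this smallest nonzero eigenvalue: summing block ranks gives $\rank L^{-}(K_{kn},p^k)=(dn-\binom{d+1}{2})+n(k-1)d=d(kn)-\binom{d+1}{2}$, the maximal value, provided the points $q_1,\dots,q_n$ affinely span $\Rea^d$ (so each $C^{(a)}$ has rank $d$ and $L^{-}(K_n,p)$ has the rank of a rigid complete-graph framework). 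Therefore $\lambda(K_{kn},p^k)=k\,m(p)$ for every $k\ge2$, and taking the ratio with $k=2$ yields $\lambda(K_{kn},p^k)=\tfrac{k}{2}\,\lambda(K_{2n},p^2)$.

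The main obstacle is the bookkeeping of the second paragraph: verifying that the off-bundle (leakage) equations hold and that the within-block action collapses \emph{exactly} to $k\,C^{(a)}$ and $k\,L^{-}(K_n,p)$, with no $k$-dependent lower-order corrections. A secondary point requiring care is the degenerate case in which $q_1,\dots,q_n$ fail to affinely span $\Rea^d$: then the framework is flexible and $\lambda=0$ on both sides, so the identity still holds but the ``smallest nonzero eigenvalue'' reading of $\lambda$ must revert to the literal index $\binom{d+1}{2}+1$; I would present the spanning case as the main statement. As a consistency check, the regular simplex base ($n=d+1$) gives $\mathrm{spec}(C^{(a)})=\{\tfrac12^{(d-1)},\tfrac{d+1}{2}\}$ and $m=\tfrac12$, recovering the gap $\tfrac{n}{2(d+1)}$ of Theorem~\ref{thm:simplex_repeated}, while the cross-polytope base gives $m=1$ and the gap $\tfrac{n}{2d}$ of Theorem~\ref{thm:crosspolytope_spectrum}.
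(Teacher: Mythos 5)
This statement is not proved anywhere in the paper: it is the final conjecture of the concluding section (``Repeated points''), stated without proof, with only the remark that the assertion fails at $k=1$ (witnessed by $p^{\triangle}$). So there is no paper argument to compare against, and the only meaningful task is to audit your attempt on its own merits.

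Your attempt is, as far as I can verify, a correct proof of the conjecture --- indeed of the stronger statement $\lambda(K_{kn},p^{k})=k\,m(p)$ for all $k\ge 2$, with $m(p)$ the least positive element of $\mathrm{spec}(L^{-}(K_n,p))\cup\bigcup_a\mathrm{spec}(C^{(a)})$. The bookkeeping you flagged as the main obstacle does close exactly as you predicted, with no $k$-dependent corrections. For a $TT$-vector the row sum at an edge of $E(W_a,W_b)$ is $\bigl(2+2(k-1)\bigr)\Phi(ab)+k\sum_{c\neq a,b}\bigl((d_{ab}\cdot d_{ac})\Phi(ac)+(d_{ba}\cdot d_{bc})\Phi(bc)\bigr)=k\,(L^{-}(K_n,p)\Phi)(ab)$; for a single-standard vector $f\otimes h$ the diagonal and same-bundle terms collapse as $2+(k-1)-1=k$ (the $-1$ coming from $\sum_{w\neq u}f(w)=-f(u)$), giving $k f(u)(C^{(a)}h)_b$ with $C^{(a)}_{bb}=1$ absorbing the former ``$2$''; for a double-standard vector, $2-1-1=0$; and all leakage rows vanish because the relevant cosines are constant across a block while $f$ (or $g$) sums to zero --- exactly the mechanism of Lemma \ref{lemma:condition_for_eigenvector}. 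Within-block edges have identically zero rows and columns by Lemma \ref{lemma:down_laplacian} (including the $\cos=0$ convention), so discarding them is legitimate, and your dimension count $\binom{n}{2}k^2+n\binom{k}{2}=\binom{kn}{2}$ shows the decomposition is exhaustive. The rank count $(dn-\binom{d+1}{2})+n(k-1)d=dkn-\binom{d+1}{2}$ is also right when $p([n])$ affinely spans $\Rea^d$ (each $C^{(a)}$ then has rank $d$, and $\mathrm{rank}\,L^{-}(K_n,p)=dn-\binom{d+1}{2}$ by the classical Asimow--Roth fact for complete graphs), so the kernel of $L(K_{kn},p^k)$ has dimension exactly $\binom{d+1}{2}$ and $\lambda(K_{kn},p^k)$ really is the least positive eigenvalue, namely $k\,m(p)$; the ratio identity follows, and the failure at $k=1$ is explained by the $C^{(a)}$-eigenvalues having multiplicity $k-1=0$ there. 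Your consistency checks are also correct: for the simplex base, $\mathrm{spec}(C^{(a)})=\{\tfrac12{}^{(d-1)},\tfrac{d+1}{2}{}^{(1)}\}$ reproduces Theorem \ref{thm:simplex_repeated} including all multiplicities, and for the cross-polytope base, $\mathrm{spec}(C^{(a)})=\{0^{(d-1)},1^{(d-1)},d^{(1)}\}$ reproduces Theorem \ref{thm:crosspolytope_spectrum}. Two minor points to tighten in a write-up: in the non-spanning case, the ``both sides vanish'' reading requires $kn\ge d+1$ and $2n\ge d+1$ so that the index $\binom{d+1}{2}+1$ is meaningful and the kernel strictly exceeds $\binom{d+1}{2}$ (for very small $kn$ relative to $d$ the quantity $\lambda$ is ill-defined --- an artifact of the definition, not of your argument), so presenting the spanning case as the main statement, as you propose, is the right call; and the invariance of the four blocks should be justified by your direct computations rather than by the $S_k^{\,n}$-symmetry alone, since symmetry by itself only separates isotypic types, not the individual bundles --- but your computations do supply this. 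In short: the paper leaves this open, and your outline, once the above verifications are written out, settles it.
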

We remark that for $k=1$ the assertion fails, as demonstrated by the regular simplex embedding $p^{\triangle}$.

\bibliographystyle{abbrv}
\bibliography{biblio}

\end{document}